\newtheorem{theorem}{Theorem}
\newtheorem{corollary}[theorem]{Corollary}
\newtheorem{condition}[theorem]{Condition}
\newtheorem{lemma}[theorem]{Lemma}
\newtheorem{proposition}[theorem]{Proposition}
\newtheorem{remark}[theorem]{Remark}
\newtheorem{definition}[theorem]{Definition}
\numberwithin{theorem}{section}
\numberwithin{figure}{section}
\numberwithin{equation}{section}
\DeclareMathOperator{\CR}{CR}
\DeclareMathOperator{\cov}{cov}
\DeclareMathOperator{\dist}{dist}
\DeclareMathOperator{\SLE}{SLE}
\DeclareMathOperator{\GFF}{GFF}
\begin{document}

\title{Level Lines of the Gaussian Free Field with general boundary data}
\author{Ellen Powell and Hao Wu}


%
%
\maketitle

\abstract{We study the level lines of a Gaussian free field in a planar domain with general boundary data $F$. We show that the level lines exist as continuous curves under the assumption that $F$ is regulated (i.e., admits finite left and right limits at every point), and satisfies certain inequalities. Moreover, these level lines are a.s. determined by the field. This allows us to define and study a generalization of the SLE$_4(\underline{\rho})$ process, now with a continuum of force points. A crucial ingredient is a monotonicity property in terms of the boundary data which strengthens a result of Miller and Sheffield and is also of independent interest. }
\newcommand{\eps}{\epsilon}
\newcommand{\ov}{\overline}
\newcommand{\U}{\mathbb{U}}
\newcommand{\T}{\mathbb{T}}
\newcommand{\HH}{\mathbb{H}}
\newcommand{\LA}{\mathcal{A}}
\newcommand{\LC}{\mathcal{C}}
\newcommand{\LD}{\mathcal{D}}
\newcommand{\LF}{\mathcal{F}}
\newcommand{\LK}{\mathcal{K}}
\newcommand{\LE}{\mathcal{E}}
\newcommand{\LL}{\mathcal{L}}
\newcommand{\LU}{\mathcal{U}}
\newcommand{\LV}{\mathcal{V}}
\newcommand{\LZ}{\mathcal{Z}}
\newcommand{\R}{\mathbb{R}}
\newcommand{\C}{\mathbb{C}}
\newcommand{\N}{\mathbb{N}}
\newcommand{\Z}{\mathbb{Z}}
\newcommand{\E}{\mathbb{E}}
\newcommand{\PP}{\mathbb{P}}
\newcommand{\QQ}{\mathbb{Q}}
\newcommand{\A}{\mathbb{A}}
\newcommand{\bn}{\mathbf{n}}
\newcommand{\MR}{MR}
\newcommand{\cond}{\,|\,}
\newcommand{\la}{\langle}
\newcommand{\ra}{\rangle}
\newcommand{\tree}{\Upsilon}

\section{Introduction}\label{sec::intro}
The relationship between Schramm--Loewner Evolution (SLE) and the two-dimensional Gaussian free field (GFF) is at the heart of recent breakthroughs in Liouville quantum gravity, imaginary geometry and more generally, random conformal geometry. Starting with the seminal papers of  \cite{dub}, \cite{ss}, \cite{ss09}, one key idea is to make sense of SLE-type curves as a level lines of an underlying Gaussian free field $h$ in a domain, which we take to be the upper half plane $\HH$ without loss of generality in the rest of the paper. When the field $h$ is given the boundary values $\lambda:=\pi/2$ on $\R_+$ and $-\lambda$ on $\R_-$, the corresponding level line is a chordal SLE$_4$ curve.
A considerable extension of that theory is described in \cite{msig1}, which introduced the notion of \emph{flow lines} and \emph{counter flow lines} of the GFF. In this case it turns out that the curves are given by $\SLE_\kappa$ processes with $\kappa\in (0,4)$ and $\kappa\in (4,\infty)$ respectively.  

It is also natural to wonder for which sort of boundary data the notion of level line makes sense. In \cite{msig1} and \cite{wwll1}, the hypothesis on the boundary data is extended from the above to any arbitrary piecewise constant function on the real line. The goal of this paper will be to relax this assumption. Assuming solely that the boundary data $F$ is a \emph{regulated} function, i.e., the left and right limits
\begin{equation}
\label{eqn::regulated}
F(t^+) = \lim_{h \to 0+} F(t+h) ; \quad F(t^-) = \lim_{h \to 0-} F(t+h)
\end{equation}
exist and are finite for all $t \in \overline{\R}$, and that for some $c>0$
\begin{equation}
\label{eqn::inequalities}
F(x)\le \lambda-c,\quad x<0;\quad F(x)\ge -\lambda+c,\quad x\ge 0
\end{equation}
 which roughly corresponds to the non existence of a continuation threshold, we can show that the corresponding level line is well defined almost surely as a continuous transient curve. Moreover, it is almost surely determined by the field. 

This also allows us, for a zero boundary GFF $h$, to consider the set of level lines of different heights. By this we mean the level lines of $h+F$, where $F$ ranges over (the bounded harmonic extensions of) all regulated functions on $\R$. Strengthening the results of \cite{msig1}, \cite{wwll1}, we are able to prove a general monotonicity principle for the level lines, which is both a key tool in our existence proof, and an interesting result in its own right. This is deeply intertwined with the reversibility property of the level lines, which we are also able to prove in general; see Theorems \ref{thm::monotonicity} and \ref{thm::reversibility}.

A further point of interest is that we obtain some continuity in the level lines as a consequence of our proof. That is, if we take a sequence of piecewise constant functions $F_n$ converging monotonically uniformly to some $F$, then the level lines of height $F_n$ for a zero boundary GFF converge almost surely to the level line of height $F$. This convergence is with respect to Hausdorff distance, after conformally mapping everything to the unit disc. 

We remark that our hypothesis on the boundary data is satisfied by a wide range of functions, including the special class of \emph{functions of bounded variation}. Any such function can be described almost everywhere as the integral of a finite Radon measure $\rho$, and this connection allows us to deduce that the marginal law of a level line with such boundary data is given by what we call an $\SLE_4(\rho)$ process. This is the natural analogue of an $\SLE_4(\underline{\rho})$ process, where the vector $\underline{\rho}$ is replaced by a measure. Our results therefore demonstrate the existence of such processes, as well as establishing some further properties. 

We first recall the definition of what it means for a curve, and more generally a Loewner chain, to be a level line. If we have a Loewner chain $(K_t, t\geq 0)$ in $\HH$, with associated sequence of conformal maps $g_t: \HH\setminus K_t \to \HH$, we will often want to describe the \emph{image under $g_t$} of a point $x$ on the real line. To do this, for any $x\leq 0$ we define a process $V_t^L(x)$ by setting it equal to $g_t(x)$ if $x\notin K_t$ and if $x\in K_t$, taking it to be the image of the leftmost point of $\R\cap K_t$ under $g_t$. We define a process $V_t^R(x)$ for $x\geq 0$ analogously. The process $V_t^{L}(x)$ for $x\in \R_-$, or $V_t^R(x)$ for $x\in \R_+$, is what we define to be the \emph{image of $x$ under $g_t$}.

\begin{definition}[\cite{msig1, wwll1}]
\label{def::gff_levelline}
Suppose that $F$ is $L^1$ with respect to harmonic measure on $\R$ viewed from some point in $\HH$ and that $h$ is a zero boundary $\GFF$ in $\HH$. 
If $(K_t, t\geq 0)$ is a Loewner chain and $(g_t, t\geq 0)$ is the corresponding sequence of conformal maps, set $f_t=g_t-W_t$, and let $V_t^R(x)$ (resp. $V_t^L(x)$) be the image of $x\ge  0$ (resp. $x\leq 0$) under $g_t$.  
Let $\eta_t^0$ be the bounded harmonic function on $\HH$ with boundary values (see Figure \ref{fig::intro_coupling})
\[\begin{cases}
F(f_t^{-1}(x)), &\text{if }x\ge V_t^R(0^+)-W_t,\\
\lambda,&\text{if } 0\le x<V_t^R(0^+)-W_t,\\
-\lambda, &\text{if } V_t^L(0^-)-W_t\le x<0,\\
F(f_t^{-1}(x)), &\text{if } x< V_t^L(0^-)-W_t.
\end{cases}\] 
Define, for $z\in\HH\setminus K_t$,
\[\eta_t(z)=\eta_t^0(f_t(z)).\]

We say that $K$ is a level line of $h+F$ if there exists a coupling $(h, K)$ such that the following domain Markov property holds: for any finite $K$-stopping time $\tau$, given $K_{\tau}$, the conditional law of $(h+F)|_{\HH\setminus K_{\tau}}$ is equal to the law of $h\circ f_{\tau}+\eta_{\tau}$. 

\end{definition}
  
\begin{figure}[ht!]
\begin{center}
\includegraphics[width=0.7\textwidth]{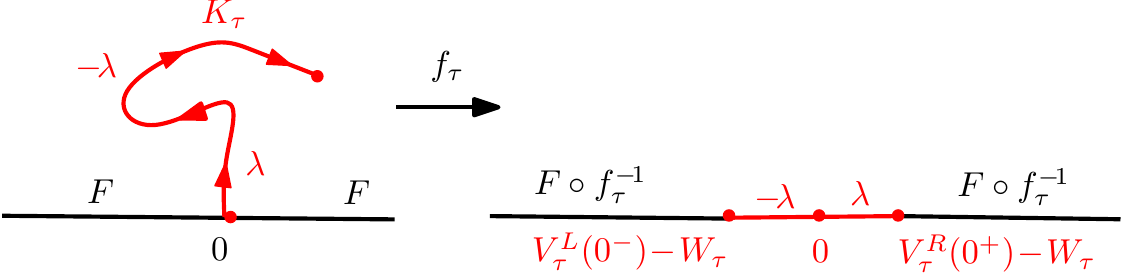}
\end{center}
\caption{\label{fig::intro_coupling} The left hand side shows the boundary values of the harmonic function $\eta_\tau$ in $\HH\setminus K_\tau$. This is the image under $f_\tau^{-1}$ of the harmonic function $\eta_\tau^0$ in $\HH$, whose boundary values are shown on the right hand side.}
\end{figure}

Note that this definition is the same for any two functions $F_1$ and $F_2$ which are equal almost everywhere, since the harmonic extensions of such functions are necessarily equal. 
From Definition \ref{def::gff_levelline}, we can see that the so-called level lines of the $\GFF$ have an intriguing property that distinguishes them from level lines of an ordinary smooth function. Namely, once one conditions on a level line, the conditional expectation of the field on one side of the curve differs by $2\lambda$ from the value on the other side. In a sense, a level line is more like a ``level cliff" where there is a prescribed jump between the two sides of the curve.
\medbreak

More generally, we say that a Loewner chain $(K_t,t\geq 0)$ is a level line of a GFF $h$ in a domain $D$ from $a\in \partial D$ to $b\in \partial D$ if $(\varphi(K_t),t\geq 0)$ is a level line of $\varphi(h)$ as in Definition \ref{def::gff_levelline}, where $\varphi$ is a conformal map from $D\to \HH$ sending $a$ to $0$ and $b$ to $\infty$.

\begin{theorem}\label{thm::gff_levelline_coupling}
[Coupling]
Assume the same notations as in Definition \ref{def::gff_levelline}. Suppose that the function $F$ is regulated and satisfies (\ref{eqn::inequalities}) for some $c>0$. 
Then there exists a coupling satisfying the conditions in Definition \ref{def::gff_levelline}. Moreover, in this coupling, the Loewner chain $K$ is almost surely generated by a continuous and transient curve $\gamma$ with almost surely continuous driving function.
\end{theorem}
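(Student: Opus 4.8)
The plan is to build the coupling by approximating the general regulated boundary data $F$ by a decreasing (or increasing) sequence of piecewise constant functions $F_n$ converging uniformly to $F$, for which the coupling and curve existence are already known from \cite{msig1, wwll1}. Concretely, since $F$ is regulated it can be approximated uniformly by step functions; by exploiting the freedom in \eqref{eqn::inequalities} we can arrange for $F_n \ge F$ (say) with $F_n \downarrow F$ uniformly, and each $F_n$ still satisfies the inequalities \eqref{eqn::inequalities} with a slightly smaller constant $c_n > 0$. For each $n$ we have a zero boundary GFF $h$ (the same field for all $n$), and a level line $\gamma^n$ of $h + F_n$, which is a continuous transient curve by the piecewise constant theory. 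The key input is the monotonicity result, Theorem \ref{thm::monotonicity}: coupling all the $\gamma^n$ with the same field, monotonicity of the boundary data forces $\gamma^n$ to be monotone in an appropriate sense (the curves are ordered, e.g. $\gamma^{n+1}$ lies to one side of $\gamma^n$), so that the hulls $K^n_t$ converge. Using the conformal maps to the unit disc as in the stated continuity result, one gets an almost sure limiting Loewner chain $K$, with driving functions $W^n$ converging to some $W$.

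The main steps would then be: (i) set up the approximation and the simultaneous coupling, invoking Theorem \ref{thm::monotonicity} to get monotone convergence of the hulls and hence of the Loewner driving functions $W^n \to W$, with the limit being a genuine Loewner chain; (ii) show the limiting chain $K$ is generated by a continuous transient curve — here one needs a tightness/equicontinuity argument for the curves $\gamma^n$ (after mapping to $\U$), plus an argument that the limit does not degenerate (continuity of the driving function should follow from uniform estimates, using \eqref{eqn::inequalities} to rule out the curve hitting the boundary prematurely, i.e.\ the absence of a continuation threshold); and transience would follow from controlling the behaviour near $\infty$, again uniformly in $n$. (iii) Identify the limit as a level line of $h+F$: one must pass to the limit in the domain Markov property of Definition \ref{def::gff_levelline}. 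For a fixed stopping time $\tau$, the conditional law of $(h+F_n)|_{\HH \setminus K^n_\tau}$ is $h \circ f^n_\tau + \eta^n_\tau$; since $F_n \to F$ uniformly, the harmonic functions $\eta^n_\tau$ converge (uniformly on compacts) to $\eta_\tau$, the conformal maps $f^n_\tau$ converge by Carathéodory convergence of the domains, and $h + F_n \to h + F$ as distributions, so the identity passes to the limit. Some care is needed because $\tau$ is a stopping time for the limiting filtration and one should check that $\tau$ is also (or can be approximated by) stopping times for the $\gamma^n$.

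The hard part, I expect, is step (ii): establishing that the limiting Loewner chain is actually generated by a \emph{continuous} curve with continuous driving function, rather than merely being a limit of such chains (a limit of curve-generated chains need not be curve-generated, and a limit of continuous driving functions could in principle blow up). This requires uniform-in-$n$ regularity estimates — e.g.\ a uniform modulus of continuity for the curves $\gamma^n$ in the disc, and a uniform lower bound on how the curve stays away from the boundary — which is exactly where the regulated hypothesis and the strict inequalities in \eqref{eqn::inequalities} must be used quantitatively (the constant $c$ giving a uniform "distance from the continuation threshold"). A natural route is to derive such estimates from the coupling with the GFF directly: bounds on the field's behaviour near the tip, together with the domain Markov property, control the probability that the curve makes a large jump or approaches $\partial \HH$, uniformly in $n$. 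Transience also needs a separate uniform estimate ensuring $\gamma^n(t) \to \infty$, which again should come from \eqref{eqn::inequalities} forcing the conditional mean of the field to push the curve away from bounded regions.

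Finally, a.s.\ uniqueness (that the level line is determined by the field) is not part of this theorem's statement, so I would not address it here; it is presumably handled separately using the monotonicity/reversibility machinery of Theorems \ref{thm::monotonicity} and \ref{thm::reversibility}.
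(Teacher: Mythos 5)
Your overall strategy (approximate the regulated $F$ uniformly by piecewise constant $F_n$, use the known couplings for $F_n$, extract a limit, identify it as a level line of $h+F$) is indeed the paper's strategy, but the proposal leaves out the two ingredients that actually carry the proof, and in one place it is circular. First, the circularity: you invoke Theorem \ref{thm::monotonicity} as the key input, but that theorem concerns level lines with \emph{general} boundary data and its proof relies on Theorem \ref{thm::gff_levelline_coupling}; what you are allowed to use at this stage is only the monotonicity for piecewise constant data, Corollary \ref{prop::generalmonotonicty}, which the paper proves beforehand precisely for this reason. Second, and more seriously, the step you yourself flag as the hard one --- that the limiting Loewner chain is generated by a continuous, transient curve with continuous driving function --- is exactly where the paper's proof has concrete content and your proposal has none: the paper verifies the Kemppainen--Smirnov crossing condition (Condition \ref{cond::quadcross}) \emph{uniformly} over the approximating family (Lemma \ref{lemma::crossingprobs}), and the verification is not a soft ``field near the tip'' estimate but a specific domination argument: by Corollary \ref{prop::generalmonotonicty} the conditioned curve lies to one side of the level line of $\tilde h + G_1$ (resp.\ $G_2$), i.e.\ of a fixed $\SLE_4(\rho^L;\rho^R)$ with force points at $0$ and weights determined by $c$ and $C$, and the crossing probability is then bounded by the probability that this fixed process hits a small ball around a boundary point, which is small by scaling and the known boundary estimate. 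This is where Condition (\ref{eqn::inequalities}) enters quantitatively. Your a.s.\ monotone convergence of hulls with a common field is a legitimate construction of a candidate set (it reappears as Lemma \ref{lem::mono_cvg}, \emph{after} existence and determination are known), but by itself it gives neither curve-generation, nor continuity of the driving function, nor transience; without the uniform crossing bound the argument does not close.

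The identification step (iii) is also materially different from what the paper does, and as written it has a gap: the convergence you obtain from tightness is weak convergence in law, so the statement ``$h+F_n\to h+F$ as distributions, hence the domain Markov identity passes to the limit'' does not parse --- in the weak limit there is no longer a fixed field coupled to the limiting curve, and conditional laws at stopping times do not pass to weak limits without further argument. The paper avoids this by reformulating the coupling as a one-point observable: by Lemma \ref{lem::coupling_iff_martingale}, the coupling exists if and only if $\eta_{\tau(t)}(z)$, in the conformal-radius parameterization, is a Brownian motion with respect to the filtration of the reparameterized driving function, and Lemmas \ref{lem::tildeeta_cvg_gamma} and \ref{lemma::etaconvergence} show this property survives the weak limit (the delicate point being that $\gamma\mapsto(\tilde W,\tilde\eta)$ is only continuous on the good sets supplied by the Kemppainen--Smirnov framework). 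If you want to salvage your direct approach to (iii), you would essentially have to prove an analogue of these lemmas anyway; and the converse direction of Lemma \ref{lem::coupling_iff_martingale} is what actually \emph{constructs} the coupled field for the limiting curve, which your sketch presupposes rather than builds.
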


The inequality on $F$ in Theorem \ref{thm::gff_levelline_coupling} guarantees that the corresponding level line will reach its target point $\infty$ before ``dying" at some continuation threshold. Indeed, the level line of a GFF with piecewise constant boundary data is only defined until the first time that it hits a section of $\R_+$ where the boundary data is less than $-\lambda$ or a section of $\R_-$ where it is greater than $\lambda$. In our case, if we allowed $F$ to approach $-\lambda$ (resp. $\lambda$) at some point in $\R_+$ (resp. in $\R_-$), then our current framework would not control the behaviour of the level line around this point (see discussion below.) Thus, we do not treat this situation here.

\begin{theorem}\label{thm::gff_levelline_determination}[Determination]
If $(h,\gamma)$ are coupled as in Theorem \ref{thm::gff_levelline_coupling}, then $\gamma$ is almost surely determined by $h$. Moreover, the curve $\gamma$ is almost surely simple. 
We call $\gamma$ the level line of $h+F$.
\end{theorem}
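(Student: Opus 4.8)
The plan is to derive the determination statement quickly from the coupling theorem, the monotonicity principle (Theorem~\ref{thm::monotonicity}), and the approximation of $F$ by piecewise constant boundary data, and to obtain simplicity from local absolute continuity with $\SLE_4$. First I would build a canonical $\sigma(h)$-measurable candidate curve. Since $F$ is regulated it is a uniform limit of step functions, and the strict inequalities in~(\ref{eqn::inequalities}) leave room to choose piecewise constant functions $F_n^-\nearrow F$ and $F_n^+\searrow F$, uniformly on $\overline\R$, each still satisfying~(\ref{eqn::inequalities}) with $c'=c/2$ once $n$ is large enough. For piecewise constant boundary data, the level line exists, is a continuous transient simple curve, and is almost surely a measurable function of the field \cite{msig1,wwll1}; write $\gamma_n^\pm$ for the level line of $h+F_n^\pm$, realised as a function of $h$. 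By the almost sure convergence of level lines under monotone uniform approximation of the boundary data stated in the Introduction, $\gamma_n^+$ and $\gamma_n^-$ converge almost surely (in the Hausdorff metric on curves in $\overline\U$, after conformally mapping $\HH\to\U$) to a common limit $\gamma(F)$, the level line of $h+F$ supplied by Theorem~\ref{thm::gff_levelline_coupling}; being an almost sure limit of $\sigma(h)$-measurable curves, $\gamma(F)$ is $\sigma(h)$-measurable.

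Next I would show that \emph{every} coupling realises this curve. Let $(h,\gamma)$ be any coupling as in Theorem~\ref{thm::gff_levelline_coupling}, and put $\gamma_n^\pm:=\gamma_n^\pm(h)$ on the same space — a deterministic function of $h$, so no extra choice is involved. Applying Theorem~\ref{thm::monotonicity} to the ordered boundary data $F_n^-\le F\le F_n^+$ shows that $\gamma$ lies weakly to one side of $\gamma_n^+$ and weakly to the other side of $\gamma_n^-$ for every $n$; that is, $\gamma$ is trapped between $\gamma_n^-$ and $\gamma_n^+$ in the partial order on curves. Since this order is closed under the relevant limits and $\gamma_n^\pm\to\gamma(F)$ almost surely, letting $n\to\infty$ forces $\gamma=\gamma(F)$ almost surely, so $\gamma$ is determined by $h$. (Equivalently, one may phrase this with two copies of the level line conditionally independent given $h$: each is trapped between the same $\gamma_n^\pm$, hence they agree almost surely, and a random variable that is almost surely equal to a conditionally independent copy of itself is measurable with respect to the conditioning $\sigma$-algebra.)

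For simplicity, I would argue by local absolute continuity. Conditioning on an initial segment of $\gamma$, the domain Markov property of Definition~\ref{def::gff_levelline} describes the remaining field as a GFF plus a \emph{bounded} harmonic function (boundedness because $F$ is regulated, hence bounded, and the $\pm\lambda$ contributions are bounded). Hence, on the event that the curve next performs an excursion inside a fixed ball $\overline{B}\subset\HH$, the law of that piece of $\gamma$ is mutually absolutely continuous with respect to that of a chordal $\SLE_4$, with Radon--Nikodym density bounded above and below; since $\SLE_4$ is simple, $\gamma$ has no self-intersections inside $\HH$. Near a point $x\in\R\setminus\{0\}$ the same comparison works with an $\SLE_4(\rho)$ process with a single force point — using that $F$, being regulated, is nearly constant near $x$ — which is again simple; and transience from Theorem~\ref{thm::gff_levelline_coupling} handles a neighbourhood of $\infty$. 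Combining these, $\gamma$ is almost surely simple.

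Modulo Theorem~\ref{thm::monotonicity} and the continuity of level lines in the boundary data, the determination step itself is short; the delicate points I expect to be the main obstacles are, first, making the partial order ``$\gamma$ lies to one side of $\gamma'$'' precise for curves that may return to $\R$ and checking that it passes to Hausdorff limits uniformly on $\overline\U$, and second, setting up the $\SLE_4(\rho)$ comparison in the simplicity argument at points where $\gamma$ touches $\R$ and where the regulated function $F$ may have a jump.
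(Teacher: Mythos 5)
There are two genuine gaps, and both are essentially circularity. First, your construction of the $\sigma(h)$-measurable candidate $\gamma(F)$ invokes ``the almost sure convergence of level lines under monotone uniform approximation of the boundary data stated in the Introduction''. In the paper that statement is Lemma \ref{lem::mono_cvg}, and its proof uses both Theorem \ref{thm::gff_levelline_determination} (via the uniqueness of the law of the coupled curve, Remark \ref{rmk::lawunique}) and Theorem \ref{thm::monotonicity}; it is a consequence of determination, not an input to it. Without determination, Proposition \ref{propn::weaklimit} only gives subsequential \emph{weak} limits of $\gamma_n^{\pm}$ that \emph{can} be coupled as level lines of $h+F$; it gives neither almost sure convergence, nor that the upper and lower approximations share a common limit, nor that this limit coincides with the curve $\gamma$ of an arbitrary given coupling. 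The same problem infects the squeeze: Theorem \ref{thm::monotonicity} as stated speaks of ``the level line of $h+F$'', which only makes sense once Theorem \ref{thm::gff_levelline_determination} is known, and the coupling-level monotonicity you actually need in the direction $F_n^{+}\ge F$ (trapping an \emph{arbitrary} coupling $\gamma$ from the right by a curve with the \emph{smaller} data being the general $F$) is Lemma \ref{lem::mono_continuitytransience}, whose proof runs through Lemma \ref{lem::reversibility_aux}, i.e.\ through the construction of a conditionally independent reverse level line $\gamma_F'$ of $-h-F$ and the identity $\gamma_F=\gamma_F'$ --- which \emph{is} the paper's proof of determination. (Your parenthetical remark about a curve equal to a conditionally independent copy being measurable with respect to the conditioning $\sigma$-algebra is exactly the mechanism the paper uses, but applied directly to $\gamma$ and $\gamma_F'$ via Lemmas \ref{lem::mono_aux} and \ref{lem::reversibility_aux}; made rigorous, your squeeze either collapses to that argument or relies on statements proved after it.)

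Second, the simplicity argument by local absolute continuity does not get off the ground at this stage. The coupling of Definition \ref{def::gff_levelline} prescribes the conditional law of the \emph{field} given the curve, not of the curve given the field, and for general regulated $F$ the marginal law of $\gamma$ has not been identified (it is only identified as an $\SLE_4(\rho)$ when $F$ has bounded variation, Theorem \ref{thm::sle4rho_existence}, and that identification comes later and uses the present theorem). So there is no Radon--Nikodym comparison of ``the law of the next excursion of $\gamma$ in a ball'' with $\SLE_4$ available before determination; transferring absolute continuity of fields to absolute continuity of level lines itself requires knowing the curve is a measurable function of the field. The paper avoids this entirely: simplicity is Lemma \ref{lem::simplicity}, proved directly from the domain Markov property together with the non-hitting statements in Lemmas \ref{lemma::absolutecontinuity}(1) and \ref{lemma::nothitendpoint}(1) (after a stopping time the conditional boundary data along the two sides of $\gamma[0,\tau]$ is $\mp\lambda$, so the continuation cannot return to $\gamma(0,\tau)$), and crucially this simplicity is an \emph{input} to Lemmas \ref{lem::mono_aux} and \ref{lem::reversibility_aux}, hence to determination --- not a corollary of it as in your ordering.
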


With this in hand, we can consider the collection of level lines determined by a given field. The following two theorems describe the interactions between the curves; corresponding to what one might expect from the level lines of a smooth function. 

\begin{theorem}\label{thm::monotonicity}
[Monotonicity]
Suppose that $F, G$ are functions satisfying the conditions in Theorem \ref{thm::gff_levelline_coupling}, and that $F(x)\ge G(x)$ for $x\in\R$. Suppose that $h$ is a zero boundary $\GFF$ on $\HH$ and $\gamma_F$ (resp. $\gamma_G$) is the level line of $h+F$ (resp. $h+G$). Then $\gamma_F$ lies to the left of $\gamma_G$ almost surely.
\end{theorem}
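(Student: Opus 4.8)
The plan is to condition on the curve $\gamma_G$, read off the conditional law of $h+F$ in the component of $\HH\setminus\gamma_G$ lying to the left of $\gamma_G$, and argue that $\gamma_F$ must be a level line living inside that component (and hence to the left of $\gamma_G$).

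First, by Theorem~\ref{thm::gff_levelline_determination} we may realise $\gamma_F$ and $\gamma_G$ as measurable functions of a single zero boundary $\GFF$ $h$, and by Theorems~\ref{thm::gff_levelline_coupling} and~\ref{thm::gff_levelline_determination} the curve $\gamma_G$ is a.s. a continuous, transient, simple curve from $0$ to $\infty$, so $\HH\setminus\gamma_G$ has exactly two components. Let $U$ be the one whose boundary contains $\R_-$ and $R=\HH\setminus\ov{U}$ the other; since ``$\gamma_F$ lies to the left of $\gamma_G$'' means $\gamma_F\subset\ov{U}$, it suffices to prove $\gamma_F\cap R=\emptyset$ a.s. We may assume $F\not\equiv G$, the case $F\equiv G$ being trivial since then $h+F=h+G$ and $\gamma_F=\gamma_G$.

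Next I would compute the conditional law of $(h+F)|_U$ given $\gamma_G$. By the domain Markov property of Definition~\ref{def::gff_levelline} (with $\eta_\tau$ as there; note that level line boundary data carries no winding term), given $\gamma_G$ the field $(h+G)|_U$ is a $\GFF$ in $U$ with boundary values $G$ on $\R_-$ and $-\lambda$ on $\gamma_G$. Writing $F-G$ also for its bounded harmonic extension to $\HH$ — which is $\ge 0$, and $>0$ throughout $\HH$ by the strong maximum principle — and using $(h+F)|_U=(h+G)|_U+(F-G)|_U$, we conclude that given $\gamma_G$ the field $(h+F)|_U$ is a $\GFF$ in $U$ with boundary values $F$ on $\R_-$ and $-\lambda+(F-G)$ on $\gamma_G$. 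Transported to $\HH$ by a conformal map $\varphi\colon U\to\HH$ with $\varphi(0)=0$ and $\varphi(\infty)=\infty$, this boundary data is regulated (using the continuity of harmonic extensions on the interiors of boundary arcs), is $\le\lambda-c$ on $\R_-$, and is $\ge-\lambda$ on $\R_+$, with strict inequality on the interior. The key point is that a level line of a $\GFF$ from $0$ to $\infty$ cannot cross its right‑hand boundary arc unless the boundary data there drops below $-\lambda$ (its continuation threshold), nor its left‑hand arc unless the boundary data there exceeds $\lambda$ — this is precisely the regime of Theorem~\ref{thm::gff_levelline_coupling}. For $(h+F)|_U$ in $U$ the left arc is $\R_-$ with data $\le\lambda-c<\lambda$ and the right arc is $\gamma_G$ with data $\ge-\lambda$; so there is no continuation threshold, and the level line of $(h+F)|_U$ in $U$ runs from $0$ to $\infty$ and stays in $\ov{U}$, possibly touching but never crossing $\gamma_G$. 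It remains to identify this level line with $\gamma_F$: letting $\sigma$ be the first time $\gamma_F$ enters $R$ (so $\gamma_F(\sigma)\in\gamma_G$ by continuity, as $\gamma_F$ can only pass from $U$ to $R$ across $\gamma_G$), on $[0,\sigma)$ the curve $\gamma_F$ lies in $U$ and, by Definition~\ref{def::gff_levelline}, the pair $\bigl((h+F)|_U,(\gamma_F(t))_{0\le t<\sigma}\bigr)$ satisfies the domain Markov property of a level line in $U$ run up to time $\sigma$; by the uniqueness part of Theorem~\ref{thm::gff_levelline_determination} applied in $U$, together with the fact that a level line run up to a time strictly before its continuation threshold is a restriction of the full level line, $\gamma_F|_{[0,\sigma)}$ agrees up to time change with the level line of $(h+F)|_U$ in $U$. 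Since the latter never enters $R$, we get $\sigma=\infty$, i.e. $\gamma_F\subset\ov{U}$, as desired.

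The main obstacle is this last identification step: showing that, conditionally on $\gamma_G$, the curve $\gamma_F$ run until it first enters $R$ really is a restriction of the level line of $(h+F)|_U$ in $U$. The subtlety is that $\gamma_F$ may touch $\gamma_G$ without crossing and then return into $U$, so one must rule out any genuine crossing while permitting such contacts with the boundary — this is exactly the delicate point at which the argument must strengthen those of \cite{msig1,wwll1} to accommodate general regulated boundary data, and it draws on the continuity and transience of Theorem~\ref{thm::gff_levelline_coupling} near the contact points. A second, minor, technical point is the behaviour at the starting point $0$, where the boundary data of $(h+F)|_U$ along $\gamma_G$ tends to $-\lambda+(F(0^+)-G(0^+))$, which may equal $-\lambda$ so that the uniform separation in (\ref{eqn::inequalities}) degenerates there; this is handled by a local absolute continuity argument near $0$, or by first perturbing $F$ and $G$ close to $0$. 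An alternative route is to prove the theorem by approximation: choose piecewise constant $F_n\downarrow F$ and $G_n\uparrow G$ uniformly with $F_n\ge F\ge G\ge G_n$, so that for large $n$ both satisfy (\ref{eqn::inequalities}) with constant $c/2$; apply the monotonicity for piecewise constant boundary data of \cite{msig1,wwll1} to get $\gamma_{F_n}$ to the left of $\gamma_{G_n}$; and pass to the limit using the almost sure Hausdorff convergences $\gamma_{F_n}\to\gamma_F$ and $\gamma_{G_n}\to\gamma_G$ obtained in the course of proving Theorem~\ref{thm::gff_levelline_coupling}, together with the fact that the left component of $\HH\setminus\gamma_{G_n}$ converges to that of $\HH\setminus\gamma_G$, all these curves being simple and joining $0$ to $\infty$.
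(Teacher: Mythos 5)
Your main route has a genuine gap at its central step. After conditioning on the whole of $\gamma_G$, the boundary data of $(h+F)|_U$ along $\gamma_G$ is $-\lambda+(F-G)$ (harmonic extension), which is strictly positive in $\HH$ but admits no uniform lower bound along $\gamma_G$: it tends to $0$ wherever $\gamma_G$ approaches a boundary point at which $F-G$ vanishes — not only at the tip $0$, but also at $\infty$ and at any boundary-touching point of $\gamma_G$ (note such touching is possible under (\ref{eqn::inequalities}), so the claim that $\HH\setminus\gamma_G$ has exactly two components is also false in general). Hence the conditional field in $U$ satisfies (\ref{eqn::inequalities}) for no $c>0$; this is precisely the borderline regime the paper explicitly cannot treat (boundary data approaching $-\lambda$ on the right side). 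Consequently you cannot invoke Theorem \ref{thm::gff_levelline_coupling} to get existence, continuity and transience of the level line of $(h+F)|_U$ in $U$, nor the uniqueness part of Theorem \ref{thm::gff_levelline_determination} in $U$ to identify $\gamma_F|_{[0,\sigma)}$ with it, and the heuristic ``cannot cross the right arc unless the data drops below $-\lambda$'' is exactly what is unavailable when the data merely stays $\ge-\lambda$ without a gap. So this is not a minor technicality at $0$ fixable by perturbation; it is the difficulty the paper's argument is built to avoid. The paper instead compares $\gamma_F$ with the \emph{reversed} level line $\gamma_G'$ of $-h-G$ from $\infty$ to $0$, conditionally independent given $h$, and conditions only on finite initial segments $\gamma_G'[0,\tau']$: the $2\lambda$ jump across $\gamma_G'$ puts data $\ge\lambda$ on the forbidden (right) side, where Lemma \ref{lemma::absolutecontinuity} needs no uniform gap, and a uniform gap is needed only near the single point $x_G$, where it comes from (\ref{eqn::inequalities}) for $F$ on $\R_+$ (Lemmas \ref{lem::mono_aux} and \ref{lemma::nothitendpoint}); reversibility, Lemma \ref{lem::reversibility_aux}, then replaces $\gamma_G'$ by $\gamma_G$.

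Your alternative approximation route is circular within this framework. Theorem \ref{thm::gff_levelline_coupling} yields only weak (subsequential) convergence of the $\gamma_{F_n}$, not the almost sure Hausdorff convergence $\gamma_{F_n}\to\gamma_F$, $\gamma_{G_n}\to\gamma_G$ that you invoke. That almost sure convergence is Lemma \ref{lem::mono_cvg}, whose proof in the paper uses Theorem \ref{thm::gff_levelline_determination} (uniqueness of the law of the coupled curve) and Theorem \ref{thm::monotonicity} itself (to upgrade ``same law and lies to the left'' to ``equal''). So as written you are assuming a statement whose available proof depends on the monotonicity being established; an independent proof of the almost sure convergence is exactly the missing content.
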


\begin{theorem}\label{thm::reversibility}
[Reversibility]
Suppose that $h$ is a $\GFF$ on $\HH$ whose boundary value satisfies the conditions in Theorem \ref{thm::gff_levelline_coupling}. Let $\gamma$ be the level line of $h$ from $0$ to $\infty$ and $\gamma'$  be the level line of $-h$ from $\infty$ to $0$. Then the two paths $\gamma$ and $\gamma'$ (viewed as sets) are equal almost surely. 
\end{theorem}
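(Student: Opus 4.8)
The plan is to reduce the statement to the case of piecewise constant boundary data, for which reversibility is already known (\cite{msig1, wwll1}), by using the continuity of level lines under monotone uniform approximation that comes out of the proof of Theorem~\ref{thm::gff_levelline_coupling}. This reduction also fits the paper's point of view that reversibility is intertwined with the monotonicity of Theorem~\ref{thm::monotonicity}, whose proof supplies exactly the continuity input we need.

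Write $h_0 = h - \widehat{F}$ for the associated zero boundary GFF, where $\widehat{F}$ is the bounded harmonic extension of the regulated boundary data $F$ of $h$, so that $\gamma$ is the level line of $h_0 + F$ from $0$ to $\infty$ and $\gamma'$ is the level line of $(-h_0) + (-F)$ from $\infty$ to $0$. Since $F$ is regulated it is a uniform limit of step functions, and a routine adjustment produces piecewise constant functions $F_n \downarrow F$ uniformly with each $F_n$ satisfying \eqref{eqn::inequalities} with the constant $c/2$ (after discarding finitely many terms). By Theorems~\ref{thm::gff_levelline_coupling} and \ref{thm::gff_levelline_determination}, the level line $\gamma_n$ of $h_0 + F_n$ from $0$ to $\infty$ is almost surely a simple curve determined by $h_0$, and by the continuity statement $\gamma_n \to \gamma$ almost surely for the Hausdorff metric after mapping $\HH$ conformally onto $\U$.

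Next I would treat the reversed curves $\gamma_n'$, defined as the level lines of $(-h_0) + (-F_n)$ from $\infty$ to $0$. Applying the conformal automorphism $z \mapsto -1/z$ of $\HH$, which exchanges $0$ and $\infty$, $\gamma_n'$ is the image of the level line from $0$ to $\infty$ of a zero boundary GFF perturbed by the pushforward of $-F_n$; this pushed-forward data is again piecewise constant, still satisfies \eqref{eqn::inequalities} (the two inequalities, on $\R_-$ and on $\R_+$, being interchanged in a way that matches the new target), and converges to the pushforward of $-F$ monotonically --- now increasingly --- and uniformly. Hence the continuity statement applies in this chart as well, and mapping back yields $\gamma_n' \to \gamma'$ almost surely for the Hausdorff metric in $\U$. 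Every curve above is a deterministic function of $h_0$, and $\gamma_n = \gamma_n'$ as sets almost surely by piecewise constant reversibility (\cite{msig1, wwll1}, applicable because $F_n$ is piecewise constant and \eqref{eqn::inequalities} excludes a continuation threshold). Intersecting these countably many almost sure events, on the resulting event uniqueness of Hausdorff limits forces $\gamma = \gamma'$ as sets, which is the claim.

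I expect the parts needing genuine care to be twofold, and both essentially bookkeeping rather than new analysis. First, one must produce the approximating sequence $F_n$ so that it is simultaneously piecewise constant, monotone, uniformly convergent, and \eqref{eqn::inequalities}-preserving; this rests only on $F$ being regulated, but should be written out. Second, one must track the boundary data and the direction of monotone convergence through the change of coordinates $z \mapsto -1/z$, so that the continuity statement --- established for level lines targeting $\infty$ --- legitimately controls the reversed curves $\gamma_n'$. The substantial analytic content (existence, determinism, and continuity of level lines for regulated boundary data) is already in place by this point, so the proof is a careful limiting argument built on Theorems~\ref{thm::gff_levelline_coupling}--\ref{thm::monotonicity} and the piecewise constant reversibility of \cite{msig1, wwll1}.
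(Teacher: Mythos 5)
Your argument can be made to work, but it is a genuinely different---and substantially heavier---route than the paper's. The paper does not approximate at the level of reversibility at all: it takes $\gamma$ (level line of $h+F$) and $\gamma'$ (level line of $-h-F$ from $\infty$), both continuous and transient by Theorem \ref{thm::gff_levelline_coupling}, couples them with the same field so that they are conditionally independent given $h$, and applies Lemma \ref{lem::reversibility_aux} (which rests on Lemma \ref{lem::mono_aux} applied in both directions, plus simplicity) to conclude $\gamma=\gamma'$ directly; determination (Theorem \ref{thm::gff_levelline_determination}) is obtained in the same breath. Your route instead transfers the piecewise constant reversibility of \cite{msig1,wwll1} through an almost sure limit.

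Two caveats, one of attribution and one of substance. First, the almost sure Hausdorff convergence $\gamma_n\to\gamma$ does not ``come out of the proof of Theorem \ref{thm::gff_levelline_coupling}'': that proof yields only tightness and subsequential weak convergence (Proposition \ref{propn::weaklimit}). The almost sure statement is Lemma \ref{lem::mono_cvg}, whose proof uses both Theorem \ref{thm::gff_levelline_determination} (uniqueness of the law of the limiting curve) and Theorem \ref{thm::monotonicity} (to order $\partial H$ against $\gamma$ and upgrade equality in law to almost sure equality). Since those theorems are proved in the paper without Theorem \ref{thm::reversibility}, your argument is not circular, but it is largely redundant: the lemma underlying determination, Lemma \ref{lem::reversibility_aux}, already yields $\gamma=\gamma'$ in one line, which is exactly the paper's proof. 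Second, for the reversed curves you need the increasing-monotone analogue of Lemma \ref{lem::mono_cvg} in the chart $z\mapsto -1/z$, where the transformed data satisfies $\tilde F_n\uparrow\tilde F$; this analogue is not stated in the paper and requires its own (symmetric) proof---monotone growth of the right-hand regions, identification of the weak limit via Proposition \ref{propn::weaklimit} and Theorem \ref{thm::gff_levelline_determination}, and the same ordering argument---so it is a genuine piece of work rather than pure bookkeeping. Granting Lemma \ref{lem::mono_cvg} and this analogue, the rest of your limiting argument (uniform monotone piecewise constant approximation preserving (\ref{eqn::inequalities}), piecewise constant reversibility without continuation threshold, and uniqueness of Hausdorff limits) is sound.
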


Now we will explain the relevance of Conditions (\ref{eqn::regulated}) and $(\ref{eqn::inequalities})$, which we need for our approach to work. Although one can make sense of what it means to be a level line of $h+F$ for any $F$ in $L^1$ (as in Definition \ref{def::gff_levelline}), before this work the existence of the coupling was only known for piecewise constant boundary data. 
The assumption that the boundary data $F$ is regulated corresponds precisely to the fact that $F$ can be uniformly approximated by piecewise constant functions. Indeed, our argument will use an approximation of $F$ by such functions, and a limit of the corresponding level lines. Thus with our current approach we are unable to say anything about functions which are not regulated. However, since Definition \ref{def::gff_levelline} still makes sense for a wider class of functions, it is an interesting question to determine the most general restrictions under which a coupling exists. For example, if one takes a GFF with boundary data which is $-\lambda$ in a neighbourhood to the left of $0$ and $\lambda$ in a neighbourhood to the right of $0$ then one can allow much rougher boundary data away from these neighbourhoods (for example, even Neumann boundary conditions, see \cite{hadamard}), and construct a weaker form of ``local coupling" with an $\SLE$ variant. Whether these types of coupling can be extended to a strong coupling as in Definition \ref{def::gff_levelline}, where the curve is also determined by the field, or whether the condition near $0$ can be relaxed is currently unknown.

Concerning Condition (\ref{eqn::inequalities}); the key to the proof of Theorem \ref{thm::gff_levelline_coupling} is the continuity and transience of the approximating level lines (with piecewise constant boundary data). This allows us to use the results of \cite{ksrc} (see details in Section \ref{subsec::cvg_curves}) to obtain a continuous limiting curve. If Condition (\ref{eqn::inequalities}) failed, the approximating level lines would only be defined up to a continuation threshold, and we would not be able to obtain such a limit. The continuity of the limiting curve is absolutely crucial to the proofs of Theorems \ref{thm::gff_levelline_determination} to \ref{thm::reversibility}. In fact, if the existence and the continuity of level lines were obtained for other boundary data, one could use similar proofs to get the corresponding theorems. However, whether continuity still holds in this set up is also a difficult open problem. Although it is natural to conjecture that for general regulated boundary data the level line will exist as a continuous curve until hitting a point on the boundary where Condition (\ref{eqn::inequalities}) fails, a ``continuation threshold" as in \cite{msig1},\cite{wwll1}, it is unclear whether or not the continuity will break down around this point. 
\medbreak
Finally, we identify the law of the level lines. It is proved in \cite{msig1, wwll1} that the level lines of GFF with piecewise constant boundary data are $\SLE_4(\underline{\rho})$ processes where $\underline{\rho}$ is a vector. In our context, when the boundary data is of bounded variation, the level lines turn out to be  $\SLE_4(\rho)$ processes, where $\rho$ is now a Radon measure. With the help of the GFF, we are able to obtain the existence, the continuity, and the reversibility of such processes, properties which are far from clear by the definition of the process through Loewner evolution.

\begin{theorem}\label{thm::sle4rho_existence}
Assume the same notations as in Theorem \ref{thm::gff_levelline_coupling}. Suppose further that $F$ is of bounded variation. Then in the coupling $(h,\gamma)$ given by Theorem \ref{thm::gff_levelline_coupling}, the marginal law of $\gamma$ is that of an $\SLE_4(\rho^L;\rho^R)$ process (see Section \ref{subsec::slerho}) where $\rho^R$ (resp. $\rho^L$) is a finite Radon measure on $\R_+$ (resp. on $\R_-$)  and
\[\begin{cases}
F(x)=\lambda(1+\rho^R([0,x])),& x\ge 0;\\
F(x)=-\lambda(1+\rho^L((x,0])), & x<0
\end{cases}\]
almost everywhere. In particular, we have the following properties of the $\SLE_4(\rho^L;\rho^R)$ process. Suppose that there exists $c>0$ such that 
\[\rho^L((x,0])\ge -2+c,\quad x<0,\quad \rho^R([0,x])\ge -2+c,\quad x> 0.\] Then
\begin{enumerate}
\item [(1)] There exists a law on continuous curves from $0$ to $\infty$ in $\overline{\HH}$ with almost surely continuous driving functions, for which the associated Loewner chain is an ar$\SLE_4(\rho^L;\rho^R)$ process. 
\item [(2)] The above continuous curve is almost surely simple and transient. 
\item [(3)] The time reversal of the above $\SLE_4(\rho^L;\rho^R)$ process has the same law as $\SLE_4(\tilde{\rho}^L;\tilde{\rho}^R)$ ,where 
\[\tilde{\rho}^R([x,\infty])=\rho^L((x,0]),\quad x<0; \quad \tilde{\rho}^L((x,\infty])=\rho^R([0,x]),\quad  x>0.\]
\end{enumerate}
\end{theorem}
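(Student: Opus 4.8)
\emph{Strategy.} The plan is to reduce to the piecewise constant case, where the law is already known, and then to read off the law of the driving function of $\gamma$ directly from the coupling of Theorem \ref{thm::gff_levelline_coupling}. I first record the correspondence between $F$ and $(\rho^L,\rho^R)$. If $F$ has bounded variation then, replacing $F$ by its right-continuous modification (harmless, since $F$ enters only through its harmonic extension, hence only up to a.e.\ equality), the functions $x\mapsto F(x)/\lambda-1$ on $\R_+$ and $x\mapsto -F(x)/\lambda-1$ on $\R_-$ are right-continuous and of bounded variation, hence are the distribution functions of finite signed Radon measures $\rho^R$ on $\R_+$ and $\rho^L$ on $\R_-$; this is exactly $\rho^R([0,x])=F(x)/\lambda-1$ and $\rho^L((x,0])=-F(x)/\lambda-1$. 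Conversely any such pair yields, via these formulae, a regulated function $F$ of bounded variation, and then Condition (\ref{eqn::inequalities}) for $F$ is, up to the harmless rescaling $c\leftrightarrow c/\lambda$, precisely the pair of bounds $\rho^R([0,x])\ge-2+c$, $\rho^L((x,0])\ge-2+c$ of the statement. When $F$ is piecewise constant, $\rho^L,\rho^R$ are finitely supported, and $F(x)=\lambda(1+\rho^R([0,x]))$, $F(x)=-\lambda(1+\rho^L((x,0]))$ is exactly the boundary data for which \cite{msig1,wwll1} identify the level line as an $\SLE_4(\underline\rho)$ process; in that case the whole statement — including (3), by the reversibility of $\SLE_4(\underline\rho)$ — is already known.

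\emph{The marginal law for general $F$.} Let $(h,\gamma)$ be the coupling from Theorem \ref{thm::gff_levelline_coupling}, with continuous driving function $W$ and maps $g_t$. By Definition \ref{def::gff_levelline}, for each fixed $z\in\HH$ the harmonic function $\eta_t(z)$ is, up to the time $z$ is swallowed and after localising, a bounded martingale in the filtration generated by $\gamma$. Writing $\eta_t^0$ as the Poisson extension of its (explicit) boundary data and substituting $f_t^{-1}$ expresses $\eta_t(z)$ as a concrete function of $W_t$, $g_t(z)$ and $\{g_t(u)\}_u$: the usual $\SLE_4$ term $\tfrac{2\lambda}{\pi}\arg(g_t(z)-W_t)$ together with integrals of $\arg(g_t(z)-g_t(u))$ against $\rho^L$ and $\rho^R$, plus an additive constant (genuinely constant because $F$ is bounded, so there is no drift contribution from a neighbourhood of $\infty$; here a Fubini argument, justified by $|\rho^L|,|\rho^R|<\infty$, is needed to commute $\int\rho(du)$ with the stochastic calculus). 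Applying It\^o's formula: the force-point terms lose their $W$-dependence, and — the key point — the choice $\kappa=4$ makes the It\^o correction of $\arg(g_t(z)-W_t)$ vanish. Setting the drift of $\eta_t(z)$ to zero for all $z$ then forces
\[
dW_t=2\,dB_t+\Big(\int_{\R}\frac{\rho(du)}{W_t-g_t(u)}\Big)dt,
\]
where $\rho$ denotes the signed measure equal to $\rho^L$ on $\R_-$ and $\rho^R$ on $\R_+$; this is precisely the defining equation of an $\SLE_4(\rho^L;\rho^R)$ process. One can make this rigorous without assuming in advance that $W$ is a semimartingale by instead approximating $F$ uniformly and monotonically by piecewise constant functions $F_n$, with associated measures $\rho_n\to\rho$ weakly, invoking the convergence $\gamma_n\to\gamma$ of level lines obtained in the course of proving Theorem \ref{thm::gff_levelline_coupling}, and passing to the limit in the $\SLE_4(\underline{\rho_n})$ equations.

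\emph{Properties (1)--(3).} Given $(\rho^L,\rho^R)$ as in the second half of the statement, define $F$ by the displayed formulae: it is regulated, of bounded variation, and satisfies (\ref{eqn::inequalities}), so Theorem \ref{thm::gff_levelline_coupling} applies and, by the previous paragraph, the associated Loewner chain is an $\SLE_4(\rho^L;\rho^R)$ — this is (1). Part (2) is then immediate from the continuity and transience of $\gamma$ in Theorem \ref{thm::gff_levelline_coupling} together with its simplicity in Theorem \ref{thm::gff_levelline_determination}. For (3), take $h$ with boundary data $F$, so that its level line $\gamma$ from $0$ to $\infty$ is an $\SLE_4(\rho^L;\rho^R)$, and let $\gamma'$ be the level line of $-h$ from $\infty$ to $0$. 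Since $\kappa=4$ corresponds to $\chi=0$, level lines of the GFF carry no winding correction under conformal maps; thus $z\mapsto-1/z$ identifies $\gamma'$ with the image of the level line from $0$ to $\infty$ of a GFF with boundary data $-F(-1/\cdot)$. Re-expressing the latter in measure form and unwinding the conformal map produces exactly the measures $\tilde\rho^L,\tilde\rho^R$ of the statement, so $\gamma'$ is the $\SLE_4(\tilde\rho^L;\tilde\rho^R)$ from $\infty$ to $0$. Theorem \ref{thm::reversibility} gives $\gamma=\gamma'$ as sets, and — both being simple curves determined by $h$ — this upgrades to equality of the time-reversed curves, which is assertion (3).

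\emph{Main obstacle.} The delicate point is the passage to the limit (equivalently, the Fubini/interchange step in the direct computation): one must control $1/(W^n_t-g^n_t(u))$ uniformly in $n$ as $u$ approaches the tip, and show that the drift integrals against $\rho_n$ converge to those against $\rho$. This is exactly where Condition (\ref{eqn::inequalities}) — the uniform bound $\rho^R([0,x]),\rho^L((x,0])\ge-2+c$ — is indispensable: it rules out a continuation threshold and prevents the level line from interacting with the force points in an uncontrolled way, keeping the relevant quantities bounded. The coordinate bookkeeping behind the formulae for $\tilde\rho^L,\tilde\rho^R$ in part (3) is routine but must be carried out carefully to land on the stated expressions.
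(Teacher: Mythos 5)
Your treatment of the $F\leftrightarrow(\rho^L;\rho^R)$ correspondence and of parts (1)--(3) (continuity, transience and simplicity from Theorems \ref{thm::gff_levelline_coupling} and \ref{thm::gff_levelline_determination}, reversibility from Theorem \ref{thm::reversibility}) matches the paper. The genuine gap is in the step you yourself flag as the ``main obstacle'': identifying the marginal law of the driving function. Your It\^o computation is only formal until you know that $W$ is a semimartingale with $d\langle W\rangle_t=4\,dt$, and the rigorization you propose --- approximate by piecewise constant $F_n$ and ``pass to the limit in the $\SLE_4(\underline{\rho}_n)$ equations'' --- is not carried out and rests on a false claim. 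Condition (\ref{eqn::inequalities}) does \emph{not} keep $1/(W^{n}_t-V^{n}_t(u))$ bounded: since the accumulated masses may lie in $(-2+c,0)$, the approximating level lines do hit $\R$, the driving functions collide with the force-point processes, and the drift integrands blow up at those times; the SDE (\ref{wtsde}) holds only on intervals away from collisions, with instantaneous reflection, so weak convergence of the curves and driving functions gives no direct control of the drift integrals, and you give no argument for convergence of the quadratic variations or of the drifts against $\rho_n\to\rho$.

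The paper avoids the limit-in-the-SDE route entirely and works with the limiting coupling itself: Lemma \ref{lem::coupling_iff_martingale} shows that $\eta_{\tau(t)}(z)$ is a Brownian motion in the filtration of $(W_{\tau(t)})$, and Lemma \ref{lem::sle4_martingalecharacterisation} extracts the SDE from this. The device you are missing is how the semimartingale property of $W$ is obtained there: in the explicit representation of $2\eta_t(z)$ as integrals of $\arg(g_t(z)-V^{L,R}_t(x))$ against $\rho^L,\rho^R$ plus $\arg(g_t(z)-W_t)$ terms, every force-point term is differentiable in $t$ (finiteness of $\rho^L,\rho^R$ justifies differentiating under the integral), so the local martingale property of $\eta_t(z)$ forces $\arg(g_t(z)-W_t)$ to be a semimartingale; Schwarz's formula recovers $\log(g_t(z)-W_t)$ from its imaginary part, hence $W$ itself is a semimartingale, after which the drift computation (evaluated as $g_t(z)-W_t\to 0$) yields $d\langle W\rangle_t=4\,dt$ and the drift in (\ref{wtsde}), and instantaneous reflection follows since a continuous curve with continuous driving function spends Lebesgue-measure-zero time on $\R$. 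If you replace your limiting argument by this intrinsic derivation, your proof closes; as written, the identification of the law is incomplete.
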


\begin{remark}
Although Theorem \ref{thm::sle4rho_existence} gives us existence of $\SLE_4(\rho^L;\rho^R)$ processes, we do not derive uniqueness in law. That is, we have not excluded the possibility that there exists another law on Loewner chains satisfying the definition of an $\SLE_4(\rho^L;\rho^R)$ process.
\end{remark}
\begin{remark}
Item (3) is the so-called reversibility of $\SLE$. The reversibility was derived previously for $\SLE_{\kappa}$ in \cite{ZhanReversibilityChordalSLE}, for $\SLE_{\kappa}(\rho)$ where $\rho$ is a vector in \cite{ZhanDualityChordalSLE, MillerSheffieldIG2, MillerSheffieldIG3, WernerWuCLESLE}. In Theorem \ref{thm::sle4rho_existence}, we derive the reversibility of $\SLE_4(\rho)$ where $\rho$ is a Radon measure.
\end{remark}
\medbreak
\noindent\textbf{Outline.} The structure of the paper is as follows. In Section \ref{sec::preliminaries}, we discuss briefly the necessary background theory, and collect some results that will be important to us. We also define the class of $\SLE_\kappa(\rho)$ process and generalize some of the theory from \cite{msig1}, \cite{wwll1} which will help us in the sequel. In Sections \ref{sec::nonboundary_intersecting} and \ref{sec::monotonicity}, we set up a general framework for the level lines of a GFF, under the assumption that they exist and are given by continuous transient curves. In particular, we show that they are monotonic in the boundary data, and describe where they can and cannot hit the boundary. Sections \ref{sec::weakcvg} and \ref{sec::existence} address the existence of continuous transient curves which can be coupled as level lines of a GFF, provided the boundary data satisfies the conditions of Theorem \ref{thm::gff_levelline_coupling}. The proof of this is via an approximation argument; using a general theory for the weak convergence of curves, as set out in \cite{ksrc}. The key point in the proof is the monotonicity obtained in Section \ref{sec::monotonicity}. In Section \ref{sec::reversibilityetc} we prove Theorems \ref{thm::gff_levelline_determination} to \ref{thm::reversibility} using the ideas from Sections \ref{sec::nonboundary_intersecting} and \ref{sec::monotonicity}. Finally, we complete the proof of Theorem \ref{thm::sle4rho_existence} in Section \ref{sec::sle4rho}.  
\medbreak
\noindent\textbf{Acknowledgments.} We thank Nathana\"{e}l Berestycki, Jason Miller, Steffen Rohde, and Scott Sheffield for helpful discussions. 
We thank Avelio Sepúlveda and Juhan Aru for precious comments on the previous version of this paper. 
The main part of this work was done while H.\ Wu was at MIT and 
H.\ Wu's work is funded by NSF DMS-1406411. E.\ Powell's is funded by a Cambridge Centre for Analysis EPSRC studentship.

\section{Preliminaries}\label{sec::preliminaries}
\subsection{Regulated functions and functions of bounded variation}

We say that a function $F$ on $\R$ is \emph{regulated} if it admits finite left and right limits 
\[F(t^+) = \lim_{h \to 0+} F(t+h) ; \quad F(t^-) = \lim_{h \to 0-} F(t+h)\]
at every point $t\in \R$, including $\infty$. Equivalently, see \cite[Secion 7.6]{jd}, $F$ is regulated if it can be uniformly approximated on $\R$ by piecewise constant functions which change value only finitely many times. It is this formulation of the definition that will be useful to us in the sequel.

Another type of function which is of particular interest in the current paper is the class of functions of \emph{bounded variation}. Let us consider the connection (\ref{eqn::frho}) between pairs of Radon measures $(\rho^L;\rho^R)$ and functions $F$ on the real line. We saw above that piecewise constant functions correspond to purely atomic measures. In general, finite Radon measures are in one-to-one correspondence with right-continuous functions of bounded variation.

The space of functions of bounded variation are those $F$ which satisfy
\[\sup_{a<b} \left( \sup \left\{  \sum_i |F(x_i)-F(x_{i-1})| \; : \; \{x_i\} \; \text{a finite partition of} \; [a,b] \right\} \right)< \infty. \]
For a proof of this equivalence, see \cite[Theorem 3.29]{realanalysis}. Note that these functions are clearly regulated. So, provided they satisfy the correct bounds on $\R_-$ and $\R_+$, functions of bounded variation meet the conditions of Theorem \ref{thm::gff_levelline_coupling}.


Furthermore, if a bounded variation function is also absolutely continuous, then the corresponding measures $(\rho^L;\rho^R)$ are absolutely continuous with respect to Lebesgue measure, and writing \[\rho^L(dx)= f^L(x) \, dx, \quad \rho^R(dx)=f^R(x) \, dx\] we have that the function is differentiable almost everywhere with derivative equal to $f^L(x)$ on $\R_-$ and $f^R(x)$ on $\R_+$. 

\subsection{A result on the convergence of curves}\label{subsec::cvg_curves}
To show existence of the level line of a $\GFF$ with general boundary data as given in Theorem \ref{thm::gff_levelline_coupling}, we will attempt to approximate it by level lines of the field with piecewise constant boundary data. For this, a result from \cite{ksrc} on the weak convergence of curves, satisfying certain conditions on crossing probabilities, will be crucial. 

In order to state the result, we need to define what we mean by \emph{crossings of topological quadrilaterals.}

\begin{definition}\label{defn::topquad}
A \emph{topological quadrilateral} $Q=(V;S_k, k=0,1,2,3)$ consists of a domain $V$, along with four boundary arcs $S_0, S_1, S_2, S_3$, which can be mapped homeomorphically to a square in such a way that the boundary arcs are in counterclockwise order and correspond to the edges of the square. For any topological quadrilateral, there exists a unique positive $L$ and a conformal map from $Q$ onto the rectangle $[0,L]\times [0,1]$, such that the boundary arcs are mapped to the edges of the quadrilateral and, in particular, $S_0$ is mapped to $\{0\}\times [0,1]$. We call this unique $L$ the \emph{modulus} of $Q$, denoted by $m(Q)$.
\end{definition}

\begin{definition}\label{defn::quadcross}
We will often consider topological quadrilaterals in $\HH$ which lie on the boundary in the sense that $S_1, S_3\subset \R$ and $S_0,S_2 \subset \HH$. If we have such a quadrilateral, then we say that a curve $\gamma:[T_0,T_1]\to \C$ \emph{crosses} Q if there is a subinterval $[t_0,t_1]\subset [T_0,T_1]$, such that $\gamma(t_0,t_1)\subset V$ but $\gamma[t_0,t_1]$ intersects both $S_0$ and $S_2$. 
\end{definition}

Essentially, the condition that will be required for weak convergence will be the following: 
\begin{condition}\label{cond::quadcross}
For any simple curve $\gamma$ on $\HH$ we say that $Q$ is a topological quadrilateral in $H_\tau:=\HH\setminus \gamma[0,\tau]$ if it is the image of the square $(0,1)^2$ under a homeomorphism $\psi$. We define the sides of $Q$:
$S_0, S_1, S_2, S_3$, to be the images of 
$$ \{0\}\times (0,1), \;\;\; (0,1)\times \{0\}, \;\;\; \{1\}\times (0,1), \;\;\; (0,1)\times \{1\}$$
under $\psi$. We consider Q such that the opposite sides $S_1, S_3$ are contained in $\partial H_t$ and define a crossing of $Q$ to be a curve in $H_t$ which connects the two opposite sides $S_0$ and $S_2$. Finally, we say that $Q$ is \emph{avoidable} if it doesn't disconnect $\gamma(\tau)$ and $\infty$ inside $H_t$. 

A family $\Sigma $ of probability measures on simple curves from $0$ to $\infty$ in $\HH$ is said to satisfy a \emph{conformal bound on an unforced crossing} if there exists a constant $M>0$ such that for any $\PP\in \Sigma$, for any stopping time $\tau$, and any avoidable quadrilateral $Q$ of $H_\tau$ whose modulus $m(Q)$ is greater than $M$,
$$ \PP\left(\gamma[\tau, \infty) \, \text{crosses} \, Q \;|\; \gamma[0,\tau]\right) \leq 1/2. $$
\end{condition}

\begin{figure}[ht!]
\begin{subfigure}[b]{0.48\textwidth}
\begin{center}
\includegraphics[width=0.73\textwidth]{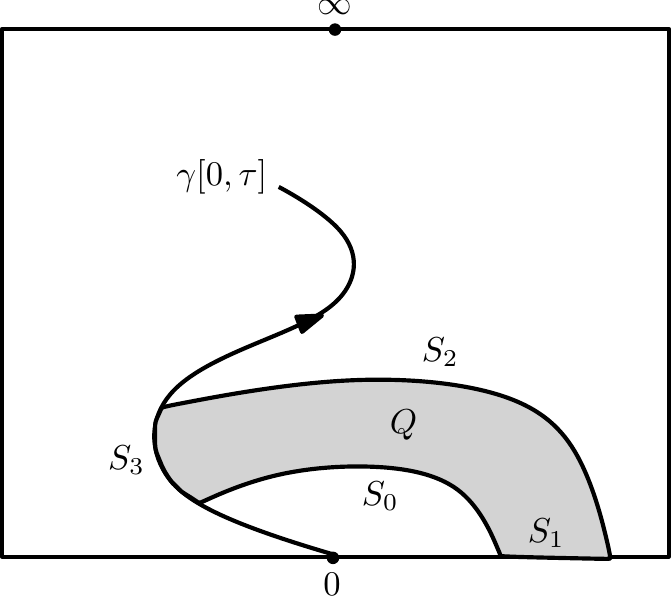}
\end{center}
\caption{An avoidable quadrilateral of $H_\tau=\HH\setminus \gamma[0,\tau]$.}
\end{subfigure}
$\quad$
\begin{subfigure}[b]{0.48\textwidth}
\begin{center}\includegraphics[width=0.73\textwidth]{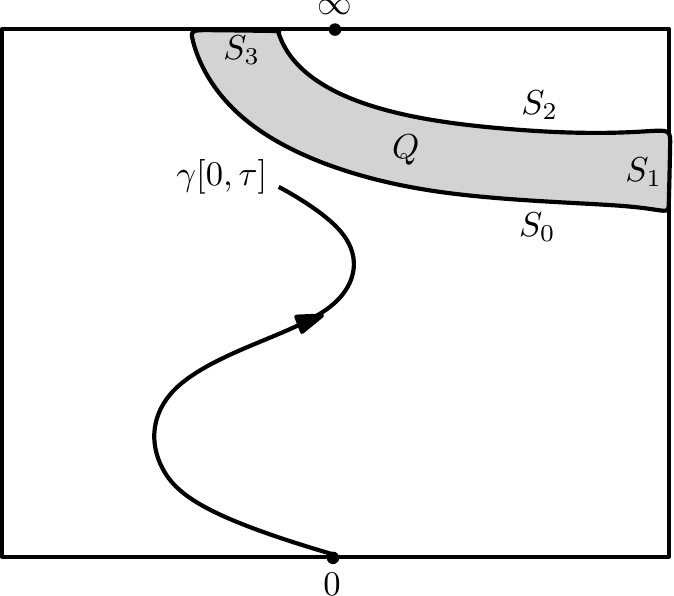}
\end{center}
\caption{An unavoidable quadrilateral of $H_\tau=\HH\setminus \gamma[0,\tau]$.}
\end{subfigure}
\caption{\label{fig::quadrilaterals}} 
\end{figure}
Now we may state the result. 

\begin{proposition}\label{propn::convergenceofcurves}
Suppose that $(W^{(n)})_{n\in \N}$ is a sequence of driving processes of random Loewner chains that are generated by continuous simple random cuves $(\gamma^{(n)})_{n\in \N}$ in $\HH$, satisfying Condition \ref{cond::quadcross}. Suppose that the $(\gamma^{(n)})_{n\in \N}$ are parameterized by half plane capacity. Then
\begin{itemize}
\item $(W^{(n)})_{n\in \N}$ is tight in the metrisable space of continuous functions on $[0,\infty)$ with the topology of uniform convergence on compact subsets of $[0,\infty)$. 
\item $(\gamma^{(n)})_{n\in \N}$ is tight in the metrisable space of continuous functions on $[0,\infty)$ with the topology of uniform convergence on the compact subsets of $[0,\infty)$. 
\end{itemize}
Moreover, if the sequence converges weakly in either of the topologies above, then it also converges weakly in the other and the limits agree in the sense that the law of the limiting random curve is the same as the that of the random curve generated under the law of the limiting driving process.  In particular, any subsequential limit of the sequence of curves a.s. generates a Loewner chain with continuous driving function.
\end{proposition}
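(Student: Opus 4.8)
The plan is to treat this as an adaptation of the main results of \cite{ksrc}, matching their hypotheses to Condition \ref{cond::quadcross} and then invoking their machinery; I outline the steps. The first task is to translate Condition \ref{cond::quadcross} into a uniform bound on the conditional probability that $\gamma^{(n)}$ makes an \emph{unforced crossing} of a round annulus $\{z : r < |z-z_0| < R\}$ --- one whose crossing is not forced by the requirement of separating the curve's tip from $\infty$ --- of large modulus $\log(R/r)$. Since an annulus of modulus $m$ sits between topological quadrilaterals of comparable modulus, the bound $1/2$ in Condition \ref{cond::quadcross} passes to a bound bounded away from $1$ for annulus crossings once $\log(R/r)$ exceeds an absolute constant; iterating over $k$ nested annuli gives geometric decay in $k$ of the probability of $k$ successive unforced crossings, uniformly in $n$.

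Next I would use this geometric crossing estimate to obtain a uniform (in $n$) modulus of continuity for the curves in the half-plane capacity parameterization, as in \cite[Section 3]{ksrc}: the oscillation of $\gamma^{(n)}$ over any capacity-interval of length $\delta$ is at most $\phi(\delta)$ with probability at least $1-\phi(\delta)$, for a deterministic $\phi$ with $\phi(\delta)\to 0$ (after the usual conformal normalization controlling behaviour near $\infty$ on each compact time range). Arzel\`a--Ascoli together with Prokhorov's theorem then yield tightness of $(\gamma^{(n)})_{n\in\N}$. For $(W^{(n)})_{n\in\N}$ I would invoke the standard Loewner estimate bounding $|W^{(n)}_t - W^{(n)}_s|$ by the diameter of $\gamma^{(n)}[s,t]$ together with the capacity increment; the modulus-of-continuity control therefore transfers to the driving functions, giving their tightness as well.

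For the equivalence of the two modes of convergence, suppose a subsequence converges. Coupling each $\gamma^{(n)}$ with its driving process $W^{(n)}$ and applying Skorokhod's representation theorem, I may assume that along this subsequence $\gamma^{(n)}\to\gamma$ and $W^{(n)}\to W$ almost surely, uniformly on compacts. Then I would argue two things: (i) the limit $\gamma$ still obeys the annulus-crossing bound, since the event of making $k$ unforced crossings is (up to boundary effects) closed, so the Portmanteau theorem preserves the bound in the limit; and (ii) any curve satisfying such a crossing bound generates a Loewner chain with continuous driving function, and the map sending such a curve to its driving function --- as well as its inverse, Loewner's equation restricted to driving functions of such curves --- is continuous at $\gamma$. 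Combining (i) and (ii), $W$ must be the driving function of $\gamma$; hence the two limits agree, and in particular any subsequential limit of $(\gamma^{(n)})$ a.s.\ generates a Loewner chain with continuous driving function.

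\textbf{Main obstacle.} The genuinely hard input is point (ii): that under a uniform crossing bound the curve-to-driving-function correspondence is a homeomorphism onto its image. One must exclude, in the limit, self-touchings or narrow ``fjords'' of the curve that would make the driving function discontinuous or cause the Loewner hull to fail to be generated by the curve; the annulus-crossing estimate is precisely the quantitative device that rules this out, and pushing it through is the technical core of \cite{ksrc}, which I would cite rather than reprove. The only new work on our side is the conformal-geometry dictionary of the first step, relating the quadrilateral formulation in Condition \ref{cond::quadcross} to the annulus formulation used there.
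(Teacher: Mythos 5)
Your proposal follows essentially the same route as the paper: the paper proves this proposition simply by citing Theorem 1.5 and Corollary 1.7 of \cite{ksrc}, and your outline (translating Condition \ref{cond::quadcross} into annulus-crossing bounds, deducing tightness of the curves and driving processes, and then the equivalence of the two modes of convergence) is exactly the internal machinery of \cite{ksrc}, which you also end up citing for the technical core. The only difference is expository: you sketch the Kemppainen--Smirnov argument where the paper just quotes it.
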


\begin{proof}
This may be found in \cite{ksrc} cf. Theorem 1.5 and Corollary 1.7. 
\end{proof}

In fact, we will need to apply this theorem when the curves $(\gamma^{(n)})_{n\in \N}$ correspond to certain $\SLE_4(\underline{\rho}^L;\underline{\rho}^R)$ processes. In this case they may hit the real line, and so are not necessarily contained in $\HH$, as required by the Proposition. However, as discussed before the proof of Theorem 1.10 in \cite{ksrc}, the result extends to curves such as ours, and so we may apply it without concern.

\subsection{The zero boundary Gaussian free field}
\label{gffprelims}
In this section we will describe the zero boundary  Gaussian free field ($\GFF$) in an arbitrary domain $D\subsetneq \C$. We will always assume that the domain has harmonically non-trivial boundary, meaning that a Brownian motion started from a point in the interior will hit the boundary almost surely.

We start with the \emph{Green's function} $G_D$ in $D$, which is the unique function in $D$ such that 
\begin{itemize}
\item $\Delta G_D(z, \cdot) = 2\pi \delta_z (\cdot)$ for each $z\in D$, and 
\item $G_D(z,w)=0$ if $z$ or $w$ is in $\partial D$. 
\end{itemize}
Explicitly, $$G_D(z,w)=-\log|z-w|-\tilde{G}_z(w)$$ where $\tilde{G}_z(w)$ is the harmonic extension of $w\mapsto -\log |z-w|$ from $\partial D$ to $D$. The Green's function is conformally invariant in the sense that for any conformal map $\phi$ on $D$, and $z,w\in D$, we have  $$G_D(z,w)=G_{\phi(D)}(\phi(z),\phi(w)).$$

Roughly speaking, the $\GFF$ will be the random Gaussian ``function" on $D$ with $\cov (h(z),h(w))=G_D(z,w)$. However, it can only be made sense of rigorously as a random distribution on $D$. 
For $H_s(D)$ the space of smooth compactly supported functions on $D$, we let $(\cdot, \cdot)$ denote the normal $L^2$ inner product on $H_s(D)$. We may also endow $H_s(D)$ with the \emph{Dirichlet inner product} defined by 
\[(f,g)_{\nabla}= \frac{1}{2\pi} \int_D\nabla f(z)\cdot \nabla g(z) d^2 z \]
and we denote its Hilbert space completion under Dirichlet inner product by $H(D)$. 

For $\{\phi_n\}_{n\geq 0}$ an orthonormal basis of $H(D)$, we define the \emph{zero boundary  $\GFF$} $h$ to be the random sum 
$h := \sum_n \alpha_n \phi_n$,
where the $\alpha_n$'s are i.i.d. Gaussians with mean 0 and variance 1. This almost surely diverges in $H(D)$, but makes sense as a distribution. That is, the limit $\sum_n \alpha_n (\phi_n, p):= (h,p)$ almost surely exists for each $p\in H_s(D)$, and $p\mapsto (h,p)$ is almost surely a continuous linear functional on $H_s(D)$.
Note that for any $f\in H_s(D)$ we have that $-\Delta f=p$ is also in $H_s(D)$ and so can define
\[(h,f)_\nabla:= \frac{1}{2\pi} (h,p).\]
Then $(h,f)_\nabla$ is a Gaussian with mean 0 and variance 
$$\frac{1}{4\pi^2} \sum_n (\phi_n, p)^2 = \sum_n (\phi_n, f)_\nabla^2 = (f,f)_\nabla.$$ In fact, this characterizes the Gaussian free field.
Furthermore, noticing that for $p\in H_s(D)$ 
$$\Delta^{-1} p := \frac{1}{2\pi} \int_D G_D(\cdot, w)p(w) \, dw $$ is a smooth function in $D$ whose Laplacian is $p$ and vanishes on $\partial D$, we see that for any $f,g,p,q\in H_s(D)$  
\[\cov((h,f)_\nabla, (h,g)_\nabla) = (f,g)_\nabla,\quad \cov((h,p),(h,q))=\int \int_{D\times D} p(z)G_D(z,w)q(w) d^2z d^2w.\]

\begin{proposition}\label{prop::gff_markov}
[The Markov Property]
Let $W\subset D$ be open and $h$ be a zero boundary $\GFF$ on $D$. Then we can write 
$$h=h_1 + h_2$$
where $h_1$ and $h_2$ are independent, $h_1$ is harmonic in $W$, and $h_2$ is a zero boundary  $\GFF$ in $W$.
\end{proposition}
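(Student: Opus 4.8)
The plan is to prove this via the orthogonal decomposition of the Dirichlet space $H(D)$ determined by the open set $W$, and then split the Gaussian series defining $h$ along that decomposition.

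First I would introduce the closed subspace $H(D;W)\subset H(D)$ defined as the closure, in the Dirichlet inner product $(\cdot,\cdot)_\nabla$, of the smooth compactly supported functions on $W$ (each such function, extended by $0$, lies in $H_s(D)$ since its support is a compact subset of $W$). I would then show that its orthogonal complement $H(D;W)^\perp$ consists exactly of the functions in $H(D)$ that are harmonic in $W$: indeed $f\perp H_s(W)$ means $\int_W \nabla f\cdot\nabla\phi\,d^2 z=0$ for all $\phi\in H_s(W)$, i.e.\ $\int_W f\,\Delta\phi\,d^2 z=0$ after integrating by parts, which says precisely that $\Delta f=0$ in $W$ in the distributional sense, and Weyl's lemma upgrades this to a genuine harmonic function on $W$; the converse inclusion is immediate by the same integration by parts. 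Hence $H(D)=H(D;W)\oplus H(D;W)^\perp$.

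Next, I would pick an orthonormal basis $\{\phi_n\}$ of $H(D;W)$ and an orthonormal basis $\{\psi_m\}$ of $H(D;W)^\perp$; their union is an orthonormal basis of $H(D)$, so by the definition of the zero boundary GFF we may realise $h=\sum_n\alpha_n\phi_n+\sum_m\beta_m\psi_m$ with $\{\alpha_n\}\cup\{\beta_m\}$ i.i.d.\ standard Gaussians. Set $h_2=\sum_n\alpha_n\phi_n$ and $h_1=\sum_m\beta_m\psi_m$. Independence of $h_1$ and $h_2$ is then immediate, as they are measurable functions of disjoint families of independent variables. For $h_2$: since the Dirichlet inner products of $D$ and of $W$ agree on functions supported in $W$, restriction to $W$ is an isometry carrying $H(D;W)$ onto $H(W)$, so $\{\phi_n|_W\}$ is an orthonormal basis of $H(W)$ and the series for $h_2$ is, by definition, that of a zero boundary GFF in $W$. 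For $h_1$: for each fixed $\phi\in H_s(W)$ we have $(h_1,\phi)_\nabla=\sum_m\beta_m(\psi_m,\phi)_\nabla=0$ almost surely since every $\psi_m\perp H(D;W)$; running $\phi$ over a countable $(\cdot,\cdot)_\nabla$-dense subset of $H_s(W)$ and using continuity of $\phi\mapsto(h_1,\phi)$, we obtain on a single almost sure event that $h_1$ is weakly harmonic in $W$, hence (Weyl again) a.s.\ equal there to a smooth harmonic function.

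I expect the genuine content — and the step needing the most care — to be the identification of $H(D;W)^\perp$ with the functions in $H(D)$ harmonic in $W$, together with the two regularity inputs it leans on: Weyl's lemma for the passage from weak to classical harmonicity, and the verification that restriction to $W$ really is a surjective isometry onto $H(W)$, so that no part of the completion is lost when passing between the two domains. Once these are in place, the remainder is routine manipulation of orthonormal expansions and independent Gaussian coefficients.
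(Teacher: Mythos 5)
Your proposal is correct and follows essentially the same route as the paper's argument: decompose $H(D)$ orthogonally into the closure of $H_s(W)$ and the subspace of functions harmonic in $W$, split the Gaussian series along an orthonormal basis adapted to this decomposition, and use elliptic regularity (Weyl's lemma) to upgrade the distributional harmonicity of $h_1$. The only cosmetic difference is that the paper exhibits the decomposition constructively, writing $f=f_w+f_w^\perp$ with $f_w^\perp$ the harmonic extension of $f|_{D\setminus W}$, whereas you characterize the orthogonal complement abstractly via integration by parts; both verifications are equivalent.
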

This tells us that, given $h|_{D\setminus W}$, the conditional law of $h|_W$ is that of a zero boundary  $\GFF$ in $W$, plus the harmonic extension of $h|_{D\setminus W}$ to $W$. 

Suppose that $F$ is $L^1$ with respect to harmonic measure on $\R$ viewed from some point (hence every point) in $\HH$; we also denote its bounded harmonic extension to $\HH$ by $F$. Then the \emph{$\GFF$ with mean F} is defined to be the sum, $h+F$, of a zero boundary  $\GFF$ and $F$.  

\begin{proposition}\label{prop::gffabscont} 
Suppose that $D_1$ and $D_2$ are two simply connected domains with non empty intersection, and $h_i$ is a zero boundary  $\GFF$ on $D_i$ for $i=1,2$. Let $F_i$ be harmonic on $D_i$, $i=1,2$ and $U\subset D_1 \cap D_2$ be a simply connected open domain. Then
\begin{enumerate}
\item [(1)] If $\dist(U,\partial D_i)>0$ for $i=1,2$, then the laws of $$(h_1+F_1)|_U \;\;\text{and} \;\; (h_2+F_2)|_U $$ are mutually absolutely continuous.
\item [(2)] Suppose there is a neighbourhood $\bar{U}\subset U'$ such that $D_1\cap U'=D_2\cap U'$ and that $F_1-F_2$ tends to 0 along sequences approaching points in $\partial D_i \cap U'$. Then the laws of $$(h_1+F_1)|_U \;\;\text{and} \;\; (h_2+F_2)|_U $$ are mutually absolutely continuous.
\end{enumerate}
\end{proposition}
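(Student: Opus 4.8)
The plan is to combine two standard ingredients: the Markov property (Proposition~\ref{prop::gff_markov}), and the Cameron--Martin theorem for the $\GFF$. The latter says that if $h$ is a zero boundary $\GFF$ on a domain $V$ and $\rho\in H(V)$, then the law of $h+\rho$ is mutually absolutely continuous with respect to that of $h$, with Radon--Nikodym derivative $\exp\!\big((h,\rho)_\nabla-\tfrac12(\rho,\rho)_\nabla\big)$, which is almost surely strictly positive; this is immediate from the series representation of Section~\ref{gffprelims}, being the statement that a Gaussian is quasi-invariant under shifts by elements of its Cameron--Martin space. In both items the idea is to localize $h_1+F_1$ and $h_2+F_2$ to a suitable common domain using the Markov property, identify the discrepancy between them there as (conditionally) a deterministic harmonic function, and absorb an appropriate cut-off of it into a Cameron--Martin shift.

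\medskip
\noindent\emph{Item (1).} Using $\dist(U,\partial D_i)>0$, first choose a bounded open set $U'$ with $\overline U\subset U'$ and $\overline{U'}\subset D_1\cap D_2$. Applying Proposition~\ref{prop::gff_markov} to each $h_i$ on $U'$ writes $h_i=h_i^{U'}+\phi_i$, with $h_i^{U'}$ a zero boundary $\GFF$ on $U'$ (so its law does not depend on $i$), $\phi_i$ harmonic on $U'$, and the two independent; hence $(h_i+F_i)|_{U'}=h_i^{U'}+\psi_i$ where $\psi_i:=\phi_i+F_i$ is harmonic on $U'$ (here $F_i$ is harmonic on $D_i\supset U'$). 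Since $\overline U$ is a compact subset of the open set $U'$, $\psi_i$ is smooth on a neighbourhood of $\overline U$; fix a cut-off $\chi\in C_c^\infty(U')$ with $\chi\equiv 1$ on a neighbourhood of $\overline U$, so that $\chi\psi_i\in H_s(U')\subset H(U')$ while $\chi\psi_i=\psi_i$ on $U$. Conditioning on $\psi_i$ and applying Cameron--Martin on $U'$, the law of $h_i^{U'}+\chi\psi_i$ is equivalent to that of $h_i^{U'}$ with almost surely positive density; restricting to $U$ (which preserves mutual absolute continuity under pushforward) and using $(h_i+F_i)|_U=(h_i^{U'}+\chi\psi_i)|_U$, the conditional law of $(h_i+F_i)|_U$ given $\psi_i$ is equivalent to $\mu_{U'}$, the law of a zero boundary $\GFF$ on $U'$ restricted to $U$. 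Integrating over $\psi_i$, a mixture of measures each equivalent to $\mu_{U'}$ with a.e.\ positive densities is again equivalent to $\mu_{U'}$ (the ``$\ll$'' direction is clear; ``$\gg$'' follows by applying Fubini to the averaged density). Hence both $(h_1+F_1)|_U$ and $(h_2+F_2)|_U$ have law equivalent to $\mu_{U'}$, and transitivity gives the claim.

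\medskip
\noindent\emph{Item (2).} Run the same scheme with $W:=D_1\cap U'=D_2\cap U'$ in place of $U'$; note $U\subset W$. Proposition~\ref{prop::gff_markov} gives $(h_i+F_i)|_W=h_i^W+\psi_i$, with $h_i^W$ a zero boundary $\GFF$ on $W$ (law independent of $i$) and $\psi_i$ harmonic on $W$ with boundary values those of $h_i+F_i$ on $\partial W$. Split $\partial W$ into the shared part $R:=\partial D_1\cap U'=\partial D_2\cap U'$ and the artificial part $\Gamma\subset\partial U'$; since $\overline U$ is a compact subset of $U'$, $\dist(\overline U,\Gamma)>0$. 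On $R$ the boundary values of $h_i+F_i$ are simply those of $F_i$ (the zero boundary part vanishes there), and $F_1-F_2\to 0$ along $R$ by hypothesis, so the contributions of $R$ to $\psi_1$ and $\psi_2$ coincide; thus $\psi_1-\psi_2=\mathcal G_1-\mathcal G_2$, where each $\mathcal G_i$ is harmonic on $W$, bounded and vanishing continuously on $R$, and smooth on $W$. Multiplying $\psi_1-\psi_2$ by a cut-off equal to $1$ on $U$ and vanishing near $\Gamma$ produces a function agreeing with $\psi_1-\psi_2$ on $U$ which — this is the crux, see below — lies in $H(W)$. Cameron--Martin on $W$, followed by the mixture-and-transitivity argument of item (1) (now comparing each of $(h_1+F_1)|_U$, $(h_2+F_2)|_U$ to the law of a zero boundary $\GFF$ on $W$ with mean the bounded harmonic extension of $F_1|_R$, restricted to $U$), then yields the conclusion.

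\medskip
\noindent\emph{Main obstacle.} The one genuinely delicate step is the Dirichlet-energy claim just used: that the cut-off of $\psi_1-\psi_2$ — equivalently, the $\Gamma$-contribution of the random harmonic part, which is bounded and vanishes continuously on $R$ — has finite Dirichlet energy \emph{up to the possibly irregular shared boundary} $R$, and hence defines a legitimate element of $H(W)$. This is exactly where the hypothesis $F_1-F_2\to 0$ on $R$ (rather than mere boundedness) enters, since it is what forces the discrepancy between the two fields to vanish at the shared boundary; without it the cut-off function would have nonzero, possibly rough, trace on $R$ and need not have finite energy. We note that the naive alternative — exhausting $U$ by sets $U_n$ compactly contained in $D_1\cap D_2$ and letting $n\to\infty$ — does \emph{not} work, since mutual absolute continuity on each $U_n$ need not imply it on $\bigcup_n U_n$; this is why item (2) genuinely requires the localization to $W$ rather than a reduction to item (1). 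By contrast, once the localization of item (1) is set up the remaining steps there are routine. (Alternatively, both items may be deduced from the corresponding absolute-continuity statements in \cite{msig1}.)
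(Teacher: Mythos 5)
The paper does not actually prove this proposition: its ``proof'' is the citation \cite[Proposition 3.2]{msig1}, and the route you take (Markov property plus a Cameron--Martin shift of a cut-off harmonic discrepancy) is precisely the standard argument behind that cited result, so you are not on a different path so much as supplying the argument the paper outsources. Your item (1) is essentially complete: the decomposition on $U'$, the cut-off $\chi\psi_i\in H_s(U')$, the conditional Cameron--Martin step, and the mixture-and-transitivity argument are all correct. One tacit assumption deserves flagging: you use that $\overline U$ is a compact subset of $U'$, i.e.\ that $U$ is bounded. The paper's restatement omits this hypothesis (the cited result includes it for part (1)); without it your cut-off does not exist and the argument, as well as the statement itself, becomes problematic for unbounded $U$.

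For item (2) there is a genuine gap, and it is exactly the step you yourself label the crux. You assert, but do not prove, two things: first, that the harmonic part $\phi_i$ of $h_i$ in $W$ is bounded near $R$ away from $\Gamma$ and vanishes continuously on $R$ (``the zero boundary part vanishes there'' is a nontrivial almost-sure statement about the conditional expectation of the field near $\partial D_i$, not a definition -- it is itself a lemma-level fact in \cite{msig1}); second, that the resulting discrepancy, a bounded harmonic function on $W$ vanishing on the possibly very irregular shared boundary $R$, has finite Dirichlet energy up to $R$ after the cut-off, hence defines an element of $H(W)$. This second point is the real content of part (2): one needs an argument, for instance conformally mapping $W$ to $\HH$ (Dirichlet energy is conformally invariant), representing the image of the discrepancy as the Poisson integral of a bounded function vanishing a.e.\ on the image of $R$, using Schwarz reflection to get local regularity and an energy bound there, and checking that the image of the cut-off region accumulates only on that part of the boundary. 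As written, your proposal proves item (1) but only reduces item (2) to an unproved energy estimate (plus the unproved boundary behaviour of $\phi_i$); either supply those estimates or, as you note at the end, fall back on the citation to \cite{msig1}, which is what the paper itself does.
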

\begin{proof}
\cite[Proposition 3.2]{msig1}.
\end{proof}
\subsection{Local sets for the $\GFF$}

The theory of \emph{local sets} for the Gaussian free field was first  introduced by Schramm and Sheffield in \cite{ss}, and we quote several of their results here.
For $D$ a simply connected domain and $A$ a random closed subset of $\bar{D}$, we let 
\[A_\delta := \{z\in D: d(z,A)\leq \delta\}\] and $\mathcal{A}_\delta$ be the smallest $\sigma$-algebra for which $A$ and the restriction of $h$ to the interior of $A_{\delta}$ are measurable. Setting $\mathcal{A}=\cap_{\delta\in \QQ_+} \mathcal{A}_\delta$ we obtain a $\sigma$-algebra which is intuitively the smallest such making $A$, and $h$ restricted to some infinitesimal neighbourhood of $A$, measurable. With this in mind, we will often refer to $\mathcal{A}$ as $(A,h|_A)$. 

\begin{definition}
Suppose that $(h,A)$ is a coupling of a $\GFF$ in $D$ and a random closed subset $A\subset \overline{D}$. Then we say that $A$ is a \emph{local set} for $h$ if either of the following equivalent statements hold:
\begin{enumerate}
\item [(1)] For any deterministic open subset $U\subset D$ we have that, given the orthogonal projection of $h$ onto $h^\perp(U)$, the event $\{A\cap U =\emptyset \}$ is independent of the orthogonal projection of $h$ onto $H(U)$. This means that the conditional probability of $\{A\cap U=\emptyset \}$ given $h$ is a measurable function of the orthogonal projection of $h$ onto $H^\perp(U)$.
\item [(2)] Given $\mathcal{A}$, the conditional law of $h$ is that of $h_1+h_2$, for $h$ a zero boundary  $\GFF$ on $D\setminus A$ and $h_1$ an $\mathcal{A}$-measurable random distribution which is almost surely harmonic on $D\setminus A$.
\end{enumerate}
In this case, we let $\mathcal{C}_A$ be the conditional expectation of $h$ given $(A,h|_A)$, corresponding to $h_1$ in Item (2).
\end{definition}

The interactions between local sets display some nice properties, which we will describe in the following propositions.

\begin{proposition}\label{propn::localsetunions}
Suppose that $A_1$, $A_2$ are local sets for a $\GFF$ $h$, which are conditionally independent given $h$. Then $A=A_1\cup A_2$ is also local for $h$ and moreover, given $(A_1,A_2,A,h|_A)$, the conditional law of $h$ is given by $\mathcal{C}_A$ plus an instance of the zero boundary  $\GFF$ in $D\setminus A$.  
\end{proposition}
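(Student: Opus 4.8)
The plan is to establish the two assertions in turn. For the first — that $A=A_1\cup A_2$ is local — I would use characterisation (1) of a local set. Fix a deterministic open $U\subset D$. Since $\{A\cap U=\emptyset\}=\{A_1\cap U=\emptyset\}\cap\{A_2\cap U=\emptyset\}$ and $A_1,A_2$ are conditionally independent given $h$, we get $\mathbb{P}(A\cap U=\emptyset\mid h)=\mathbb{P}(A_1\cap U=\emptyset\mid h)\,\mathbb{P}(A_2\cap U=\emptyset\mid h)$. Locality of each $A_i$ says the corresponding factor is a measurable function of the projection of $h$ onto $H^\perp(U)$, hence so is their product; this is precisely (1), so $A$ is local and $\mathcal{C}_A$ is well defined.

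For the second assertion I would reveal the two sets successively. Conditionally on $\mathcal{A}_1=(A_1,h|_{A_1})$, locality of $A_1$ gives $h=\mathcal{C}_{A_1}+h^1$ where $h^1$, independently of $\mathcal{A}_1$, is a zero boundary GFF in $D\setminus A_1$ (a direct sum of independent GFFs over the connected components). The crucial step is to check that, conditionally on $\mathcal{A}_1$, the trace $A_2\cap(D\setminus A_1)$ — closed up in $D\setminus A_1$; its union with $A_1$ recovers $A$ — is still a local set for $h^1$ in $D\setminus A_1$. This is exactly where conditional independence is used: combining the conditional independence of $A_1$ and $A_2$ given $h$ with the locality of $A_2$ for $h$, one runs the same factorisation of $\mathbb{P}(A_2\cap V=\emptyset\mid\cdot)$ as in the first paragraph but now for the conditional field $h^1$, to verify (1) for $A_2$ relative to $h^1$ on $D\setminus A_1$. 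Applying the local-set decomposition a second time, $h^1=\mathcal{C}^{1}_{A_2}+h^2$ with $h^2$ a zero boundary GFF in $(D\setminus A_1)\setminus A_2=D\setminus A$, independent of $(\mathcal{A}_1,A_2,h^1|_{A_2})$. Hence $h=(\mathcal{C}_{A_1}+\mathcal{C}^{1}_{A_2})+h^2$.

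It remains to recognise $\phi:=\mathcal{C}_{A_1}+\mathcal{C}^{1}_{A_2}$ as $\mathcal{C}_A$. First one checks that the $\sigma$-algebra generated by $(A_1,A_2,h|_{A_1},h^1|_{A_2})$ coincides with $\sigma(A_1,A_2,A,h|_A)=:\mathcal{F}$ (routine: $A_1,A_2\subset A$, so infinitesimal neighbourhoods of $A_1,A_2$ lie inside one of $A$, and $\mathcal{C}_{A_1}$ is $\mathcal{A}_1$-measurable). Then $\phi$ is harmonic in $D\setminus A$, is $\mathcal{F}$-measurable, and, conditionally on $\mathcal{F}$, $h-\phi=h^2$ is a zero boundary GFF in $D\setminus A$. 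Comparing with $h=\mathcal{C}_A+g_A$ from the locality of $A$, set $\psi:=\phi-\mathcal{C}_A$, which is harmonic in $D\setminus A$ and $\mathcal{F}$-measurable. Pairing against a deterministic test function $f$: since $\mathbb{E}[(h^2,f)\mid\mathcal{F}]=0$ and $\psi$ is $\mathcal{F}$-measurable, expanding $g_A=h^2+\psi$ gives $\mathbb{E}[(g_A,f)^2]=\mathbb{E}[(h^2,f)^2]+\mathbb{E}[(\psi,f)^2]$. On the other hand, since conditionally on $\sigma(A,h|_A)\subset\mathcal{F}$ (using the tower property) both $g_A$ and $h^2$ are zero boundary GFFs in $D\setminus A$, the conditional variance formula gives $\mathbb{E}[(g_A,f)^2]=\mathbb{E}[(h^2,f)^2]$ (both equal the expectation of the GFF variance of $(\cdot,f)$ in $D\setminus A$). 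Therefore $\mathbb{E}[(\psi,f)^2]=0$ for every $f$, so $\psi=0$ a.s., $\phi=\mathcal{C}_A$, and the conditional law of $h$ given $\mathcal{F}$ is $\mathcal{C}_A$ plus a zero boundary GFF in $D\setminus A$, as claimed.

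I expect the main obstacle to be the crucial step of the second paragraph: making rigorous that $A_2\cap(D\setminus A_1)$ is a local set for $h^1$ in the \emph{random} domain $D\setminus A_1$. This requires setting up the notion of a local set (and the attendant $\mathcal{A}_\delta$-type $\sigma$-algebras) on a random domain and being careful about exactly how the conditional-independence hypothesis enters — it is indispensable, since without it $A_1\cup A_2$ need not be local. Secondary care is needed for degenerate cases ($D\setminus A_1$ or $D\setminus A$ empty or disconnected) and for the bookkeeping matching the various $\sigma$-algebras; I would handle the GFF on $D\setminus A_1$ and on $D\setminus A$ component by component throughout.
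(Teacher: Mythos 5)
Your overall architecture is sound, and two of its three pieces are essentially fine: the factorisation of $\mathbb{P}(A\cap U=\emptyset\mid h)$ in your first paragraph is exactly the right way to get locality of the union, and your closing identification of $\mathcal{C}_{A_1}+\mathcal{C}^1_{A_2}$ with $\mathcal{C}_A$ via equality of conditional variances (a projection/Pythagoras argument) is correct and rather neat. Bear in mind, though, that the paper gives no proof of this proposition at all --- it simply cites \cite[Lemma 3.10]{ss} --- so what you are reconstructing is Schramm--Sheffield's argument, and the place where you stop short is precisely the part that carries the weight there.

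The genuine gap is the central claim of your second paragraph: that, conditionally on $(A_1,h|_{A_1})$, the trace of $A_2$ on $D\setminus A_1$ is a local set for the conditional field $h^1$ in the random domain $D\setminus A_1$. You assert this follows from ``the same factorisation as in the first paragraph'', but there is nothing to factorise at this stage: the issue is not a product of two events, it is whether conditioning on the germ data of $A_1$ destroys characterisation (1) for $A_2$. The correct mechanism is different. Since the germ $\sigma$-algebra of $(A_1,h|_{A_1})$ is contained in $\sigma(A_1,h)$, conditional independence of $A_1$ and $A_2$ given $h$ gives that $\{A_2\cap U=\emptyset\}$ is independent of that germ $\sigma$-algebra given $h$; a tower-property computation then shows that the conditional probability of $\{A_2\cap U=\emptyset\}$ given $h$ and the germ data equals $\mathbb{P}(A_2\cap U=\emptyset\mid h)$, which is measurable with respect to the projection of $h$ onto $H^{\perp}(U)$, so the required conditional independence survives the extra conditioning. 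One must further observe that the $H(U)$-projections of $h$ and $h^1$ agree (because $\mathcal{C}_{A_1}$ is harmonic on $U$), and then run the equivalence of characterisations (1) and (2) of local sets under a regular conditional probability given $A_1$, which reduces the ``random domain'' problem to deterministic domains $D\setminus a_1$. None of this appears in your write-up --- you flag it yourself as the main obstacle --- and together with the unproven identification of the $\sigma$-algebra generated by $(A_1,A_2,h|_{A_1},h^1|_{A_2})$ with $\sigma(A_1,A_2,A,h|_A)$ (dismissed as ``routine''), it constitutes exactly the technical content of the cited lemma. As it stands, your proposal is an accurate outline with the decisive step missing rather than a complete proof.
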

\begin{proof}
\cite[Lemma 3.10]{ss}. 
\end{proof}

\begin{proposition}\label{propn::localsetaverages}
Let $A_1,A_2$ be connected local sets which are conditionally independent and $A=A_1\cup A_2$. Then $\mathcal{C}_A-\mathcal{C}_{A_2}$ is almost surely a harmonic function in $D\setminus A$ which tends to zero along any sequence converging to a limit in
\begin{itemize}
\item a connected component of $A_2\setminus A_1$ which is larger than a singleton, or
\item a connected component of $A_1\cap A_2$ which is larger than a singleton, if the limit is at a positive distance from either $A_2\setminus A_1$, or $A_1\setminus A_2$.
\end{itemize}
\end{proposition}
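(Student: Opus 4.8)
The plan is to dispatch harmonicity first and then reduce the boundary statement to a single estimate on the harmonic part of a GFF decomposition near a boundary point that is far from the local set. For harmonicity, write $\phi:=\mathcal{C}_A-\mathcal{C}_{A_2}$. By Proposition~\ref{propn::localsetunions} the set $A=A_1\cup A_2$ is local and, given $(A_1,A_2,A,h|_A)$, one has $h=\mathcal{C}_A+\tilde h$ with $\tilde h$ an independent zero boundary $\GFF$ on $D\setminus A$; in particular $\mathcal{C}_A$ is a.s.\ harmonic on $D\setminus A$. Since $\mathcal{C}_{A_2}$ is likewise a.s.\ harmonic on $D\setminus A_2\supseteq D\setminus A$, the function $\phi$ is a.s.\ harmonic on $D\setminus A$.

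For the boundary behaviour, let $z_0$ be a limit of the type in the statement. In both cases $z_0\in A_2$ and there is a (random) $r>0$ with $\dist(z_0,A_1\setminus A_2)\ge r$, and if $z_0$ lies in a component of $A_2\setminus A_1$ then even $\dist(z_0,A_1)\ge r$. Because $A_1,A_2$ are connected and $z_0$ lies in a continuum with more than one point inside $A_2$, the point $z_0$ is a genuine boundary point of $\hat D:=D\setminus A_2$, with $\CR(z,\hat D)\to 0$ as $z\to z_0$ in $\hat D$. Now condition on $\mathcal{A}_2=(A_2,h|_{A_2})$: by the local set property $h=\mathcal{C}_{A_2}+\hat h$, where $\hat h$ is a zero boundary $\GFF$ on $\hat D$. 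Using the conditional independence of $A_1$ and $A_2$ given $h$ together with Proposition~\ref{propn::localsetunions}, one checks that, conditionally on $\mathcal{A}_2$, the set $B:=A_1\cap\hat D$ is local for $\hat h$ and that $\mathcal{C}_A=\mathcal{C}_{A_2}+\mathcal{C}^{\hat h}_B$ as harmonic functions on $D\setminus A=\hat D\setminus B$, where $\mathcal{C}^{\hat h}_B$ is the conditional expectation function of $\hat h$ given $(B,\hat h|_B)$; note $\dist(z_0,B)\ge r$. It therefore suffices to prove: if $\hat h$ is a zero boundary $\GFF$ on a domain $\hat D$ and $B$ is a local set, then $\mathcal{C}^{\hat h}_B(z)\to 0$ as $z\to z_0$ for every $z_0\in\partial\hat D$ with $\dist(z_0,B)>0$ and $\CR(z,\hat D)\to 0$ as $z\to z_0$.

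To prove this, write $\hat h=\mathcal{C}^{\hat h}_B+g$ given $(B,\hat h|_B)$, with $g$ a zero boundary $\GFF$ on $\hat D\setminus B$, and regularise by circle averages (a standard regularisation of the GFF). For $z$ at positive distance from $B\cup\partial\hat D$ one has $\hat h_\eps(z)=\mathcal{C}^{\hat h}_B(z)+g_\eps(z)$, where $\hat h_\eps(z)$ and $g_\eps(z)$ have mean zero and variances $-\log\eps+\log\CR(z,\hat D)$ and $-\log\eps+\log\CR(z,\hat D\setminus B)$; comparing means and conditional variances yields $\E[\mathcal{C}^{\hat h}_B(z)]=0$ and
\[
\E\bigl[\mathcal{C}^{\hat h}_B(z)^2\bigr]=\log\CR(z,\hat D)-\E\bigl[\log\CR(z,\hat D\setminus B)\bigr]\ \ge\ 0 .
\]
A Beurling-type estimate bounds the right-hand side by $C\,\PP_z[\text{Brownian motion from }z\text{ hits }B\text{ before }\partial\hat D]$, which tends to $0$ polynomially in $|z-z_0|$ as $z\to z_0$ since $\dist(z_0,B)\ge r$ and $z_0\in\partial\hat D$. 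Hence $\E[\mathcal{C}^{\hat h}_B(z)^2]\to 0$, and because $\mathcal{C}^{\hat h}_B$ is harmonic on $\hat D\setminus B$ this polynomial $L^2$-decay upgrades to $\mathcal{C}^{\hat h}_B(z)\to 0$ almost surely, by a standard argument summing the $L^2$ bound over dyadic neighbourhoods of $z_0$ and applying interior estimates for harmonic functions (one may also localise near $z_0$ using Proposition~\ref{prop::gffabscont}). Undoing the conditioning on $\mathcal{A}_2$ and taking a union over rational $r>0$ completes the proof.

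I expect the main obstacle to be the reduction step: verifying carefully that, conditionally on $(A_2,h|_{A_2})$, the set $A_1\cap\hat D$ is indeed a local set for $\hat h$ and that its conditional expectation function is exactly $\mathcal{C}_A-\mathcal{C}_{A_2}$, in the generic situation where $A_1$ and $A_2$ genuinely overlap. The Beurling estimate and the passage from $L^2$ to almost sure convergence are comparatively routine.
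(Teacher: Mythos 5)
The paper gives no argument of its own here (the proof is a citation of \cite[Lemma 3.11]{ss} and \cite[Proposition 3.6]{msig1}), and your outline reconstructs essentially the route those references take: condition on $\mathcal{A}_2$, use the union statement (Proposition \ref{propn::localsetunions}) to identify $\mathcal{C}_A-\mathcal{C}_{A_2}$ with the harmonic part $\mathcal{C}^{\hat h}_B$ of the zero boundary field $\hat h$ on $\hat D=D\setminus A_2$ relative to the leftover local set $B$, and then show that this harmonic part vanishes at ``thick'' boundary points of $\hat D$ lying at distance at least $r$ from $B$, via a Beurling estimate. Two remarks on the reduction. First, in the Beurling step what you really use is not ``$z_0\in\partial\hat D$'' but the continuum through $z_0$ (the component with more than one point); an isolated boundary point would not give the polynomial decay. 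Second, your reading of the hypothesis in the $A_1\cap A_2$ case, namely $\dist(z_0,A_1\setminus A_2)>0$, is the correct (and necessary) one: positive distance from $A_2\setminus A_1$ alone would not suffice. Indeed, take $D=\U$, $A_2=[-1/2,1/2]$ and $A_1=A_2\cup[0,i/2]$, both deterministic (hence local, connected, conditionally independent); then $A_2\setminus A_1=\emptyset$, but for $z=\delta e^{i\pi/4}$ the quantity $(\mathcal{C}_A-\mathcal{C}_{A_2})(z)$ is Gaussian with variance $\log\CR(z,\hat D)-\log\CR(z,\hat D\setminus B)\to\log\sqrt2\neq0$ as $\delta\to0$, so it cannot tend to zero. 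So the statement has to be read as requiring positive distance from $A_1\setminus A_2$ (equivalently from the symmetric difference), which is what your argument uses; it would have been worth saying this explicitly.

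The genuine gap is in the step you describe as comparatively routine. The proposition is an almost sure statement about \emph{all} sequences approaching \emph{any} point of the random sets in question; these limit points are uncountably many and random, so ``fix $z_0$, sum the $L^2$ bound over dyadic neighbourhoods, Borel--Cantelli'' is not available as stated: it only makes sense for a deterministic $z_0$, and even then it controls $\mathcal{C}^{\hat h}_B$ at countably many points unless supplemented by a uniform estimate. Interior (Harnack/gradient) estimates do not reach points whose distance to the rough random boundary $\partial\hat D$ is much smaller than the dyadic scale, and a.s.\ boundedness of $\mathcal{C}^{\hat h}_B$ up to $\partial\hat D$ is part of what must be proved, so it cannot be invoked to bridge that regime. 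Moreover, Chebyshev with the stated polynomial $L^2$ decay does not survive the union bound over nets whose cardinality grows polynomially at each scale; one should instead use that, conditionally on $(B,\hat h|_B)$, the value $\mathcal{C}^{\hat h}_B(z)$ is Gaussian with variance $\log\CR(z,\hat D)-\log\CR(z,\hat D\setminus B)$, exploit the resulting Gaussian tails, and run a dyadic/Whitney-type argument over a deterministic covering of the whole $r$-neighbourhood of the good boundary portion (on the event $\dist(\cdot,B)\ge r$, intersected over rational $r$), using harmonicity to pass from box centres to suprema over boxes. This uniformization is exactly where \cite[Lemma 3.11]{ss} does its real work, so as written your argument establishes the second-moment decay but not the almost sure boundary statement. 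The variance identity itself also needs a word (a limit in $\eps$ together with a truncation on the event that $B$ comes close to $z$), but that part is indeed routine.
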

\begin{proof}
\cite[Lemma 3.11]{ss} and \cite[Proposition 3.6]{msig1}.
\end{proof}

\begin{proposition}\label{propn::morelocalsets}
Let $A_1, A_2$ be connected local sets which are conditionally independent and $A=A_1\cup A_2$. Suppose that $C$ is a $\sigma(A_1)$-measurable connected component of $D\setminus A_1$ such that $\{C\cap A_2=\emptyset\}$ almost surely. Then $\mathcal{C}_A|_C=\mathcal{C}_{A_1}|_C$ almost surely, given $A_1$. 
\end{proposition}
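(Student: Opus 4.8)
The plan is to reduce the statement to the properties of $\mathcal{C}_A - \mathcal{C}_{A_1}$ established in Proposition \ref{propn::localsetunions} and Proposition \ref{propn::localsetaverages}. First I would record that, since $A_1$ and $A_2$ are conditionally independent connected local sets, $A = A_1 \cup A_2$ is local by Proposition \ref{propn::localsetunions}, and by Proposition \ref{propn::localsetaverages} (applied with the roles of $A_1$ and $A_2$ exchanged, so that we look at $\mathcal{C}_A - \mathcal{C}_{A_1}$ rather than $\mathcal{C}_A - \mathcal{C}_{A_2}$) the difference $\mathcal{C}_A - \mathcal{C}_{A_1}$ is almost surely a harmonic function on $D \setminus A$ that tends to zero along sequences converging to any connected component of $A_1 \setminus A_2$ that is larger than a singleton. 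The point is that on the event $\{C \cap A_2 = \emptyset\}$, all of the relevant boundary of $C$ as a component of $D \setminus A$ lies in $A_1 \setminus A_2$.

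Concretely, I would argue as follows. Condition on $A_1$ and restrict to the (almost sure) event $\{C \cap A_2 = \emptyset\}$. On this event $C$ is an open subset of $D \setminus A$, and in fact $C$ is a union of connected components of $D \setminus A$; moreover $\partial C \cap D \subset A_1$, and since $C$ does not meet $A_2$, in fact $\partial C \cap D \subset A_1 \setminus A_2$. Thus $h := \mathcal{C}_A - \mathcal{C}_{A_1}$ is harmonic on $C$ and, by Proposition \ref{propn::localsetaverages}, tends to zero along every sequence in $C$ approaching a point of $\partial C \cap D$ — provided that point lies in a connected component of $A_1 \setminus A_2$ larger than a singleton. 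The remaining subtlety is the treatment of singleton components of $A_1 \setminus A_2$ and of $\partial C \cap \partial D$; for the former, a bounded harmonic function on $C$ extending continuously by zero to $\partial C \cap D$ off an exceptional polar set (isolated points) still extends by zero, since isolated boundary points are removable for bounded harmonic functions, and for the latter one uses that both $\mathcal{C}_A$ and $\mathcal{C}_{A_1}$ carry the same boundary data inherited from $h$ on $\partial D$, so the difference vanishes there too. Hence $\mathcal{C}_A - \mathcal{C}_{A_1}$ is a bounded harmonic function on $C$ with boundary value identically zero, and the maximum principle forces $\mathcal{C}_A = \mathcal{C}_{A_1}$ on $C$, almost surely given $A_1$.

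The main obstacle I anticipate is making the boundary argument rigorous: Proposition \ref{propn::localsetaverages} only gives vanishing along sequences approaching components of $A_1 \setminus A_2$ that are \emph{larger than a singleton}, so one must handle the possibility that parts of $\partial C$ consist of singleton components (or points that are limits of $A_2$ from outside $C$). The clean way around this is to observe that the harmonic function $\mathcal{C}_A - \mathcal{C}_{A_1}$ is \emph{bounded} (indeed both conditional expectations are, on compact subsets, controlled by local absolute continuity with a zero-boundary GFF plus harmonic function, cf. Proposition \ref{prop::gffabscont}), so that any exceptional set of boundary points that is polar — in particular any countable set of isolated points — is removable, and the conclusion follows from the maximum principle applied on $C$. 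One should also note the argument is purely deterministic once one conditions on $A_1$ and works on the event $\{C \cap A_2 = \emptyset\}$, so no further measure-theoretic care beyond the conditional independence is needed.
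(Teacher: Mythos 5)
There is a genuine gap, and it lies exactly where you flagged the ``main obstacle''. The inference ``$C\cap A_2=\emptyset$, hence $\partial C\cap D\subset A_1\setminus A_2$'' is false: $C$ is open and $A_2$ is closed, so $A_2$ can perfectly well meet $\partial C\subset A_1$ while avoiding $C$ itself. In the extreme (but admissible) case $A_2\supseteq A_1$ with $A_2\cap C=\emptyset$, one has $A_1\setminus A_2=\emptyset$, every point of $\partial C\cap D$ lies in $A_1\cap A_2$ at zero distance from $A_2\setminus A_1$, and Proposition \ref{propn::localsetaverages} (even with the roles of $A_1$ and $A_2$ exchanged) gives you no boundary information whatsoever on $\partial C$ --- yet the conclusion of Proposition \ref{propn::morelocalsets} still holds. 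So the exceptional boundary set is in general neither countable nor polar, and the strategy ``harmonic difference with zero boundary values plus maximum principle'' cannot be repaired by a removable-singularity argument. Two further steps are also unjustified: $\mathcal{C}_A-\mathcal{C}_{A_1}$ is harmonic on $D\setminus A$ but is not known to be bounded up to $\partial C$ (conditional expectations of the field given local sets need not extend boundedly to the set itself, and Proposition \ref{prop::gffabscont} controls laws on compact subsets, not boundary behaviour of harmonic means), and the treatment of $\partial C\cap\partial D$ by ``both carry the same boundary data'' is not something the local-set formalism hands you.

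The proof the paper relies on (\cite[Proposition 3.7]{msig1}) is measure-theoretic rather than potential-theoretic, and this is the mechanism you are missing: conditionally on $\mathcal{A}_1=(A_1,h|_{A_1})$, the field decomposes as $\mathcal{C}_{A_1}$ plus \emph{independent} zero boundary GFFs in the connected components of $D\setminus A_1$. Passing from conditioning on $\mathcal{A}_1$ to conditioning on $\mathcal{A}=(A,h|_A)$ only adds information about $A_2$ and the field in an infinitesimal neighbourhood of $A_2$; since $A_2\cap C=\emptyset$ almost surely, the locality of $A_2$ (characterization (1) in the definition, applied to the open set $C$) shows this extra information is independent of the projection of the field onto $H(C)$ given $\mathcal{A}_1$. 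Hence the conditional mean on $C$ is unchanged, i.e. $\mathcal{C}_A|_C=\mathcal{C}_{A_1}|_C$. No boundary regularity of the harmonic functions enters at all. I would recommend rewriting the argument along these lines, or simply citing \cite{msig1} as the paper does.
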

\begin{proof}
\cite[Proposition 3.7]{msig1}.
\end{proof}
\begin{proposition}\label{localsetsprocess}
Let $h$ be a $\GFF$ and $(Z(t),t\ge 0)$ a family of closed sets such that $Z(\tau)$ is local for every $Z$-stopping time $\tau$. Suppose futher that for a fixed $z\in D$, $\CR(z,D\setminus Z(t))$ is almost surly continuous and monotonic in $t$. Then, if we reparameterise time by 
$$\log \CR(z,D\setminus Z(0))-\log \CR(z,D\setminus Z(t)), $$
the process $\mathcal{C}_{Z(t)}(z)-\mathcal{C}_{Z(0)}(z)$ has a modification which is Brownian motion until the first time that $Z(t)$ accumulates at $z$. In particular, $\mathcal{C}_{Z(t)}(z)$ has a modification which is almost surely continuous in $t$. 
\end{proposition}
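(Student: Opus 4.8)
The plan is to combine the local set structure with the circle-average martingale property of the GFF. Fix $z \in D$ and let $\epsilon > 0$ be small. For a local set $A$ with $\CR(z, D\setminus A)$ bounded below, the Koebe quarter theorem gives $B(z, 2\epsilon) \subset D \setminus A$ on the relevant event, and one knows (as recorded in the commented-out Lemma on conditional expectations) that the conditional law of $h_\epsilon(z) - \mathcal{C}_A(z)$ given $(A, h|_A)$ is Gaussian with mean $0$ and variance $-\log\epsilon + \log\CR(z, D\setminus A)$. I would first set this up carefully for the stopped sets $Z(\tau)$: for each $Z$-stopping time $\tau$, $Z(\tau)$ is local, so $\mathcal{C}_{Z(\tau)}(z)$ is well-defined and, away from the event that $Z(\tau)$ has swallowed $z$, equals the conditional mean of the field near $z$.

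Next I would pass to the time-changed process. Let $\sigma(t) = \log \CR(z, D\setminus Z(0)) - \log \CR(z, D \setminus Z(t))$; by hypothesis this is a.s. continuous and monotone in $t$, so it is a legitimate (random) reparametrization, and I would work with $M_u := \mathcal{C}_{Z(t(u))}(z) - \mathcal{C}_{Z(0)}(z)$ where $t(u)$ is the (generalized) inverse of $\sigma$, run until $Z$ accumulates at $z$. The key claim is that $M_u$ is a continuous local martingale in $u$ with $\langle M \rangle_u = u$, whence Lévy's characterization gives that it has a modification which is standard Brownian motion. To prove the martingale and quadratic-variation statements I would: (i) fix stopping times $u_1 < u_2$ corresponding to $Z$-stopping times $\tau_1 < \tau_2$, and use that $Z(\tau_1) \subset Z(\tau_2)$ are nested local sets together with Propositions \ref{propn::localsetunions}–\ref{propn::localsetaverages} (applied with $A_1 = Z(\tau_1)$, $A_2 = Z(\tau_2)$, so that conditionally on $(Z(\tau_1), h|_{Z(\tau_1)})$ the set $Z(\tau_2)$ is again a local set of the GFF $h \circ (\text{restriction}) + \mathcal{C}_{Z(\tau_1)}$ in $D \setminus Z(\tau_1)$) to reduce to the one-step increment; (ii) for the one-step increment, use the circle-average identity above: conditionally on $Z(\tau_1)$, the random variable $\mathcal{C}_{Z(\tau_2)}(z) - \mathcal{C}_{Z(\tau_1)}(z)$ is, by writing both as the difference of $h_\epsilon(z)$ from something independent, a centered Gaussian with variance exactly $\log \CR(z, D\setminus Z(\tau_2)) - \log \CR(z, D\setminus Z(\tau_1)) = \sigma(\tau_2) - \sigma(\tau_1) = u_2 - u_1$. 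Centeredness gives the martingale property; the variance being exactly the time increment gives $\langle M\rangle_u = u$.

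The remaining point is continuity: a priori $M$ is only defined as a conditional-expectation process indexed by (random) time, and one must produce an honest continuous modification. Here I would invoke the fact that a process with the conditional-increment law established above — Gaussian, independent increments after time change, variance equal to elapsed (reparametrized) time — is precisely a Brownian motion in the filtration generated by $(Z(t), h|_{Z(t)})$, so Kolmogorov's continuity criterion (or directly Lévy's theorem once one checks $M$ is adapted and the increments are as claimed for all stopping times, which upgrades to the full martingale statement via the usual approximation of general times by stopping times) yields the continuous modification; transporting back through the continuous monotone map $t \mapsto \sigma(t)$ gives almost sure continuity of $\mathcal{C}_{Z(t)}(z)$ in $t$.

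The main obstacle I anticipate is step (i)–(ii) done with full rigor: making precise that conditionally on $(Z(\tau_1), h|_{Z(\tau_1)})$ the family $(Z(t))_{t \ge \tau_1}$ is still a family of local sets for the (now non-zero-boundary) GFF on $D \setminus Z(\tau_1)$, so that the single-step Gaussian computation applies iteratively; and handling the stopping time $Z$ accumulates at $z$ so that $\CR(z, D\setminus Z(t))$ stays positive on the relevant event, allowing the Koebe-theorem choice of $\epsilon$ to be made uniformly on compact time intervals. The circle-average variance computation itself, and Lévy's characterization, are then routine.
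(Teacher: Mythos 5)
Your proposal is correct and follows essentially the same route as the paper: after the deterministic time change you show that each increment of $\mathcal{C}_{Z(\tau(t))}(z)$, conditionally on $(Z(\tau(s)),h|_{Z(\tau(s))})$, is a centered Gaussian with variance $t-s$ (your circle-average computation, modulo the sign slip in writing the variance, is exactly this), which is precisely the input the paper imports from \cite[Lemma 6.4]{msig1}, and from there independence and stationarity of the Gaussian increments yield the Brownian modification just as in the paper. The one caveat is that the nested-set reduction you sketch is not supplied by Propositions \ref{propn::localsetunions}--\ref{propn::localsetaverages}, which concern conditionally independent local sets; the statement that, given $(Z(\tau_1),h|_{Z(\tau_1)})$, the later sets of the growing family remain local for the conditional field in $D\setminus Z(\tau_1)$ is exactly the content of the cited Miller--Sheffield lemma, which the paper invokes rather than reproves.
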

\begin{proof}
This is proved in \cite[Proposition 6.5]{msig1}. Since we need the argument in the proof later, we briefly recall the proof here. 
For $s\ge 0$, set 
\[\tau(s):=\inf\{t\geq 0: \log \CR(z,D\setminus Z(0))-\log \CR(z,D\setminus Z(t))=s\}.\] 
We need only show that the increments of the process $\mathcal{C}_{Z(\tau(t))}(z)$ are independent, and stationary with Gaussian distribution. 
By \cite[Lemma 6.4]{msig1}, we know that for any $s<t$, the conditional law of 
\[\mathcal{C}_{Z(\tau(t))}(z)-\mathcal{C}_{Z(\tau(s))}(z),\] 
given $(Z(\tau(s)),h|_{Z(\tau(s))})$, is a Gaussian with mean $0$ and variance 
\[\log\CR(z, D\setminus Z(\tau(s)))-\log\CR(z, D\setminus Z(\tau(t)))=t-s.\] 
This means it must also be independent of $(Z(\tau(s)),h|_{Z(\tau(s))})$, and so of $\mathcal{C}_{Z(\tau(s))}(z)$. This completes the proof. 
\end{proof}

\subsection{$\SLE_{\kappa}(\rho)$ processes}\label{subsec::slerho}
We call a compact set $K\subset\HH$ an $\HH$-hull if $H:=
 \HH\setminus K$ is simply connected. For any such hull one can show that there exists a unique conformal map $\phi$ from $H \to \HH$ which is normalized at $\infty$ in the sense that 
 \[ \phi(z)=z+\frac{2a}{z} + o(\frac{1}{z}), \quad \text{as} \; z\to \infty,\] for some constant $a$ which we call the \emph{half-plane capacity} of $K$. 
For a continuous real-valued function $(W_t, t\geq 0)$ with $W_0=0$ we can define the solution $g_t(z)$ to the \emph{chordal Loewner equation} 
 \[\partial_t g_t(z) = \frac{2}{g_t(z)-W_t}, \quad g_0(z)=z.\]
 This is well defined for each $z \in \HH$ until the first time, $\tau(z)$, that $g_t(z)-W_t$ hits 0. Setting $K_t = \{ z\in \overline{\HH}: \tau(z)\leq t\}$ and $H_t=\HH\setminus K_t$ we find that $g_t$ is the conformal map from $H_t$ to $\HH$ normalized at $\infty$, and the half-plane capacity of $K_t$ is equal to $2t$. 
 We call the family $(K_t, t\geq 0)$ the \emph{Loewner chain driven by} $(W_t, t\geq 0)$. One class of Loewner chains that we will be particularly interested in are those generated by continuous curves; that is, those for which there exists a continuous curve $\gamma$ such that $K_t$ is the hull generated by $\gamma[0,t]$ for all $t$. 

Chordal $\SLE_\kappa$ is the Loewner chain driven by $W_t=\sqrt{\kappa} B_t$, where $B_t$ is a standard one-dimensional Brownian motion. It is characterised by the special properties of conformal invariance and the domain Markov property. Specifically, $(\mu^{-1} K_{\mu^2t}, t\geq 0)$ has the same law as $(K_t, t\geq 0)$ for any $\mu>0$, and for any stopping time $\tau$, the law of $(f_{\tau}(K_{t+\tau}), t\geq 0)$ is the same as that of $K$. Here $f_\tau:=g_\tau - W_\tau$. 

It is known that $\SLE_\kappa$ is almost surely generated by a continuous curve for all $\kappa$. In the special case $\kappa\in [0,4]$, it has also been shown that the curve is almost surely simple. Moreover we know that $\lim_{t\to \infty} \gamma(t)=\infty$ almost surely; a property we refer to as \emph{transience}. These facts were all proved in \cite{rs}.

\begin{definition}
\label{defn::slekapparho}
Let $\rho^L$ and $\rho^R$ be finite Radon measures on $\R_-=(-\infty,0]$ and $\R_+=[0,\infty)$ respectively, and $(B_t,t\ge 0)$ be a standard one-dimensional Brownian motion. We say that $\left(W_t, (V_t^L(x))_{x\in\R_-}, (V_t^R(x))_{x\in \R_+}\right)_{t\geq 0}$ describe an $\SLE_\kappa(\rho^L;\rho^R)$ process, if they are adapted to the filtration of $B$ and the following hold: 
\begin{enumerate}
\item [(1)] The processes $W_t$, $B_t$, $\left(V_t^{L}(x)\right)_{x\in \R_-}$ and $\left(V_t^{R}(x)\right)_{x\in\R^+}$ satisfy the following SDE on time intervals where $W_t$ does not collide with any of the $V_t^{L,R}(x)$:
\begin{equation}
\label{wtsde}
dW_t  =  \sqrt{\kappa} dB_t + \left( \int_{\R_-} \frac{\rho^L(dx)}{W_t-V_t^L(x)}  \right) dt +   \left( \int_{\R_+} \frac{\rho^R(dx)}{W_t-V_t^R(x)}   \right) dt
\end{equation}
and
\begin{equation}
\label{vtxsde}
dV_t^L(x) =  \frac{2 dt}{V_t^L(x)-W_t}, \quad x\in\R_-;\quad 
dV_t^R(x) = \frac{2 dt}{V_t^R(x)-W_t}, \quad x\in\R_+.
\end{equation}
\item [(2)] We have instantaneous reflection of $W_t$ off the $V_t^{L,R}(x)$, ie. it is almost surely the case that for Lebesgue almost all times $t$ we have that $W_t\ne V_t^{L,R}(x)$ for each $x\in \R$.
\end{enumerate}
The $\SLE_\kappa(\rho^L;\rho^R)$ process is then defined to be the Loewner chain driven by $W$.
\end{definition}

\begin{remark}
Note that it is not immediate from the definition that such a process exists. Indeed, we will only show the existence for $\kappa=4$ and a specific subset of $(\rho^L;\rho^R)$.
\end{remark} 

We define the \emph{continuation threshold} of the process to be the be the infinum of values of $t$ for which 
\[ \text{either}\quad \rho^L\left( \{x\in \R_-: V_t^L(x)=W_t\} \right) \leq -2 \quad \text{or}\quad  \rho^R \left(\{x\in \R^+: V_t^R(x)=W_t\} \right) \leq -2.\]

Observe that the case $\rho^L \equiv 0, \rho^R \equiv 0$ corresponds simply to $\SLE_\kappa$. Another special case is when the Radon measures are purely atomic. If this occurs we instead consider $(\rho^L; \rho^R)$ to be a pair of vectors  \[\underline{\rho}^L=(\rho^L_l, \cdots, \rho^L_1),\quad \underline{\rho}^R=(\rho_1^R, \cdots, \rho_r^R)\]
with associated force points 
\[\underline{x}^L=(x^L_l<\cdots<x^L_1\leq 0),\quad \underline{x}^R=(0\leq x^R_1<\cdots x^R_r) \]
in the obvious way. In this case, it is proved in \cite[Theorem 2.2]{msig1} that a slightly stronger version of Definition \ref{defn::slekapparho} determines a unique law on $\SLE_\kappa(\underline{\rho}^L; \underline{\rho}^R)$ processes, defined for all time up until the continuation threshold. The additional condition they impose is that $W_t$, $B_t$, $\left(V_t^{L}(x)\right)_{x\in \R_-}$ and $\left(V_t^{R}(x)\right)_{x\in\R^+}$ in fact must satisfy (\ref{wtsde}) and (\ref{vtxsde}) at all times. This ensures the uniqueness in law of these processes.


Through their connection with the GFF, which we will discuss in the next section, it was shown in \cite{msig1} that $\SLE_\kappa(\underline{\rho}^L; \underline{\rho}^R)$ processes are almost surely generated by continuous curves up to and including the continuation threshold. Moreover, on the event that the continuation threshold is not hit before the curves reach $\infty$, the curves are almost surely transient. One can also show that the curves are absolutely continuous with respect to $\SLE_\kappa$  as long as they are away from the boundary. 

\subsection{Level lines of the GFF with piecewise constant boundary data}\label{subsec::levelline_piecewiseconstant}
As discussed in the introduction, the theory of level lines and flow lines  of a GFF with piecewise constant boundary data has been studied previously in a number of works, including \cite{dub}, \cite{msig1},\cite{ss} and \cite{wwll1}. We collect in this section some results that will be useful in our article.  

Suppose that $F$ is a bounded harmonic function in $\HH$ whose boundary value is piecewise constant on $\R$ and changes only finitely many times. Then $F$ can be described almost everywhere in terms of a pair of purely atomic finite Radon measures $(\rho^L; \rho^R)$, corresponding to vectors $(\underline{\rho}^L; \underline{\rho}^R)$, via the relation 
\begin{equation} \label{eqn::frho} 
F(x)=\lambda(1+\rho^R([0,x])),\quad  x\ge 0;\quad 
F(x)=-\lambda(1+\rho^L((x,0])), \quad x<0.
\end{equation}
When $\kappa=4$, which corresponds to level lines of the GFF, the following results are known for any $(\underline{\rho}^L;\underline{\rho}^R)$: (see \cite[Theorems 1.1.1 and 1.1.2]{wwll1})
\begin{itemize}
\item There exists a coupling $(K,h)$ where $K$ is an $\SLE_4(\underline{\rho}^L; \underline{\rho}^R)$ process and $h$ is a zero boundary GFF, such that $K$ is a level line of $h+F$.
\item If $h$ is a zero boundary GFF and $K$ an $\SLE_4(\underline{\rho}^L; \underline{\rho}^R)$ process, coupled such that $K$ is a level line of $h+F$, then $K$ is almost surely determined by $h$.
\end{itemize}

This allows us, for any such $F$ and an instance of the zero boundary GFF $h$ in $\HH$, to define \emph{the} level line, $\gamma$, of $h+F$. It has been shown in \cite[Theorem 1.1.3]{wwll1} that $\gamma$ is in fact almost surely continuous up to and including the continuation threshold, and it is transient when the continuation threshold is not hit. 

More generally, for any simply connected domain $D$ and $x,y$ in $\partial D$, we say that $\gamma$ is the level line of a GFF $h$ in $D$ started at $x$ and targeted at $y$, if $\phi(\gamma)$ is the level line of $h\circ \phi^{-1}$, where $\phi$ is any conformal map from $D$ to $\HH$ which sends $x$ to 0 and $y$ to $\infty$. 

One nice property of the level lines is what we call \emph{monotonicity}. Suppose that $h$ is a $\GFF$ with piecewise constant boundary values, changing only finitely many times. For $u\in\R$, we define the level line of $h$ with height $u$ to be the level line of $h+u$, and denote it by $\gamma_u$. Then, for any $u_1\ge u_2$, the level line $\gamma_{u_1}$ lies to the left of $\gamma_{u_2}$ almost surely, see \cite[Theorem 1.1.4]{wwll1}.

Another property of the level lines is their \emph{reversibility}. Suppose that $h$ is a $\GFF$ with piecewise constant boundary values changing only finitely many times. Let $\gamma$ be the level line of $h$ from $0$ to $\infty$ and $\gamma'$ be the level line of $-h$ from $\infty$ to $0$. Then, on the event that neither hit their continuation thresholds before reaching their target points, we have $\gamma=\gamma'$ almost surely as sets. This implies the \emph{reversibility} of the $\SLE_4(\underline{\rho}^R; \underline{\rho}^R)$ process: conditioned on the event that the continuation threshold is not hit, the time reversal of the process is another $\SLE_4(\underline{\rho}^L; \underline{\rho}^R)$ process, now from $\infty$ to $0$ in $\HH$ with appropriate weights and force points, conditioned not to hit its continuation threshold. See \cite[Theorem 1.1.6]{wwll1}.
Finally, we include a list of results from \cite{wwll1} that will be useful for the later proofs.
\begin{lemma}\label{lem::results_piecewiseconstant}
Suppose that $h$ is a zero-boundary $\GFF$ and $F$ is the bounded harmonic extension of the piecewise constant boundary data which changes finitely many times. Let $\gamma$ be the level line of $h+F$. We already know that $\gamma$ is almost surely continuous up to and including the continuation threshold.  
\begin{enumerate}
\item [(1)] \cite[Theorem 1.1.3]{wwll1} The curve $\gamma$ is almost surely simple and is continuous up to and including the continuation threshold.
\item [(2)] \cite[Remark 2.5.15]{wwll1} For any open interval $I$ of $(-\infty, 0)\cup (0,\infty)$, assume that 
\[\text{either } F(x)\ge \lambda,\quad \forall x\in I,\quad \text{or } F(x)\le -\lambda, \quad \forall x\in I.\]
Then almost surely $\gamma\cap I=\emptyset$.
\item [(3)] \cite[Proposition 2.5.11]{wwll1} For any point $x_0\in (0,\infty)$, assume that there exists $c>0$ such that $F\ge -\lambda+c$ in a neighborhood of $\{x_0\}$, then almost surely $\gamma$ does not hit $\{x_0\}$. Symmetrically, for $x_0\in (-\infty, 0)$, assume that there exists $c>0$ such that $F\le \lambda-c$ in a neighborhood of $\{x_0\}$, then almost surely $\gamma$ does not hit $\{x_0\}$.
\end{enumerate}
\end{lemma}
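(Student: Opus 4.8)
The plan is direct: each of the three items is, essentially verbatim, a result of \cite{wwll1} (a theorem, a remark, and a proposition, respectively), so the proof consists of invoking those references. I nevertheless recall the underlying mechanisms, as they recur in our general setting.

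For item (1) I would start from the coupling $(K,h)$, with $K$ an $\SLE_4(\underline{\rho}^L;\underline{\rho}^R)$ Loewner chain and $h$ a zero-boundary $\GFF$, produced as in \cite{msig1}, and then upgrade $K$ to a continuous simple transient curve. Away from $\R$ this is immediate from Proposition \ref{prop::gffabscont}: there the chain is locally absolutely continuous with respect to chordal $\SLE_4$, which by \cite{rs} is a continuous simple transient curve. The work lies at the times when the chain approaches $\R$, and here one uses crucially that the boundary data is piecewise constant and changes only finitely often: up to the continuation threshold there are only finitely many ``phases'', in each of which the relevant boundary force point is not absorbed, and an induction over these phases together with estimates on the Loewner flow near a force point propagates continuity, simpleness, and (off the continuation threshold) transience. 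This is \cite[Theorem 1.1.3]{wwll1}.

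For items (2) and (3) the essential input is that $\gamma$ is a local set for $h+F$, together with the structure of its conditional mean: on conditioning on $\gamma[0,t]$, the field has boundary value $\lambda$ immediately to the left of the tip of $\gamma$ and $-\lambda$ immediately to its right, as in Definition \ref{def::gff_levelline}. For item (3), one argues by contradiction: if $\gamma$ hit a point $x_0\in(0,\infty)$ with $F\ge-\lambda+c$ on a neighbourhood $U$ of $x_0$, then near the hitting time the harmonic function $\mathcal{C}_{\gamma[0,t]}$ would have to match a boundary value $\ge-\lambda+c$ on $\R_+\cap U$ against the value $-\lambda$ on the abutting side of $\gamma$ near its tip — a definite jump. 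Tracking $\mathcal{C}_{\gamma[0,t]}(z)$ at a fixed nearby point $z$ via Proposition \ref{localsetsprocess}, where it evolves (after time change) as a Brownian motion and hence converges as $\gamma$ is grown, one rules this out; equivalently, after the absolute continuity of Proposition \ref{prop::gffabscont} one compares the Loewner drift of $W_t-V^R_t(x_0)$ with that of a single-force-point process and reads off that $x_0$ is repelled whenever $F$ stays bounded above $-\lambda$ near it. The $\R_-$ case is symmetric (or follows by replacing $h$ with $-h$), giving \cite[Proposition 2.5.11]{wwll1}. Item (2) then follows: if $F\ge\lambda$ on an interval $I\subset\R_+$ or $F\le-\lambda$ on $I\subset\R_-$, apply item (3) at a countable dense subset of $I$ and use continuity of $\gamma$; and if $F\le-\lambda$ on $I\subset\R_+$ or $F\ge\lambda$ on $I\subset\R_-$, the first time $\gamma$ touched $\overline{I}$ would be a continuation threshold, so $\gamma$ stops there and never enters $I$. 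The step I expect to be the real obstacle — and the reason it is worth quoting \cite{wwll1} wholesale rather than redoing it — is the boundary analysis behind item (1), and implicitly behind (2)--(3): with several force points the difference processes $W_t-V^{L,R}_t(x)$ are no longer exact Bessel processes, so controlling the Loewner flow near $\R$ requires a delicate comparison via absolute continuity together with monotonicity in the weights, all of which is carried out in \cite{wwll1}.
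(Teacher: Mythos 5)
The paper gives no proof of this lemma at all: it is stated purely as a citation of \cite[Theorem 1.1.3]{wwll1}, \cite[Remark 2.5.15]{wwll1} and \cite[Proposition 2.5.11]{wwll1}, so your primary move --- invoking those references --- coincides exactly with what the paper does, and your sketches of the mechanisms behind items (1) and (3) are consistent in spirit with the arguments carried out there.

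However, your claimed derivation of item (2) from item (3) is genuinely incorrect, and the error is worth naming because it is precisely the reason the two items carry different thresholds ($\lambda$ versus $-\lambda+c$) and are cited as separate results. Almost sure avoidance of each point of a countable dense subset of $I$, even combined with continuity of $\gamma$, does not yield almost sure avoidance of $I$: the hitting point, if there is one, is random, and $\{\gamma\cap I\neq\emptyset\}$ is not the countable union of the events $\{x_0\in\gamma\}$. Indeed, your argument would ``prove'' that $\gamma\cap I=\emptyset$ whenever $F\ge -\lambda+c$ on $I$, which is false: for boundary data in $[-\lambda+c,\lambda)$ on an interval of $\R_+$ (accumulated weight $\rho\in(-2,0)$ for $\kappa=4$), the level line avoids every fixed point of the interval almost surely yet hits the interval with positive probability. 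The content of item (2) is the interval non-hitting criterion $\rho\ge\kappa/2-2=0$, i.e.\ $F\ge\lambda$, an estimate about the interval itself and not a consequence of the fixed-point estimate. Similarly, your treatment of the sub-case $F\le-\lambda$ on $I\subset\R_+$ via the continuation threshold is only heuristic: one must still rule out that the terminal point of the stopped curve lies in the open interval $I$, which again is exactly what the cited remark supplies. So as a citation the proposal matches the paper; read as a self-contained argument, the reduction of (2) to (3) is a genuine gap.
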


\subsection{First generalizations to the GFF with general boundary data}
In this section, we generalize some results concerning level lines with piecewise constant boundary data to general boundary data. In fact, the ideas in the proof for Lemma \ref{lem::coupling_iff_martingale} when the boundary condition is piecewise constant \cite[Lemmas 2.4-2.6]{ss} work for general boundary data with proper adjustment. In order to be self-contained, we still give a complete proof here.

\begin{lemma}
\label{lem::cr_evolution}
Suppose that $(K_t, t\ge 0)$ is a Loewner chain driven by a continuous process $(W_t, t\ge 0)$. Denote by $(g_t, t\ge 0)$ the corresponding sequence of conformal maps and $f_t=g_t-W_t$ the centered conformal maps. For any fixed $z\in \HH$, define
\[C_t(z)=\log\CR(z,\HH)-\log\CR(z,\HH\setminus K_t).\]
Then, we have that 
\[dC_t(z)=\frac{4\Im(f_t(z))^2}{|f_t(z)|^4} dt.\]
\end{lemma}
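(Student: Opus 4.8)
The plan is to compute the time derivative of $\log \CR(z, \HH \setminus K_t)$ directly from the Loewner flow, using the fact that conformal radius transforms in a known way under conformal maps. First I would recall that for a simply connected domain $H_t = \HH \setminus K_t$ with normalized conformal map $g_t : H_t \to \HH$, one has, for $z \in H_t$,
\[
\CR(z, \HH \setminus K_t) = \frac{\Im(g_t(z))}{|g_t'(z)|} = \frac{\Im(f_t(z))}{|f_t'(z)|},
\]
since $\CR(w, \HH) = 2\Im(w)$ for $w \in \HH$, conformal radius is covariant under conformal maps with the factor $|\phi'|^{-1}$, and $f_t = g_t - W_t$ has $f_t' = g_t'$ and $\Im f_t = \Im g_t$. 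Hence
\[
C_t(z) = \log\CR(z,\HH) - \log\CR(z,\HH\setminus K_t) = \log\bigl(2\Im(z)\bigr) - \log \Im(f_t(z)) + \log|f_t'(z)|.
\]
The first term is constant in $t$, so it suffices to differentiate the last two terms.

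Next I would differentiate the Loewner ODE. From $\partial_t g_t(z) = 2/(g_t(z) - W_t) = 2/f_t(z)$ and $\partial_t W_t$ being the driving function, we get $\partial_t f_t(z) = 2/f_t(z) - \dot W_t$, and differentiating in $z$, $\partial_t f_t'(z) = -2 f_t'(z)/f_t(z)^2$. Therefore
\[
\partial_t \log|f_t'(z)| = \Re\left(\frac{\partial_t f_t'(z)}{f_t'(z)}\right) = \Re\left(\frac{-2}{f_t(z)^2}\right),
\]
and
\[
\partial_t \log\Im(f_t(z)) = \frac{\Im(\partial_t f_t(z))}{\Im(f_t(z))} = \frac{1}{\Im(f_t(z))}\Im\left(\frac{2}{f_t(z)} - \dot W_t\right) = \frac{-2\,\Im(f_t(z))}{\Im(f_t(z))\,|f_t(z)|^2} = \frac{-2}{|f_t(z)|^2},
\]
using that $\Im(1/w) = -\Im(w)/|w|^2$ and that $\dot W_t$ is real (so contributes nothing to the imaginary part). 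Subtracting,
\[
dC_t(z) = -\,\partial_t \log\Im(f_t(z))\,dt + \partial_t\log|f_t'(z)|\,dt = \left(\frac{2}{|f_t(z)|^2} + \Re\left(\frac{-2}{f_t(z)^2}\right)\right)dt.
\]
It remains to simplify the bracket: writing $f_t(z) = a + ib$ with $b = \Im(f_t(z)) > 0$, we have $2/|f_t|^2 = 2/(a^2+b^2)$ and $\Re(-2/f_t^2) = -2(a^2-b^2)/(a^2+b^2)^2$, so the sum is $\bigl(2(a^2+b^2) - 2(a^2-b^2)\bigr)/(a^2+b^2)^2 = 4b^2/(a^2+b^2)^2 = 4\,\Im(f_t(z))^2/|f_t(z)|^4$, which is the claimed formula.

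I do not anticipate a genuine obstacle here; the only points requiring a little care are (i) justifying the conformal-radius identity $\CR(z, \HH\setminus K_t) = \Im(f_t(z))/|f_t'(z)|$ — which follows from $\CR(w,\HH) = 2\Im w$ together with the transformation rule $\CR(z, D) = |\phi'(z)|^{-1}\CR(\phi(z), \phi(D))$ for $\phi : D \to \phi(D)$ conformal — and (ii) checking that the differentiation of $g_t'(z)$ in $t$ is legitimate, which is standard for the Loewner flow away from the swallowing time of $z$ (so the identity holds for $t < \tau(z)$, which is all that is needed since $C_t(z)$ is only considered while $z \notin K_t$). Note also that the driving term $\dot W_t$ drops out entirely because conformal radius is translation-invariant, which is why the formula involves only the deterministic Loewner drift $2/f_t$; in particular the statement as written is a deterministic identity valid for any continuous driving function, with no stochastic (Itô) correction needed.
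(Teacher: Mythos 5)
Your proposal is correct and follows essentially the same route as the paper: both reduce to the identity $\CR(z,\HH\setminus K_t)=2\Im(f_t(z))/|g_t'(z)|$ (the paper derives it via an explicit M\"obius map sending $f_t(z)$ to $i$, you via covariance of the conformal radius under $g_t$) and then differentiate $\log\Im f_t(z)$ and $\log|g_t'(z)|$ using the Loewner ODE. The only blemishes are cosmetic: you dropped the constant factor $2$ in the conformal radius (harmless, since only the $t$-derivative matters), and the intermediate expression $\partial_t f_t(z)=2/f_t(z)-\dot W_t$ is not legitimate for a merely continuous driving function --- one should instead differentiate $\Im f_t(z)=\Im g_t(z)$ and $|f_t'(z)|=|g_t'(z)|$ directly, which depend on $t$ only through $g_t$, exactly because the real translation by $W_t$ drops out as you observe.
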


\begin{proof}
The conformal radius $\CR(z, \HH\setminus K_t)$ is equal to $2/|\phi_t'(z)|$ for $\phi_t$ any conformal map from $\HH \setminus K_t$ to $\mathbb{H}$ which sends $z$ to $i$, an example of which is given by $m_t \circ f_t$, where $m_t: \HH \to \HH$ is the M\"{o}bius transformation defined by 
$$m_t(w)= \frac{\Im(f_t(z))w}{\Re(f_t(z))^2 + \Im(f_t(z))^2 -\Re(f_t(z))w}.$$ This gives us that 
\[C_t(z)-C_0(z)=-\log 2 + \Re(\log m_t'(f_t(z))) + \Re(\log g_t'(z)).\] However, in this case we can calculate $m_t'(f_t(z))$ explicitly, and find that $-\Re(\log m_t'(f_t(z)))=\log \Im f_t(z).$ Since we also know that 
\[dg_t'(z)=\frac{-2g_t'(z)}{f_t(z)^2}dt, \quad d\Im(f_t(z))=\frac{-2\Im(f_t(z))}{|f_t(z)|^4} dt,\] we can compute 
\[dC_t(z)=\frac{4\Im(f_t(z))^2}{|f_t(z)|^4} dt,\] and this implies the result.
\end{proof}

\begin{lemma}\label{lem::coupling_iff_martingale}
Assume the same notations as in Definition \ref{def::gff_levelline}. 
Suppose that the Loewner chain $K$ is almost surely generated by a random continuous curve $\gamma$ on $\overline{\HH}$ from 0 to $\infty$ whose driving function $W$ is almost surely continuous. For $z\in\HH$ and $t\ge 0$, set
\[\tau(t)=\inf\{s: \log\CR(z, \HH)-\log\CR(z, \HH\setminus K_s)=t\}.\]
Then the pair $(h, K)$ can be coupled as in Definition \ref{def::gff_levelline} if and only if $(\eta_{\tau(t)}(z), t\ge 0)$ is a Brownian motion with respect to the filtration generated by $(W_{\tau(t)}, t\geq 0)$ for any $z\in\HH$.
\end{lemma}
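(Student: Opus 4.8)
The plan is to follow the strategy of the corresponding statement for piecewise constant boundary data in \cite{ss} (Lemmas 2.4--2.6 there), adapted to regulated $F$. The key point is that the domain Markov property in Definition \ref{def::gff_levelline} is really a statement about the conditional mean $\eta_\tau$ of the field given $K_\tau$, together with the conditional law being a zero boundary GFF on the complement; and that this, in turn, is equivalent to a martingale property for the harmonic observable $z\mapsto\eta_t(z)$ tested against smooth test functions. So the heart of the matter is to show that the martingale property of $(\eta_{\tau(t)}(z),t\ge0)$ for each fixed $z\in\HH$ is equivalent to the full coupling.

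\medbreak
\noindent\textbf{Proof plan.}
First I would treat the forward direction. Assume $(h,K)$ is coupled as in Definition \ref{def::gff_levelline}. Fix $z\in\HH$ and let $\rho_\epsilon^z$ denote the uniform measure on $\partial B(z,\epsilon)$, so that the circle average $h_\epsilon(z)=(h,\xi_\epsilon^z)_\nabla$ is defined for small $\epsilon$. By the domain Markov property, for any $K$-stopping time $\tau$ with $\tau<\tau(z)$, the conditional law of $h_\epsilon(z)$ given $K_\tau$ is Gaussian with mean $\eta_\tau(z)$ and a variance depending only on $K_\tau$ through $\CR(z,\HH\setminus K_\tau)$ (via the Green's function computation recalled in Section \ref{gffprelims}). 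Hence $(\eta_\tau(z))$ is a bounded martingale in $\tau$, and the optional stopping/tower argument shows that the process $t\mapsto\eta_{\tau(t)}(z)$ is a continuous martingale; its quadratic variation can be read off from the conditional variance formula, which by Lemma \ref{lem::cr_evolution} and the time change $\tau(t)$ is exactly $t$. By L\'evy's characterisation, $(\eta_{\tau(t)}(z),t\ge0)$ is a standard Brownian motion with respect to the filtration of $(W_{\tau(t)},t\ge0)$; here one uses that $\eta_t(z)$ is a function of $W$ up to time $t$ (through $f_t$ and the boundary data, which is $L^1$ and hence its harmonic extension along the flow is well defined) and that $\tau(t)$ is a stopping time for that filtration by Lemma \ref{lem::cr_evolution}.

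\medbreak
For the converse, suppose that $(\eta_{\tau(t)}(z),t\ge0)$ is a Brownian motion w.r.t.\ the filtration of $(W_{\tau(t)})$ for every $z\in\HH$. Undoing the time change, $(\eta_t(z))$ is a continuous local martingale in $t$ for each fixed $z$, with $d\langle\eta_\cdot(z)\rangle_t=dC_t(z)=4(\Im(1/f_t(z)))^2\,dt$ by Lemma \ref{lem::cr_evolution}; polarising over pairs $z,w$ (Brownian motions for $z$, $w$ and $z$--scaled combinations) one gets the cross-variations $d\langle\eta_\cdot(z),\eta_\cdot(w)\rangle_t=\Im(1/f_t(z))\,\Im(1/f_t(w))\,dt=-dG_t(z,w)$ where $G_t(z,w)=G_\HH(f_t(z),f_t(w))$ is the Green's function of $\HH\setminus K_t$, which is nondecreasing in $t$ and has the correct $t=0$ value $G_\HH(z,w)$. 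From here one argues exactly as in the classical proof: for nonnegative $p\in H_s(\HH)$ the process $(\eta_t,p):=\int\eta_t(z)p(z)\,d^2z$ is a bounded continuous martingale with $d\langle(\eta_\cdot,p)\rangle_t=-dE_t(p)$, $E_t(p)=\iint p(z)p(w)G_t(z,w)$, all quantities converge as $t\to\infty$ since $\gamma$ reaches $\infty$, and one constructs $\tilde h:=(\eta_\infty-\eta_0)$ plus an independent zero boundary GFF in each component of $\HH\setminus\gamma$. A Laplace transform computation (conditioning on $K$, then using the martingale and its quadratic variation) shows $(\tilde h,p)$ is Gaussian with mean $0$ and variance $E_0(p)=\iint p(z)p(w)G_\HH(z,w)$, i.e.\ $\tilde h$ is a zero boundary GFF; the same computation with a conditional expectation at a stopping time $\tau$ gives that the conditional law of $(\tilde h+\eta_0)|_{\HH\setminus K_\tau}$ given $K_\tau$ is Gaussian with mean $\eta_\tau$ and variance $E_\tau(p)$, which is precisely the domain Markov property of Definition \ref{def::gff_levelline}.

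\medbreak
\noindent\textbf{Main obstacle.}
The routine parts (L\'evy characterisation, the stochastic calculus with the Green's function, the Laplace transform identity) are essentially as in \cite{ss}. The point that genuinely needs care with general regulated $F$ — and the part I expect to be the main obstacle — is verifying that the boundary observable $\eta_t(\cdot)$ is well behaved under the Loewner flow: that $\eta_t^0$ defined via $F\circ f_t^{-1}$ is a genuine bounded harmonic function for all $t$ (using that $F$ is $L^1$ with respect to harmonic measure, which is preserved by conformal maps), that $t\mapsto\eta_t(z)$ is continuous and adapted to the filtration of $W$, and that the family of hulls $K_t$ is local for $h$ so that Proposition \ref{localsetsprocess} applies and $\eta_{\tau(t)}(z)$ really is a Brownian motion (not merely a local martingale) — this is where transience of $\gamma$ and the continuity of $\CR(z,\HH\setminus K_t)$ in $t$ enter. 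One also needs to check the integrability required to pull $p$ through the stochastic integrals, using the uniform bound $|\Im(1/f_t(z))|\le 1/\Im(z)$ and boundedness of $\eta_t$ coming from the boundedness of $F$.
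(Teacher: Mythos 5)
Your forward direction is essentially the paper's: the paper simply invokes Proposition \ref{localsetsprocess} (whose proof is the conditional-Gaussian-increment argument you sketch via circle averages), plus the observation that the increments are independent of $K_{\tau(s)}$, hence of the past of $W$. That part is fine, and the final Laplace-transform construction of $\tilde h$ in your converse also matches the paper.

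The genuine gap is in the converse, at the step where you claim to obtain the cross-variations $d\langle \eta_\cdot(z),\eta_\cdot(w)\rangle_t=\Im(1/f_t(z))\,\Im(1/f_t(w))\,dt$ ``by polarising over pairs $z,w$.'' The hypothesis only tells you that each $(\eta_{\tau(t)}(z),t\ge0)$ is individually a Brownian motion; it says nothing about linear combinations $a\,\eta_t(z)+b\,\eta_t(w)$, which are not of the form $\eta_t(v)$ for any point $v$, so there is nothing to polarise. Knowing two local martingales' individual quadratic variations does not determine their bracket (they could be driven by the same or by independent noises), and this bracket is exactly what feeds into $E_t(p)$ and hence into the whole Gaussian computation. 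The paper's proof fills this hole by first proving that $W$ is a semimartingale with $d\langle W\rangle_t=4\,dt$ and extracting a \emph{single} Brownian motion $B$ (independent of $z$) with $d\eta_t(z)=\Im(2/f_t(z))\,dB_t$ for all $z$; only then does the cross-variation formula follow. The mechanism is the decomposition $\eta_t(z)=U_t(z)-\arg(f_t(z))$, where $U_t(z)$ is the bounded harmonic function with $t$-independent boundary data $F+2\lambda$, $\lambda$, $F$ and is therefore of bounded variation in $t$; combined with the local martingale property of $\eta_t(z)$ this forces $\arg(f_t(z))$, and hence $W$, to be a semimartingale, and an It\^o computation on $\arg(f_t(z))$ pins down $d\langle W\rangle_t=4\,dt$ and the common $B$. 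This is the heart of the converse and is absent from your plan; by contrast, the issues you flag as the ``main obstacle'' (well-definedness and adaptedness of $\eta_t$ for regulated $F$) are comparatively routine and are not where the paper's work lies.
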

\begin{proof}
If $(h,K)$ is coupled as in Definition \ref{def::gff_levelline}, by  Proposition \ref{localsetsprocess}, we know that $(\eta_{\tau(t)}(z), t\ge 0)$ is a Brownian motion. Moreover, by the proof of Proposition \ref{localsetsprocess}, we see that, for $s<t$, the variable $\eta_{\tau(t)}(z)-\eta_{\tau(s)}(z)$ is independent of $K_{\tau(s)}$ and has the law of Gaussian with mean zero and variance $t-s$. This implies that $(\eta_{\tau(t)}(z), t\ge 0)$ is a Brownian motion with respect to the filtration generated by $(W_{\tau(t)}, t\ge 0)$.

For the converse, assume that, for each $z\in \HH$, the process $(\eta_{\tau(t)}(z), t\ge 0)$ is a Brownian motion with respect the filtration generated by $(W_{\tau(t)}, t\ge 0)$. We will begin by showing that there exists a Brownian motion $(B_t, t\ge 0)$ (with respect to the filtration of $(W_t, t\ge 0)$) such that, for all $z$, we have 
\begin{equation}\label{eqn::eta_z_bm}
d\eta_t(z)=\Im{\frac{2}{f_t(z)}}dB_t.
\end{equation}
Define 
\begin{equation}\label{eqn::eta_z_bv}
U_t(z)=\eta_t(z)+\arg(f_t(z)).
\end{equation}
We have the following observations.
\begin{itemize}
\item By the definition of $\eta_t(\cdot)$, we know that $U_t(\cdot)$ is the bounded harmonic function on $\HH\setminus K_t$ with the boundary values given by $F+2\lambda$ on $\R_-\setminus K_t$, $\lambda$ along the boundary of $K_t$, and $F$ on $\R_+\setminus K_t$. 
Therefore, for fixed $z$, process $(U_t(z), t\ge 0)$ is of bounded variation and is measurable with respect to the filtration generated by $(W_t, t\ge 0)$. 
\item By the assumption, for fixed $z$, the process $(\eta_t(z), t\ge 0)$ is a Brownian motion when parameterized by 
\[C_t(z)=\log\CR(z, \HH)-\log\CR(z, \HH\setminus K_t).\] Thus, by Lemma \ref{lem::cr_evolution}, we see that 
\[d\langle \eta_t(z)\rangle=dC_t(z)=4\left(\Im{\frac{1}{f_t(z)}}\right)^2 dt.\]
Moreover, since $(\eta_{\tau(t)}(z), t\ge 0)$ is a Brownian motion with respect to the filtration of $(W_{\tau(t)}, t\ge 0)$, we know that, for any $s<t$, the variable $\eta_t(z)-\eta_s(z)$ is independent of $(W_u, u\le s)$ (we implicitly use the fact that $K$ is a Loewner chain generated by a continuous curve with continuous driving function), thus the process $(\eta_t(z), t\ge 0)$ is a local martingale with respect to the filtration of $(W_t, t\ge 0)$.
\end{itemize}
Combining these two facts, we know that $\arg(f_t(z))=\arg(g_t(z)-W_t)$ is a semimartingale, and hence $W_t$ is a semimartingale at least up to the first time that $z$ is swallowed by $K_t$. 
Note that 
\[d \arg(f_t(z))=\Im{\frac{-1}{f_t(z)}}dW_t+\Im{\left(\frac{2dt}{(f_t(z))^2}-\frac{d\langle W_t\rangle}{2( f_t(z))^2}\right)},\] 
and therefore, we have $d\langle W_t\rangle=4dt$ for all such $t$. Note however that the process $W$ does not depend on $z$, and since we can always choose $z$ far away as we want, we can argue that $(W_u, 0\le u\le t)$ is a semimartingale up to time $t$ for any $t>0$, with $d\langle W_t\rangle=4dt$. 
Thus, there exists a Brownian motion $(B_t, t\ge 0)$ and a process of bounded variation $(V_t, t\ge 0)$ such that $W_t= 2B_t-V_t$.
We emphasize that the processes $B$ and $V$ do not depend on $z$. Plugging in Equation (\ref{eqn::eta_z_bv}), we have that 
\begin{equation}\label{eqn::W_bv}
d\eta_t(z)=\Im{\frac{2}{f_t(z)}}dB_t,\quad dU_t(z)=\Im{\frac{1}{f_t(z)}}dV_t,
\end{equation}
as desired.

With equation (\ref{eqn::eta_z_bm}) in hand, we know that for  $z,w\in\HH$, we have 
\[d\langle \eta_t(z),\eta_t(w) \rangle = \Im \left(\frac{1}{f_t(z)}\right) \Im \left(\frac{1}{f_t(w)}\right)dt .\]
We know that, for each $z$, $\eta_t(z)$ is a continuous martingale. We can also extend the definition of $\eta_t(z)$ by setting it equal to its limit as $s\uparrow \tau(z)$ at all times after $\tau(z)$. We further define for $z,w\in \HH$ and $t\leq \tau(z)\wedge \tau(w)$, 
\[G_t(z,w):=G_\HH(f_t(z),f_t(w))\]
where we again extend this to all times after $\tau(z)\wedge \tau(w)$, by setting it constant and equal to its limit as $t\uparrow \tau(z) \wedge \tau(w)$. Observe that, in each connected component of $\HH\setminus \gamma[0,t]$, the function $\eta_t$ is the bounded harmonic function with boundary values shown in Figure \ref{fig::etaboundaryvalues}.
\begin{figure}[ht!]
\centering
\includegraphics[width=0.35\textwidth]{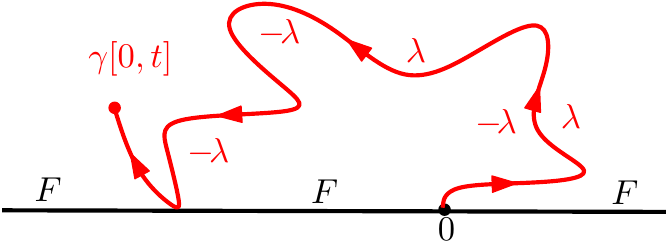}
\caption{The function $\eta_t(\cdot)$ is a harmonic function in each connected component of $\HH\setminus \gamma[0,t]$ with boundary values as above.}
\label{fig::etaboundaryvalues}
\end{figure} 
We also know that $G_t(z,w)$ is non-decreasing in $t$ for any fixed $z,w$. 

Putting all of the above together, we can deduce by stochastic calculus that for any $p\in H_s(\HH)$, 
$(\eta_t, p)$ is a continuous martingale with 
\[d\langle (\eta_t,p)\rangle = -dE_t(p),\quad \text{where}\quad E_t(p):= \int \int p(z)p(w)G_t(z,w) \, d^2z d^2w.\]

Now we are ready to show that the pair $(h,K)$ is coupled as in Definition \ref{def::gff_levelline}. Since for each $z,w\in \HH$ and non-negative $p\in H_s(\HH)$ we have that $\eta_t(z)$ is a martingale and $G_t(z,w)$, $E_t(p)$ are non-decreasing, it must be that all the limits $\eta_\infty(z)$, $G_\infty(z,w)$ and $E_\infty(p)$ exist. We let $\tilde{h}$ be equal to $\eta_\infty - \eta_0$ plus a sum of independent zero boundary  GFF's; one in each connected component of $\HH\setminus \gamma$. To show that $(K,\tilde{h})$ are coupled in the correct way we must verify that the marginal law of $\tilde{h}$ is that of a zero boundary  GFF in $\HH$, and that $(K,\tilde{h})$ satisfies the correct domain Markov property. This amounts to showing that for each non-negative $p\in H_s(\HH)$:
\begin{itemize}
\item $(\tilde{h},p)$ is a Gaussian with mean $0$ and variance $E_0(p)$.
\item For any $K$-stopping time $\tau$, the conditional law of $\left((\tilde{h}+\eta_0)|_{\HH\setminus K_\tau},p\right)$ given $K_\tau$ is a Gaussian with mean $(\eta_\tau,p)$ and variance $E_\tau(p)$.
\end{itemize}
To see the first point, for any $\mu>0$ we calculate
\begin{align*}
\E[\exp(-\mu(\tilde{h},p))] &=  \E[\E[\exp(-\mu(\tilde{h},p))|K]] \\
& = \E\left[\exp\left(-\mu(\eta_\infty-\eta_0, p)-\frac{\mu^2}{2}E_\infty(p)\right)\right] \\
& = \E\left[\exp\left(-\mu(\eta_\infty-\eta_0, p)+\frac{\mu^2}{2}(E_0(p)-E_\infty(p)\right)\right]\exp\left(-\frac{\mu^2}{2}E_0(p)\right)\\
& = \exp\left(-\frac{\mu^2}{2}E_0(p)\right),
\end{align*}
where the last line follows from the fact that $(\eta_t,p)$ is a continuous bounded martingale with mean $\eta_0(p)$ and quadratic variation $E_0(p)-E_\infty(p).$ The second point follows similarly, replacing the initial expectation with a conditional one.
\end{proof}

\section{Non-boundary intersecting regime}\label{sec::nonboundary_intersecting}
All the conclusions in Sections \ref{sec::nonboundary_intersecting} and \ref{sec::monotonicity} are proved in \cite{msig1, wwll1} for level lines with piecewise constant boundary data and constant height difference. Although many of the ideas from these papers are fundamental to our proofs, there are several places where they fail for general boundary data. Therefore, we treat the general case here and give complete proofs in the next two sections.
\begin{lemma}\label{lemma::nothitting}
 Suppose that $\gamma$ is a random continuous curve from 0 to some $\gamma$-stopping time $T$ with almost surely continuous driving function. Assume that $\gamma$ is coupled with a zero boundary $\GFF$ $h$ as a level line of $h+F$ up to time $T$ where 
\[F(x)\ge -\lambda,\quad \forall x<0;\quad F(x)\ge \lambda,\quad \forall x\ge 0.\]
Then almost surely $\gamma[0,T] \cap (0,\infty)=\emptyset$.
\end{lemma}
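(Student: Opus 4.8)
\emph{Proof sketch.} The plan is to combine a harmonic comparison estimate with a reduction, near a would‑be hitting point, to the piecewise constant case treated in \cite{wwll1}.

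The key estimate is the following. With the notation of Definition~\ref{def::gff_levelline}, set $U_t(z):=\eta_t(z)+\arg f_t(z)$. As recalled in the proof of Lemma~\ref{lem::coupling_iff_martingale}, $U_t$ is, for each $t$, the bounded harmonic function on $\HH\setminus K_t$ with boundary values $\lambda$ along $\gamma[0,t]$, $F$ on $\R_+\setminus K_t$, and $F+2\lambda$ on $\R_-\setminus K_t$. The hypotheses $F\ge\lambda$ on $\R_+$ and $F\ge-\lambda$ on $\R_-$ say exactly that all of these boundary values are $\ge\lambda$, so the maximum principle gives $U_t\ge\lambda$ on $\HH\setminus K_t$ for every $t\le T$. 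Since $\arg f_t(z)\in(0,\pi)=(0,2\lambda)$, this yields $\eta_t(z)>-\lambda$ everywhere, and, because $\arg f_t(z)\to0$ as $z$ approaches a point of $\R_+\setminus K_t$, it yields $\liminf\eta_t(z)\ge\lambda$ along any such sequence. This is the analogue of the bound used in the piecewise constant setting, and morally it is what keeps the level line away from $\R_+$.

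To exploit it, observe that it suffices to prove $\PP[\gamma[0,T]\cap[a,b]\ne\emptyset]=0$ for each fixed $0<a<b<\infty$; suppose this fails. For $\rho>0$ let $\tau_\rho$ be the first time $\gamma$ comes within distance $\rho$ of $[a,b]$, so on $\{\tau_\rho<T\}$ the tip $\gamma(\tau_\rho)$ lies within $\rho$ of some $x_0\in[a,b]$. By the domain Markov property, conditionally on $\gamma[0,\tau_\rho]$ the field on $\HH\setminus K_{\tau_\rho}$ equals $h\circ f_{\tau_\rho}+\eta_{\tau_\rho}$, and on the boundary arc $B(x_0,2\rho)\cap\R_+$, which is at distance $\ge\rho$ from $K_{\tau_\rho}$, the harmonic function $\eta_{\tau_\rho}$ has boundary values $F\ge\lambda$. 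Using Proposition~\ref{prop::gffabscont}(2) we compare this field, on the half–disc $B(x_0,2\rho)\cap\HH$, with a zero boundary GFF plus the harmonic extension of a \emph{piecewise constant} function $G$ satisfying $G\ge\lambda$ on $\R_+$, $G\ge-\lambda$ on $\R_-$, and matching the boundary values of $\eta_{\tau_\rho}$ near $x_0$ up to an error tending to $0$ along $B(x_0,2\rho)\cap\R_+$. Since $G\ge\lambda=-\lambda+2\lambda$ near $x_0$, Lemma~\ref{lem::results_piecewiseconstant}(3) (with $c=2\lambda$) shows the level line of the comparison field a.s.\ does not hit $x_0$; because piecewise constant level lines are determined by the field (Section~\ref{subsec::levelline_piecewiseconstant}), the event that this comparison level line reaches $[a,b]$ before exiting $B(x_0,\rho)\cap\HH$ is, up to null events, measurable for the field on $B(x_0,2\rho)\cap\HH$ and has probability tending to $0$ as $\rho\to0$. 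Transporting this bound through the mutual absolute continuity, covering $[a,b]$ by finitely many such half–discs, and letting $\rho\to0$, gives $\PP[\gamma[0,T]\cap[a,b]\ne\emptyset]=0$.

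I expect the delicate point to be this transfer across absolute continuity: one must verify that whether the level line of $h+F$ hits $[a,b]$ before leaving a fixed small neighbourhood of it is determined, modulo null events, by the restriction of the field to a slightly larger neighbourhood, so that mutual absolute continuity of those restrictions compares the two hitting events. This locality is not part of Definition~\ref{def::gff_levelline}, which is stated globally; it has to be extracted from the domain Markov property iterated along a fine sequence of stopping times together with the determination statement for piecewise constant boundary data, and this is precisely where the continuity of $\gamma$ assumed in the hypothesis is indispensable. A secondary technical point is the selection of the comparison function $G$: one should localise to a scale, and a point $x_0$, at which the boundary data is regular enough for the error in Proposition~\ref{prop::gffabscont}(2) to genuinely vanish at the boundary, exploiting that $F\ge\lambda$ holds on all of $\R_+$ rather than only pointwise. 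Finally, the estimate $\eta_t(z)>-\lambda$ from the second paragraph will be reused below (for transience, and for the symmetric statements near $\R_-$).
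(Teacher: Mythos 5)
Your opening estimate is fine: with $U_t(z)=\eta_t(z)+\arg f_t(z)$, the maximum principle indeed gives $U_t\ge\lambda$, hence $\eta_t>-\lambda$. But the heart of your argument --- transporting the non-hitting statement from a piecewise constant comparison field to $\gamma$ through mutual absolute continuity of the fields --- has a gap that cannot be filled with the tools you cite, and you have in fact identified it yourself without resolving it. Proposition \ref{prop::gffabscont} compares the laws of the \emph{fields} on $B(x_0,2\rho)\cap\HH$; it says nothing about the coupled curve unless the event ``$\gamma$ hits $[a,b]$ before leaving the half--disc'' is (up to null sets) a measurable function of the field restricted there. Under the hypotheses of this lemma, $\gamma$ is merely \emph{some} continuous curve coupled with $h$ via the domain Markov property: it is not known to be determined by $h$, globally or locally. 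Determination for general boundary data is Theorem \ref{thm::gff_levelline_determination}, which is proved only in Section \ref{sec::reversibilityetc} and whose proof chain (Lemmas \ref{lem::nonboundary_intersecting}, \ref{lemma::absolutecontinuity}--\ref{lem::reversibility_aux}) passes through the present lemma; so invoking any form of it here is circular. Nor does the determination result for piecewise constant data rescue you: after conditioning on $\gamma[0,\tau_\rho]$ and mapping by $f_{\tau_\rho}$, the conditional boundary data is $\eta^0_{\tau_\rho}$, which involves the general function $F$, so Lemma \ref{lem::results_piecewiseconstant}(3) does not apply to the conditional curve, and comparing it with the level line of a piecewise constant field would require either the monotonicity of Section \ref{sec::monotonicity} (also downstream of this lemma) or, again, a locality/determination statement you do not have.

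The paper's proof avoids all of this and is purely a stochastic calculus argument, for which your static bound $U_t\ge\lambda$ is replaced by a \emph{monotonicity in $t$}: since $U_t(\cdot)-\lambda$ is the bounded harmonic function with nonnegative boundary data $F+\lambda$ on $\R_-\setminus K_t$, $0$ on $K_t$, $F-\lambda$ on $\R_+\setminus K_t$, it is non-increasing as the domain shrinks. Feeding this into the semimartingale decomposition $W=2B-V$ and the identity $dU_t(z)=\Im\bigl(1/f_t(z)\bigr)\,dV_t$ from the proof of Lemma \ref{lem::coupling_iff_martingale} shows $V$ is non-decreasing, whence $Z_t:=V_t^R(0^+)-W_t$ satisfies $dZ_t\ge -2\,dB_t+2\,dt/Z_t$; comparison with a Bessel process of dimension $2$ shows $Z$ never hits $0$, i.e.\ $\gamma$ never hits $(0,\infty)$. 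If you want to keep a comparison-to-piecewise-constant strategy, the correct mechanism in this paper is the auxiliary-curve argument of Lemma \ref{lem::nonboundary_intersecting} (run a piecewise constant level line of $-h-G$ and use the present lemma conditionally), not absolute continuity of field restrictions; but that argument takes the present lemma as input, so for this statement itself the Bessel-type argument (or something equally direct) is needed.
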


\begin{proof}
Assume the same notations as in Definition \ref{def::gff_levelline}. 
First, for any $z \in \HH$, define $U_t(z)$ in the same way in Equation (\ref{eqn::eta_z_bv}), and we will explain that the process $(U_t(z), 0\le t\le T)$ is non-increasing. 
By the definition of $\eta_t(z)$, we know that $U_t(\cdot)-\lambda$ is the harmonic function on $\HH\setminus K_t$ with the boundary values given by $F+\lambda\ge 0$ on $\R_-\setminus K_t$, zero along the boundary of $K_t$, and $F-\lambda\ge 0$ on $\R_+\setminus K_t$. This harmonic function is non-increasing in $t$, and thus $U_t(z)$ is non-increasing.  

Next, we will show that the process 
\[Z_t:=V_t^R(0^+)-W_t\]
cannot hit zero. 
By the proof of Lemma \ref{lem::coupling_iff_martingale}, we know that there exist a Brownian motion $B$ and a process of bounded variation $V$ such that $W=2B-V$ and Equation (\ref{eqn::W_bv}) holds. Since $U_t(z)$ in non-increasing in $t$, the process $V_t$ is non-decreasing in $t$ up to the time that $z$ is swallowed. However, the process $V$ does not depend on $z$, and since we can always choose $z$ far away, we have that $(V_u, 0\le u\le t)$ is non-decreasing in $u$ for any $t>0$. 
Then we have, for all $t$, 
\[d Z_t \ge -2dB_t+\frac{2dt}{Z_t}.\]
We can compare $Z_t/2$ with a Bessel process of dimension 2, and so may conclude that $Z_t$ cannot hit 0. This implies that the curve cannot hit $(0,\infty)$. 
\end{proof}

\begin{remark}
The proof of Lemma \ref{lemma::nothitting} also applies to the case when 
\[F(x)\le -\lambda,\quad x<0;\quad F(x)\le\lambda,\quad x\ge 0\]
by symmetry. In this case we see that for $\gamma$ satisfying the same conditions as in Lemma \ref{lemma::nothitting}, we have $\gamma[0,T]\cap(-\infty,0)=\emptyset$ almost surely. 
\end{remark}

\begin{figure}[ht!]
\begin{subfigure}[b]{0.48\textwidth}
\begin{center}
\includegraphics[width=0.73\textwidth]{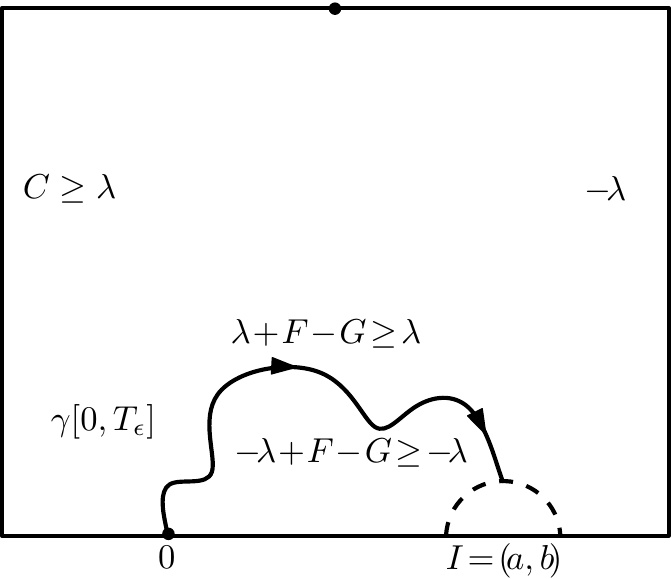}
\end{center}
\caption{The boundary value of $-h-G$ given $\gamma[0,T_{\eps}]$.}
\end{subfigure}
$\quad$
\begin{subfigure}[b]{0.48\textwidth}
\begin{center}\includegraphics[width=0.73\textwidth]{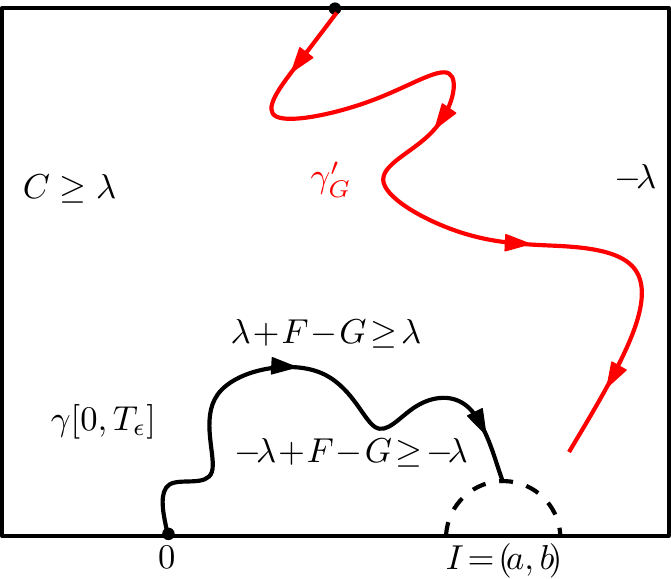}
\end{center}
\caption{$\gamma_G'$ cannot hit the left side of $\gamma[0,T_{\eps}]$ or $(-\infty, 0]$.}
\end{subfigure}
\caption{\label{fig::nonboundary_intersecting} Explanation of the boundary values in the proof of Lemma \ref{lem::nonboundary_intersecting}.}
\end{figure}

\begin{lemma}\label{lem::nonboundary_intersecting}
Suppose that $\gamma$ is a random continuous curve from 0 to some $\gamma$-stopping time $T$ with almost surely continuous driving function. Assume that $\gamma$ is coupled with a zero boundary $\GFF$ $h$ as a level line of $h+F$ up to time $T$ where
\[F(x)\le -\lambda,\quad \forall x<0;\quad F(x)\ge\lambda,\quad \forall x\ge0.\] Then, almost surely, the curve $(\gamma(t), 0\le t\le T)$ does not hit the boundary except the two end points.
\end{lemma}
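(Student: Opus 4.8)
The plan is to reduce the statement, via the reflection $w\mapsto-\bar w$ of $\HH$ in the imaginary axis, to a single non-hitting assertion, and then to prove that assertion by comparison with an auxiliary level line to which Lemma~\ref{lemma::nothitting} applies. Reflection turns a coupling $(\gamma,h)$ as in the hypothesis into a coupling of the reflected curve with another zero-boundary GFF, realised as the level line of that GFF plus $\tilde F$, where $\tilde F(x)=-F(-x)$ still satisfies $\tilde F\le-\lambda$ on $\R_-$ and $\tilde F\ge\lambda$ on $\R_+$; since the reflection interchanges $(-\infty,0)$ with $(0,\infty)$, it suffices to prove that \emph{every} such curve a.s. avoids $(-\infty,0)$ on $(0,T)$, the statement for $(0,\infty)$ then following by reflection. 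Suppose this fails. Then for some small $\eps>0$ the stopping time $T_\eps$, defined as the first time $\gamma$ comes within distance $\eps$ of $(-\infty,0]$ and capped at $T$, satisfies $\PP(T_\eps<T)>0$; we condition on $\gamma[0,T_\eps]$ and work on that event. As $\gamma[0,T_\eps]$ stays at distance at least $\eps$ from $(-\infty,0]$, it is enough to show that the continuation $\gamma|_{[T_\eps,T]}$ a.s. does not hit the boundary arc $A$ of $\HH\setminus\gamma[0,T_\eps]$ consisting of $(-\infty,0]$ together with the left side of $\gamma[0,T_\eps]$ (the side facing $(-\infty,0)$).

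Conditionally on $\gamma[0,T_\eps]$, the field $h+F$ in $\HH\setminus\gamma[0,T_\eps]$ has mean $\eta_{T_\eps}$, equal to $F$ on $\R$ and to $\mp\lambda$ on the two sides of $\gamma[0,T_\eps]$; in particular $\eta_{T_\eps}\le-\lambda$ on $A$ and $\eta_{T_\eps}\ge\lambda$ on the complementary arc. Now introduce the level line $\gamma'$ in $\HH\setminus\gamma[0,T_\eps]$, targeted at the tip $\gamma(T_\eps)$, of the field $-h-G$, where $G$ agrees with $F$ on $(-\infty,0]$ but is lowered to the constant $\lambda$ on $(0,\infty)$ — a harmless modification needed to avoid a continuation threshold for $\gamma'$, to be removed at the end. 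Conditionally on $\gamma[0,T_\eps]$ this field has boundary data $\ge\lambda$ on $A$ and $\ge-\lambda$ on the complementary arc (Figure~\ref{fig::nonboundary_intersecting}(a)); mapping $\HH\setminus\gamma[0,T_\eps]$ conformally to $\HH$ so that the source $\infty$ of $\gamma'$ goes to $0$ and $A$ goes to $\R_+$, the transformed data is $\ge-\lambda$ on $\R_-$ and $\ge\lambda$ on $\R_+$, so Lemma~\ref{lemma::nothitting} applies and shows that $\gamma'$ a.s. does not hit $A$ (Figure~\ref{fig::nonboundary_intersecting}(b)).

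The remaining step is to transfer this to $\gamma|_{[T_\eps,T]}$. The curve $\gamma'$ runs from $\infty$ to $\gamma(T_\eps)$ — the same two boundary points joined by $A$ and by $\gamma|_{[T_\eps,T]}$ — so it would suffice to know that $\gamma'$ lies on the $A$-side of $\gamma|_{[T_\eps,T]}$ and that the two curves do not cross: then $\gamma'$ separates $A$ from $\gamma|_{[T_\eps,T]}$, which therefore cannot touch $A\supseteq(-\infty,0]$, giving the desired contradiction and completing the reduction. This ordering and non-crossing should hold because $\gamma|_{[T_\eps,T]}$ is the level line of $(h+F)|_{\HH\setminus\gamma[0,T_\eps]}$ from $\gamma(T_\eps)$ to $\infty$, while $\gamma'$ is, after undoing the modification $G\rightsquigarrow F$, the level line of the negative of that field from $\infty$ to $\gamma(T_\eps)$. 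Removing the modification, which raises the data only on $(0,\infty)$, can be handled either via the local absolute continuity of Proposition~\ref{prop::gffabscont}, or by running the whole comparison for piecewise constant approximations $F_n\to F$, for which the ordering, non-crossing and reversibility of level lines are available from \cite{wwll1}, and then passing to the limit.

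The main obstacle is exactly this last comparison: establishing the non-crossing and the one-sided ordering of the level lines of a field and of its (modified) negative \emph{without} having the general monotonicity theorem (Theorem~\ref{thm::monotonicity}) or reversibility (Theorem~\ref{thm::reversibility}) available yet. I expect to get it directly from the coupling, using the martingale description behind Lemma~\ref{lem::coupling_iff_martingale}: realise both curves against the Brownian motion driving the common field and control the image of $\gamma'$ under the centered Loewner maps of $\gamma|_{[T_\eps,T]}$, or vice versa, as in the piecewise constant theory. Keeping this under control near the shared endpoints $0$ and $\gamma(T_\eps)$, together with the removal of the modification $G$, is where the real work lies.
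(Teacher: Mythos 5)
There are two genuine gaps, both in the second half of your argument. First, your auxiliary curve $\gamma'$ is the level line of $-h-G$ where $G$ agrees with $F$ on $(-\infty,0]$; since $F$ is only assumed bounded/regulated, this $G$ is not piecewise constant, so at this stage the existence of $\gamma'$ as a continuous transient curve coupled with the field (which you need both to apply Lemma \ref{lemma::nothitting} and to make the separation argument meaningful) is simply not available --- it is what Theorem \ref{thm::gff_levelline_coupling} will establish, and that theorem is proved using the present lemma. The paper sidesteps this by taking $G$ \emph{piecewise constant} with $F\ge G$ everywhere (namely $-C$ on $\R_-$ and $\lambda$ on $\R_+$, where $F\ge -C$), so that existence, continuity, transience and the non-hitting of a fixed interval all come from \cite{wwll1}. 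Second, and more seriously, your concluding step needs $\gamma'$ to lie on the $A$-side of $\gamma|_{[T_\eps,T]}$ without crossing it; you acknowledge this is unproven and propose to extract it ``directly from the coupling''. But that ordering is essentially the monotonicity/reversibility machinery (Lemmas \ref{lem::mono_aux} and \ref{lem::reversibility_aux}, Theorems \ref{thm::monotonicity} and \ref{thm::reversibility}), whose proofs in Section \ref{sec::monotonicity} themselves rest on Lemma \ref{lem::nonboundary_intersecting} and its technique (via Lemmas \ref{lemma::absolutecontinuity} and \ref{lem::simplicity}). As sketched, the route is circular, and the ``real work'' you defer is precisely the content of the next section.

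The paper's proof requires no ordering between the two curves. It fixes $I=(a,b)\subset(0,\infty)$, lets $T_\eps$ be the first time $\gamma$ comes within $\eps$ of $I$, and considers the piecewise-constant level line $\gamma_G'$ of $-h-G$ from $\infty$ to $0$. Given $\gamma[0,T_\eps]$, Lemma \ref{lemma::nothitting} (applied to $\gamma_G'$ in $\HH\setminus\gamma[0,T_\eps]$, using only $F\ge G$) forbids $\gamma_G'$ from reaching $(-\infty,0]$ or the left side of $\gamma[0,T_\eps]$ before the right side or the tip; since $\gamma_G'$ is transient to $0$, a topological argument then forces $\gamma_G'$ to come within $\eps$ of $I$, and letting $\eps\to 0$ contradicts the known fact that $\gamma_G'$ avoids $I$. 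The contradiction is thus extracted from the \emph{auxiliary} curve's forced behaviour, not from shielding $\gamma$ behind it. A smaller point: your stopping time is degenerate as defined, since $\gamma(0)=0\in(-\infty,0]$ gives $T_\eps=0$; one must localise to intervals bounded away from $0$, as the paper does.
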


\begin{proof}
It is sufficient to show that, for any $0<a<b<\infty$, the curve $\gamma$ does not hit the interval $I=(a,b)$. We prove by contradiction. 

Suppose that $\gamma$ does hit $I$ with positive probability, and on this event, define $T_{\eps}$ to be the first time that $\gamma$ gets within $\eps$ of $I$. Since $F$ is bounded, suppose that $F\ge -C$ for some finite $C\ge \lambda$. Let $G$ be the bounded harmonic extension of the function which is equal to $-C$ on $\R_-$ and is equal to $\lambda$ on $\R_+$. Note that $F\ge G$. Let $\gamma_G'$ be the level line of $-h-G$ from $\infty$ to 0. By Lemma \ref{lem::results_piecewiseconstant}(1) and (2), we know that 
$\gamma_G'$ is almost surely continuous and transient; and that $\gamma_G'$ almost surely does not hit $I$. 

Let $\tilde{h}$ be $h$ restricted to the unbounded connected component of $\HH\setminus \gamma[0,T_{\eps}]$, then conditionally on $\gamma[0,T_{\eps}]$, the field $-\tilde{h}-G$ is a $\GFF$ with boundary data as shown in Figure \ref{fig::nonboundary_intersecting}(a). Moreover, given $\gamma[0,T_{\eps}]$, the curve $\gamma_G'$ is coupled with $\tilde{h}$ so that it is a level line of $-\tilde{h}-G$ up until the first time that $\gamma_G'$ hits $\gamma[0, T_{\eps}]$ (by Propositions \ref{propn::localsetunions} to \ref{propn::morelocalsets}). Since $F-G$ is positive on $\HH$, we see from Lemma \ref{lemma::nothitting} that $\gamma_G'$ cannot hit the left side of $\gamma[0,T_{\eps}]$ or $(-\infty, 0]$ before hitting the right side of $\gamma[0,T_{\eps}]$ or the tip $\gamma(T_{\eps})$, see Figure \ref{fig::nonboundary_intersecting}(b). In any case, this implies that $\gamma_G'$ has to get within $\eps$ of $I$. Since this holds for any $\eps>0$ on the event that $\gamma$ hits $I$ and $\gamma_G'$ is continuous, we can conclude that $\gamma_G'$ hits $I$ with positive probability, contradiction.
\end{proof}

\begin{lemma}\label{lem::nonboundary_intersecting_simple}
Assume the same notations as in Lemma \ref{lem::nonboundary_intersecting}. Then $\gamma$ is almost surely simple.
\end{lemma}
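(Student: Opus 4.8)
The plan is to reduce the statement to the analogous fact for $\SLE_4$, using the local absolute continuity of level lines with respect to $\SLE_4$ away from the boundary (as recorded in Section \ref{subsec::levelline_piecewiseconstant}), together with the boundary-avoidance property just established in Lemma \ref{lem::nonboundary_intersecting}. Since we already know from that lemma that $\gamma(t)$ does not touch $\partial\HH$ for $t\in(0,T)$, it suffices to rule out self-intersections in the interior of the domain. First I would fix a deterministic time $t_0\in(0,T)$ (or, more carefully, argue on the event $\{T>t_0\}$) and a small $\delta>0$, and consider the curve restricted to the time interval $[t_0,t_1]$ on the event that $\gamma[t_0,t_1]$ stays at distance at least $\delta$ from $\partial\HH$. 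On such an event the field $h+F$ restricted to a neighbourhood of $\gamma[t_0,t_1]$ is mutually absolutely continuous with respect to a zero-boundary GFF plus a bounded harmonic function in a slightly larger domain (Proposition \ref{prop::gffabscont}(1)), and hence the law of $\gamma|_{[t_0,t_1]}$ — being measurably determined by (indeed a local set for) the restricted field, via the domain Markov property in Definition \ref{def::gff_levelline} — is mutually absolutely continuous with respect to the law of an $\SLE_4$ segment, which is almost surely simple by the results of \cite{rs} quoted above. Therefore $\gamma|_{[t_0,t_1]}$ is almost surely simple on this event.

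Next I would remove the conditioning. Write $\gamma(0,T)\cap\partial\HH=\emptyset$ a.s.\ (Lemma \ref{lem::nonboundary_intersecting}), so for a.e.\ realisation, every compact subinterval $[t_0,t_1]\subset(0,T)$ has $\gamma[t_0,t_1]$ at positive distance from $\partial\HH$; thus the event considered above, taken over a countable exhausting family of subintervals and a sequence $\delta\to 0$, has full probability on $\{T>t_0\}$. Consequently $\gamma|_{[t_0,t_1]}$ is simple for every such subinterval, a.s., which gives that $\gamma|_{(0,T)}$ is simple. Finally one handles the two endpoints: $\gamma$ starts at $0$ and the only concern is whether it returns to $0$ or, if $T$ is a hitting time of $\infty$-type behaviour, pinches itself near the terminal point; but $\gamma(0,T)$ avoiding $\R$ already prevents a return to $0$, and a self-touching in the interior arbitrarily close to the terminal time would again be visible on some compact subinterval bounded away from the boundary and is excluded by the previous step. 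Hence $\gamma$ is simple on all of $[0,T]$.

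The main obstacle I anticipate is making the absolute continuity argument fully rigorous at the level of the \emph{curve} rather than the field: a priori Definition \ref{def::gff_levelline} only gives the domain Markov property, so one must invoke (as is standard in this setting, cf.\ \cite{msig1}) that the level line restricted to a bounded-away-from-$\partial\HH$ portion is a local set measurable with respect to the field there, so that mutual absolute continuity of the fields transfers to mutual absolute continuity of the curve laws; and one has to be a little careful that the "simple" property is a measurable event preserved under this change of measure, and that the stopping-time truncations used to define the good events are compatible with the Loewner/SLE framework. Once these measurability points are in place, the conclusion follows directly from simplicity of $\SLE_4$.
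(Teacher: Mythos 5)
There is a genuine gap at the heart of your argument: the step where you transfer mutual absolute continuity of the \emph{fields} to mutual absolute continuity of the \emph{curve laws}. Under the hypotheses of Lemma \ref{lem::nonboundary_intersecting} (and hence of this lemma), $\gamma$ is only assumed to be \emph{coupled} with $h$ so that the domain Markov property of Definition \ref{def::gff_levelline} holds; it is not assumed, and at this stage of the paper it is not known, that $\gamma$ is a measurable function of $h$ (nor even that its conditional law given $h$ is canonical). Determination of the level line by the field for general boundary data is precisely Theorem \ref{thm::gff_levelline_determination}, which is proved later and whose proof relies on the lemmas of Sections \ref{sec::nonboundary_intersecting} and \ref{sec::monotonicity} -- including this one. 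The absolute continuity statement quoted in Section \ref{subsec::levelline_piecewiseconstant} concerns $\SLE_4(\underline{\rho})$ processes, i.e.\ the piecewise constant case where determination is available from \cite{wwll1}; it cannot be invoked for the abstract coupling considered here. Without determination, Proposition \ref{prop::gffabscont} compares the laws of the restricted fields but says nothing about the law of the coupled curve, so the reduction to simplicity of $\SLE_4$ does not go through. (There is also a secondary issue: comparing $\gamma|_{[t_0,t_1]}$ with an ``$\SLE_4$ segment'' requires conditioning on $\gamma[0,t_0]$ and working in the slit domain, whose boundary includes the past of the curve, so the ``away from $\partial\HH$'' localisation alone is not the right one; but this could in principle be repaired, whereas the determination issue cannot, at this point, without circularity.)

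The paper's proof avoids all of this by using only the domain Markov property together with Lemma \ref{lem::nonboundary_intersecting} itself: for any $\gamma$-stopping time $\tau$, given $\gamma[0,\tau]$, the continuation is coupled as a level line of the conditional field in $\HH\setminus\gamma[0,\tau]$, whose boundary data is $F\le-\lambda$ on $\R_-$, $-\lambda$ on the left side of $\gamma[0,\tau]$, $\lambda$ on the right side, and $F\ge\lambda$ on $\R_+$; this again satisfies the hypotheses of Lemma \ref{lem::nonboundary_intersecting}, so $\gamma(\tau,T)$ cannot hit $\gamma(0,\tau)$. Taking $\tau=q$ over all rationals $q>0$ and using continuity of $\gamma$ then rules out double points. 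If you want to salvage your strategy, you would have to restrict it to settings where determination (or at least a canonical conditional law of the curve given the field) is already known, which defeats the purpose here; the intrinsic argument via the domain Markov property is both shorter and exactly suited to the weak hypotheses of the lemma.
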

\begin{proof}
First, we argue that, for any $\gamma$-stopping time $\tau$, we have $\gamma[0,\tau)\cap \gamma(\tau,T)=\emptyset$ almost surely. Given $\gamma[0,\tau]$, denote by $\tilde{h}$ the restriction of $h+F$ to $\HH\setminus \gamma[0,\tau]$ (since $\gamma$ does not hit the boundary, this set only has one connected component). 
By the domain Markov property in Definition \ref{def::gff_levelline}, we know that, given $\gamma[0,\tau]$, the curve $\gamma|_{[\tau,T)}$ is coupled with $\tilde{h}$ as its level line. Note that the boundary value of $\tilde{h}+F$ is $F\le -\lambda$ on $\R_-$, is $-\lambda$ along the left side of $\gamma[0,\tau]$, is $\lambda$ along the right side of $\gamma[0,\tau]$, and is $F\ge \lambda$ along $\R_+$. By Lemma \ref{lem::nonboundary_intersecting}, we know that $\gamma(\tau, T)$ cannot hit $\gamma(0,\tau)$.

Next, we show that $\gamma$ is almost surely simple. For any $q>0$, define $A_q$ to be the event that $\gamma(0,q)\cap\gamma(q,T)\neq\emptyset$. If $\gamma$ has double point, then $A_q$ happens for some positive rational $q$, since $\gamma$ is continuous. However, by the above argument, we know that $\cup_{q\in\QQ_+}A_q$ has zero probability. Therefore, $\gamma$ is almost surely simple.  
\end{proof}

\begin{proposition} Suppose that $h$ is a zero boundary  $\GFF$ and that $F$ is bounded and satisfies
\[F(x)\le -\lambda,\quad \forall x<0;\quad F(x)\ge\lambda,\quad \forall x\ge0.\]
Suppose that $\gamma$ (resp. $\gamma'$) is a random continuous transient curve from $0$ to $\infty$ (resp. from $\infty$ to 0) with almost surely continuous driving function.  

Assume that $\gamma$ is coupled with $h$ as a level line of $h+F$, that $\gamma'$ is coupled with $h$ as a level line of $-h-F$, and that the triple $(h,\gamma,\gamma')$ are coupled so that $\gamma$ and $\gamma'$ are conditionally independent given $h$. Then almost surely $\gamma'$ equals $\gamma$. In particular, this implies that $\gamma$ is almost surely determined by $h$.
\end{proposition}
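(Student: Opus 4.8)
The plan is to adapt the argument of Schramm and Sheffield for $\SLE_4$ (see also \cite{msig1,wwll1}) to the present setting: first reduce everything to the statement that $\gamma=\gamma'$ almost surely as sets, and then use the conditional independence to upgrade this to determination.

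\emph{Preliminary reductions.} By Lemmas \ref{lem::nonboundary_intersecting} and \ref{lem::nonboundary_intersecting_simple}, $\gamma$ is almost surely a simple curve meeting $\R$ only at its two endpoints. Applying the conformal map $z\mapsto -1/z$, which carries the level line of $-h-F$ from $\infty$ to $0$ to the level line of a GFF whose boundary value is $\le-\lambda$ on $\R_-$ and $\ge\lambda$ on $\R_+$, the same two lemmas give that $\gamma'$ is almost surely simple and meets $\R$ only at $0$ and $\infty$. Moreover both $\gamma$ and $\gamma'$ are local sets for $h$ — this is built into the coupling of Definition \ref{def::gff_levelline} via Lemma \ref{lem::coupling_iff_martingale} and Proposition \ref{localsetsprocess} — and since they are conditionally independent given $h$, Proposition \ref{propn::localsetunions} shows that $A:=\gamma\cup\gamma'$ is local and that $\mathcal C_A$ is well behaved.

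\emph{Showing $\gamma=\gamma'$.} It suffices to prove $\gamma\subseteq\gamma'$ almost surely, the reverse inclusion following by the symmetric argument with $(h+F$, $0\to\infty)$ and $(-h-F$, $\infty\to0)$ interchanged. Fix a rational $\tau>0$ and condition on $(\gamma[0,\tau],h|_{\gamma[0,\tau]})$. Since $\gamma$ does not touch $\R\setminus\{0\}$, the set $D:=\HH\setminus\gamma[0,\tau]$ is a simply connected domain, and by the domain Markov property together with Propositions \ref{propn::localsetunions} to \ref{propn::morelocalsets} and the conditional independence, the curve $\gamma'$ is, conditionally, a level line of $(-h-F)|_D$ from $\infty$ to $0$ run until the first time it hits $\gamma[0,\tau]$. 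One then reads off the boundary data of $-h-F$ on $\partial D$: it equals $-F\ (\ge\lambda)$ on $\R_-$, $-F\ (\le-\lambda)$ on $\R_+$, and it is constant, equal to $+\lambda$ on the side of $\gamma[0,\tau]$ adjacent to $\R_-$ and to $-\lambda$ on the side adjacent to $\R_+$. Mapping $D$ to $\HH$ and invoking Lemma \ref{lemma::nothitting} and its reflected version, this boundary data forbids $\gamma'$ from hitting $\R\setminus\{0\}$ or either side of $\gamma[0,\tau]$: the only boundary point of $D$ at which the data of $-h-F$ has a downward jump of size $2\lambda$ in the counterclockwise direction is the tip $\gamma(\tau)$, so $\gamma(\tau)$ is the only point of $\gamma[0,\tau]$ that $\gamma'$ can possibly hit.

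\emph{The main obstacle.} The step I expect to require the most care is showing that $\gamma'$ actually reaches $\gamma(\tau)$ almost surely, rather than accumulating at $0$ without ever touching $\gamma[0,\tau]$. Here I would compare $\gamma'$, up until it first hits $\partial D\setminus\R$, with the level line of $(-h-F)|_D$ \emph{targeted at $\gamma(\tau)$}; by the analysis above neither curve can touch the two sides of $\gamma[0,\tau]$ nor $\R\setminus\{0\}$ before reaching a distinguished boundary point, so a target-independence argument (valid for piecewise constant data by \cite{wwll1}, and extended to general boundary data by an approximation as in Sections~\ref{sec::weakcvg}--\ref{sec::existence}) shows the two curves agree up to that time; since the curve targeted at $\gamma(\tau)$ is continuous and transient it reaches $\gamma(\tau)$, hence so does $\gamma'$. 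Thus $\gamma(\tau)\in\gamma'$ almost surely for every rational $\tau$, and since $\gamma$ is continuous and $\gamma'$ closed this yields $\gamma\subseteq\gamma'$, and therefore $\gamma=\gamma'$, almost surely.

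\emph{Determination.} Finally, $\gamma$ and $\gamma'$ are conditionally independent given $h$ and, by the above, almost surely equal; in particular they have the same conditional law given $h$. Hence for any event $A$ in a countable family of Borel sets generating the $\sigma$-algebra on simple curves (identified with their traces),
\[\E[\mathbf 1_A(\gamma)\mid h]=\E[\mathbf 1_A(\gamma)\mathbf 1_A(\gamma')\mid h]=\E[\mathbf 1_A(\gamma)\mid h]^2 ,\]
so $\E[\mathbf 1_A(\gamma)\mid h]\in\{0,1\}$ almost surely, which forces $\mathbf 1_A(\gamma)$ to coincide almost surely with a $\sigma(h)$-measurable random variable. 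Letting $A$ range over the generating family, $\gamma$ is almost surely determined by $h$, as claimed.
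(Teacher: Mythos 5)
Your overall skeleton is the same as the paper's: condition on an initial segment of one curve, use the boundary data of the conditional field in the slit domain to show the other curve can only exit at the tip, deduce equality of the traces from a dense set of such tips, and upgrade to determination via conditional independence (your closing $0$--$1$ argument is fine, and is just a more explicit version of the paper's one-liner; the paper conditions on $\gamma'[0,\tau']$ and runs $\gamma$, you do the mirror image, which is equally good). One citation is off, though: in $D=\HH\setminus\gamma[0,\tau]$ the data of $-h-F$ is $\le-\lambda$ on one of the two boundary arcs and $\ge\lambda$ on the other, so the hypotheses of Lemma \ref{lemma::nothitting} (which needs the data to be $\ge-\lambda$ on the left half-line) are not met; the statement you need is Lemma \ref{lem::nonboundary_intersecting} (together with Lemma \ref{lem::nonboundary_intersecting_simple} for simplicity), which is precisely designed for data $\le-\lambda$ / $\ge\lambda$ on the two sides.

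The genuine gap is at the step you flag as the main obstacle. To get $\gamma'$ to actually reach $\gamma(\tau)$ you appeal to a target-independence property of level lines of the conditional field in $D$, whose boundary data is not piecewise constant; target independence is only known for piecewise constant data (the paper deliberately avoids it altogether), and your proposed extension ``by approximation as in Sections \ref{sec::weakcvg}--\ref{sec::existence}'' is a substantial unproven step. It would moreover require identifying $\gamma'$, given $\gamma[0,\tau]$, with \emph{the} level line of the conditional field and comparing it with a second level line targeted at $\gamma(\tau)$, which presupposes existence and uniqueness/determination statements in the random domain $D$ that are only established later in the paper. None of this is needed: the hypothesis that $\gamma'$ is transient already does the job. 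If $\gamma'$ never hit $\gamma[0,\tau]$ it would stay in $D$ and converge to $0$, hence to one of the two prime ends of $D$ at $0$; but each of these prime ends lies in the interior of a boundary arc of $D$ on which the data of $-h-F$ has constant sign ($\ge\lambda$ along $\R_-$ together with the adjacent side of $\gamma[0,\tau]$, and $\le-\lambda$ along the other side together with $\R_+$), so after mapping $D$ to $\HH$ with the tip $\gamma(\tau)$ sent to $\infty$, Lemma \ref{lem::nonboundary_intersecting} forbids the curve from approaching them. Hence $\gamma'$ must hit $\gamma[0,\tau]$, and by your earlier analysis the only admissible point is $\gamma(\tau)$. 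This is exactly the paper's one-step argument (with the roles of $\gamma$ and $\gamma'$ interchanged), and replacing your target-independence detour by it closes the gap.
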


\begin{proof}
First, we argue that, for any $\gamma'$-stopping time $\tau'$, given $\gamma'[0,\tau']$, the curve $\gamma$ almost surely first exits $\HH\setminus \gamma'[0,\tau']$ at $\gamma'(\tau')$. Denote by $\tilde{h}$ the restriction of $h$ to $\HH\setminus\gamma'[0,\tau']$. Given $\gamma'[0,\tau']$, the curve $\gamma$ is coupled with $h$ as a level line of $\tilde{h}+F$. The boundary value of $\tilde{h}+F$ is $F\le-\lambda$ on $\R_-$, is $-\lambda$ along the left side of $\gamma'[0,\tau']$, is $\lambda$ along the right side of $\gamma'[0,\tau']$, and is $F\ge\lambda$ on $\R_+$. Thus, by Lemma \ref{lem::nonboundary_intersecting}, we know that $\gamma$ must exit $\HH\setminus \gamma'[0,\tau']$ at $\gamma'(\tau')$.  

Next, we show that $\gamma$ and $\gamma'$ are equal. Since $\gamma$ hits $\gamma'[0,\tau']$ for the first time at $\gamma'(\tau')$ for any $\gamma'$-stopping time $\tau'$, we know that $\gamma$ hits a dense countable set of points along $\gamma'$ in reverse chronological order. By symmetry, $\gamma'$ hits a dense countable set of points along $\gamma$. Since both $\gamma$ and $\gamma'$ are continuous simple curves, the two curves (viewed as sets) are equal.
\end{proof}

\section{Monotonicity}\label{sec::mono}
\label{sec::monotonicity}
\begin{lemma}\label{lemma::absolutecontinuity}
Suppose that $h$ is a zero boundary  $\GFF$ and that $F$ is bounded. Suppose that $\gamma$ is a random continuous curve from 0 to some $\gamma$-stopping time $T$
 with almost surely continuous driving function. Assume that $\gamma$ is coupled with $h$ as a level line of $h+F$ up to time $T$. 
 \begin{enumerate}
 \item [(1)] Then the curve $(\gamma(t), 0\le t\le T)$ almost surely does not intersect any open interval $I$ of $(0,\infty)$ such that 
\[ F(x)\ge \lambda \quad \forall x\in I.\]
Symmetrically, it does not intersect any open interval of $(-\infty,0)$ where $F(x)\le -\lambda$.
\item [(2)] In addition, if $(\gamma(t), 0\le t\le T)$ is almost surely simple, then it does not hit any open interval $I$ of $(-\infty,0)$ where $F(x)\ge \lambda$.
Symmetrically, it does not intersect any open interval of $(0,\infty)$ where $F(x)\le -\lambda$. 
 \end{enumerate} 
\end{lemma}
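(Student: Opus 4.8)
The plan is to argue by contradiction, localise near the offending boundary interval via the domain Markov property, and then compare with a configuration to which Lemma~\ref{lemma::nothitting} (or its remark) applies, using the absolute continuity of the GFF under a change of boundary data (Proposition~\ref{prop::gffabscont}). By a countable union over rational intervals it suffices, for the first assertion of~(1), to fix a compact interval $J$ contained in an open interval $I'\subseteq(0,\infty)$ with $F\ge\lambda$ on $I'$, and to show $\gamma[0,T]\cap J=\emptyset$ almost surely; the assertion about intervals of $(-\infty,0)$ on which $F\le-\lambda$, and both assertions of~(2), are treated by the same scheme with the obvious changes of sign (and, for the $(-\infty,0)$ statements of~(1), using the remark after Lemma~\ref{lemma::nothitting} in place of the lemma itself). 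It also matches the piecewise-constant statements already recorded in Lemma~\ref{lem::results_piecewiseconstant}, which will serve as a guide.

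So suppose $\PP(\gamma[0,T]\cap J\neq\emptyset)>0$. Fix a simply connected neighbourhood $U$ of $J$ in $\overline{\HH}$ with $\overline U\cap\R\subseteq I'$, and for small $\epsilon>0$ let $\tau_\epsilon$ be the first time $\gamma$ comes within distance $\epsilon$ of $J$; on the above event $\tau_\epsilon<T$ for all small $\epsilon$, the set $\gamma[0,\tau_\epsilon]$ stays at distance $\ge\epsilon$ from $J$, and $\gamma(\tau_\epsilon)$ converges to $\overline J$ as $\epsilon\to0$. By Definition~\ref{def::gff_levelline}, given $\gamma[0,\tau_\epsilon]$ the restriction of $h+F$ to $H_{\tau_\epsilon}=\HH\setminus K_{\tau_\epsilon}$ is a zero-boundary GFF plus the harmonic function $\eta_{\tau_\epsilon}$, and $\gamma|_{[\tau_\epsilon,T]}$ is a level line of it; crucially, since $\gamma[0,\tau_\epsilon]$ has neither hit $J$ nor disconnected it from $\infty$, the boundary value of $\eta_{\tau_\epsilon}$ along $J$ is exactly $F\ge\lambda$, while $H_{\tau_\epsilon}$ coincides with $\HH$ on $U$. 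Now introduce an auxiliary zero-boundary GFF $\hat h$ on $\HH$ together with a bounded harmonic $\hat F$ whose boundary values agree with those of $\eta_{\tau_\epsilon}$ on $U\cap\R$ and satisfy $\hat F\ge-\lambda$ on $(-\infty,0)$ and $\hat F\ge\lambda$ on $[0,\infty)$ -- possible precisely because the value on $U\cap\R\subseteq I'$ is already at least $\lambda$. By Lemma~\ref{lemma::nothitting}, any level line $\hat\gamma$ of $\hat h+\hat F$ avoids $(0,\infty)$, hence $J$. On the other hand, since $H_{\tau_\epsilon}$ and $\HH$ coincide on $U$ and $\eta_{\tau_\epsilon}-\hat F\to0$ at $U\cap\R$, Proposition~\ref{prop::gffabscont}(2) gives that the laws of $(h+F)|_U$ (given $\gamma[0,\tau_\epsilon]$) and of $(\hat h+\hat F)|_U$ are mutually absolutely continuous. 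Because the coupling of a GFF with its level line, restricted to $U$ and to the curve run until it exits $U$ (or first hits $J$), can be built from the field on $U$ alone -- this is the content of the local-set construction behind Lemma~\ref{lem::coupling_iff_martingale} -- this absolute continuity passes to the pair (field on $U$, stopped curve). Hence, given $\gamma[0,\tau_\epsilon]$, the continuation $\gamma|_{[\tau_\epsilon,\cdot]}$ almost surely does not hit $J$ before leaving $U$. Letting $\epsilon\to0$, and shrinking $U$ with $\epsilon$, a first hit of $J$ by $\gamma$ would for small $\epsilon$ have to occur while $\gamma|_{[\tau_\epsilon,\cdot]}$ is still in $U$, a contradiction. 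This proves~(1).

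For~(2) the same localisation is used, but the construction of the comparison field now relies on simplicity, since for $J\subset(-\infty,0)$ with $F\ge\lambda$ nearby the boundary value has the "wrong" sign for the argument above. The point is that if a simple $\gamma$ touched such a $J$, then at its first hitting time $\gamma$ together with the real segment joining that point to $0$ would bound a Jordan subdomain; one analyses the GFF restricted to the complementary pieces -- whose boundary data near the hitting point is $-\lambda$ along one side of $\gamma$ and $\ge\lambda$ along the adjacent real arc -- and uses this to show that $\gamma$ can only reach the extremity of the $\{F\ge\lambda\}$-region (a continuation-threshold point) and never its interior $J$, contradicting the hypothesis; the symmetric statement for $(0,\infty)$ follows analogously. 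The main obstacle throughout is the transfer of absolute continuity from the field to the pair (field, level line) on $U$: one must verify that the level line, run until it exits $U$ (or hits $J$), is measurably determined by $h+F$ restricted to $U$ via the local-set property, so that Proposition~\ref{prop::gffabscont} may be applied to it, and one must track carefully the "before disconnection" proviso so that the relevant boundary value along $J$ is genuinely $F$; the geometric input needed for~(2) is the second delicate point.
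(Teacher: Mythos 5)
Your proposal for Item (1) hinges on the step where mutual absolute continuity of the \emph{fields} on a neighbourhood $U$ of $J$ (via Proposition \ref{prop::gffabscont}) is upgraded to absolute continuity of the pairs (field, stopped curve). That step is not available here, and it is not ``the content of the local-set construction behind Lemma \ref{lem::coupling_iff_martingale}'': that lemma characterises when a coupling exists via a martingale property; it says nothing about the curve being a measurable function of the field, let alone of the field restricted to $U$. In the setting of this lemma, $\gamma$ is merely \emph{some} continuous curve coupled with $h$ as a level line --- its law is not identified and it is not yet known to be determined by $h$. Determination is Theorem \ref{thm::gff_levelline_determination}, which is proved \emph{later} using the monotonicity machinery of Section \ref{sec::monotonicity}, which in turn rests on the present lemma; so invoking any (even local) determination here is circular. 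There is also a mismatch of objects: the auxiliary level line $\hat\gamma$ of $\hat h+\hat F$ emanates from $0$, whereas the continuation $\gamma|_{[\tau_\eps,\cdot]}$ emanates from $\gamma(\tau_\eps)$ in a different domain; ``the level line restricted to $U$'' is not a local functional of the field on $U$, which is precisely why the paper warns in the introduction that local couplings are a delicate open issue. The paper circumvents all of this by a different mechanism: it introduces an auxiliary level line $\gamma_G'$ of $-h-G$ \emph{in the same coupling}, with $G$ piecewise constant and $G\le F$, running from $b$ to $a$. For piecewise constant data $\gamma_G'$ is known to be continuous, determined by $h$, and to avoid $\tilde I$ (Lemma \ref{lem::results_piecewiseconstant}); conditioning on $\gamma[0,T_\eps]$ and applying Lemma \ref{lemma::nothitting} to $\gamma_G'$ in the complementary domain (where the relevant boundary data is $F-G\ge 0$) forces $\gamma_G'$ to come within $\eps$ of $\tilde I$, yielding the contradiction. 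Your use of Lemma \ref{lemma::nothitting} is applied to the wrong curve.

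For Item (2) the sketch is too vague to assess as a proof: ``one analyses the GFF restricted to the complementary pieces \dots and uses this to show that $\gamma$ can only reach the extremity of the $\{F\ge\lambda\}$-region'' essentially restates the conclusion without supplying the mechanism. The paper's actual argument is concrete: apply a M\"obius map sending $(b,0,\infty)\mapsto(\infty,0,1)$ so that Item (1) applies in the image and shows $\gamma$ cannot reach $\tilde I$ without first hitting the endpoint $b$; then, after an excursion into $\HH$, repeat the argument in the slit domain with $(a,b)$ replaced by $(a,w)$ ($w$ the leftmost real point visited so far) to conclude that reaching $\tilde I$ would force $\gamma$ to revisit $w$, contradicting simplicity. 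You would need to supply an argument of this kind; neither the Jordan-domain decomposition you allude to nor the absolute-continuity transfer fills this gap.
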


\begin{proof} [Proof of Lemma \ref{lemma::absolutecontinuity}, Item (1)]
We first show the conclusion when $I=(a,b)$ for $0<a<b$ and $F(x)\geq \lambda, \forall x\in I$. Pick $\tilde{a},\tilde{b}$ such that $a<\tilde{a}<\tilde{b}<b$. It is sufficient to show that, for any such $\tilde{a}, \tilde{b}$, the curve $(\gamma(t), 0\le t\le T)$ does not hit the interval $\tilde{I}=[\tilde{a}, \tilde{b}]$. 
We prove by contradiction. Suppose that the curve $(\gamma(t), 0\le t\le T)$ hits $\tilde{I}$ with positive probability.
Since $F$ is bounded, we have that $F\ge -C$ for some $C\ge \lambda$. Let $G$ be the bounded harmonic extension of the function which is equal to $-C$ on $\R_-\cup(0,a)\cup(b,\infty)$ and $\lambda$ on $(a,b)$. Note that $F\ge G$. Let $\gamma_G'$ be the level line of $-h-G$ from $b$ to $a$. Note that since $G$ is piecewise constant we know by Lemma \ref{lem::results_piecewiseconstant}(1) that the curve $\gamma_G'$ is continuous from $b$ to $a$, and the boundary data also means, by Lemma \ref{lem::results_piecewiseconstant}(2), that it does not hit $\tilde{I}$. 
This means we can repeat the same argument as in the proof of Lemma \ref{lem::nonboundary_intersecting} to show that $\gamma_G'$ hits $\tilde{I}$ with positive probability and obtain a contradiction. 
\end{proof}

\begin{proof}[Proof of Lemma \ref{lemma::absolutecontinuity}, Item (2)]
Now, let $I=(a,b)$ for $a<b<0$, and suppose that $F(x)\ge\lambda, \forall x\in I$ and $(\gamma(t), 0\le t\le T)$ is almost surely simple. It will be sufficient for us to prove that $\gamma$ does not hit $\tilde{I}=[\tilde{a},\tilde{b}]$ for any $a<\tilde{a}<\tilde{b}<b$. First note that if $\gamma$ hits $[-\infty,a]$ before hitting $I$, since $\gamma$ grows towards $\infty$, it can never hit $\tilde{I}$ thereafter and we are done. If not, let $\varphi$ be the M\"{o}bius transform of $\HH$ that sends the triplet $(b, 0, \infty)$ to $(\infty, 0, 1)$. Then $(\varphi(\gamma(t)), 0\le t\le T)$ is a continuous curve, coupled with a zero-boundary GFF $\tilde{h}$ as a level line of $\tilde{h}+F\circ\varphi^{-1}$ until the first time it hits $[1,\infty]$. By Item (1), we know that $(\varphi(\gamma(t)), 0\le t\le T)$ cannot hit the interval $(\varphi(a),\infty)$ before this time. Thus $(\gamma(t), 0\le t\le T)$ cannot hit $\tilde{I}$ without first hitting the point $b$.
Let $\tau$ be the time at which $\gamma$ hits $b$, setting $\tau=T$ if this never happens, and $\tau'$ be the first time at which $\gamma$ hits $\tilde{I}$, again setting $\tau'=T$ if necessary. By the previous reasoning, if we do not have $\{\tau<\tau'<T\}$ then we are done, so assume this occurs with positive probability. On this event, since $\gamma$ is a continuous curve with continuous Loewner driving function, we see that $\{\tau\leq t \leq \tau': \gamma(t)\in \R\}$ has Lebesgue measure 0, and so there exists a time $\tau<\sigma<\tau'$ with $\gamma(\sigma)\notin \R$. Let $w$ be the left-most point in $(\gamma(t),0\leq t\leq \sigma)\cap (-\infty,0)$. Then applying the same argument as above, now to $(\gamma(t),\sigma\leq t \leq T)$ in the domain $\HH \setminus (\gamma(t),0\leq t\leq \sigma)$ with $(a,b)$ replaced by $(a,w)$, we see that $(\gamma(t),\sigma\leq t \leq T)$ must first hit $w$ before it can hit $\tilde{I}$. This is a contradiction to the simplicity of $\gamma$. 
\end{proof}

\begin{lemma}\label{lemma::nothitendpoint}
Suppose that $h$ is a zero boundary  $\GFF$ and that $F$ is bounded. Suppose that $\gamma$ is a random continuous curve from 0 to some $\gamma$-stopping time $T$ with almost surely continuous driving function. Assume that $\gamma$ is coupled with $h$ as a level line of $h+F$ up to time $T$. 
\begin{enumerate}
\item [(1)] For any fixed point $x_0\in (0,\infty)$, if  there exists $c>0$ such that $F\ge -\lambda+c$ in a neighborhood of $x_0$, then 
the curve $(\gamma(t), 0\le t\le T)$ almost surely does not hit $\{x_0\}$.  Symmetrically, for any fixed point $x_0\in (-\infty,0)$, if  there exists $c>0$ such that $F\le \lambda-c$ in a neighborhood of $x_0$, then 
the curve $(\gamma(t), 0\le t\le T)$ almost surely does not hit $\{x_0\}$. 
\item [(2)] If there exists $X\in (0,\infty)$ and $c>0$ such that 
\[F(x)\ge\lambda,\quad \forall x\in (-X,\infty);\quad F(x)\ge -\lambda+c,\quad \forall x\in (X,\infty)\]
 and in addition the curve $(\gamma(t), 0 \leq t \leq T)$ is almost surely simple,
then $(\gamma(t), 0\le t\le T)$ almost surely does not hit $\infty$. Symmetrically, if there exists $X\in(0,\infty)$ and $c>0$ such that \[F(x)\le -\lambda,\quad \forall x\in (X,\infty);\quad F(x)\le \lambda-c,\quad \forall x\in (-X,\infty)\] and the curve $(\gamma(t), 0 \leq t \leq T)$ is almost surely simple, then $(\gamma(t), 0\le t\le T)$ almost surely does not hit $\infty$
\end{enumerate}
\end{lemma}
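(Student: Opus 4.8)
The plan is to run, for both items, the comparison scheme underlying Lemmas~\ref{lem::nonboundary_intersecting} and~\ref{lemma::absolutecontinuity}: bound $F$ from below by a piecewise constant function $G$, so that a suitably chosen level line $\gamma_G'$ built from $-h-G$ is continuous, has no continuation threshold, and --- by the piecewise constant results of Lemma~\ref{lem::results_piecewiseconstant} --- avoids the relevant boundary point; then condition on an initial segment of $\gamma$ and use Lemma~\ref{lemma::nothitting} (applicable since $F-G\ge 0$) to force $\gamma_G'$ to approach that point after all, a contradiction.

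For Item~(1): by the symmetry $h\mapsto -h$, $F\mapsto -F$ with reflection across the imaginary axis (cf.\ the remark after Lemma~\ref{lemma::nothitting}) it is enough to treat $x_0\in(0,\infty)$ with $F\ge-\lambda+c$ on $(x_0-2\delta,x_0+2\delta)$, $0<\delta<x_0$. Assume $\gamma$ hits $x_0$ with positive probability; on that event and for $0<\eps<\delta$ let $T_\eps\le T$ be the first time $\gamma$ comes within distance $\eps$ of $x_0$. Fix $C\ge\lambda$ with $F\ge-C$ on $\R$ and let $G$ be the bounded harmonic extension of the function equal to $-\lambda+c$ on $(x_0-\delta,x_0+\delta)$ and to $-C$ on its complement; then $G$ is piecewise constant and $G\le F$. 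Let $\gamma_G'$ be the level line of $-h-G$ from $x_0+\delta$ to $x_0-\delta$. Mapping $x_0+\delta$ to $0$ and $x_0-\delta$ to $\infty$ so that $(x_0-\delta,x_0+\delta)$ becomes $\R_-$, the boundary value of the image of $-h-G$ is $\lambda-c<\lambda$ on $\R_-$ (which contains the image of $x_0$) and $C\ge\lambda>-\lambda$ on $\R_+$; hence $\gamma_G'$ has no continuation threshold, and by Lemma~\ref{lem::results_piecewiseconstant}(1) and~(3) it is almost surely a continuous transient curve not hitting $x_0$. Working conditionally on $\gamma[0,T_\eps]$, let $D_\eps$ be the connected component of $\HH\setminus\gamma[0,T_\eps]$ whose boundary contains a neighbourhood of $x_0$ in $\R$ (note $\gamma[0,T_\eps]$ cannot disconnect $x_0$ from $\infty$) and set $\tilde h=h|_{D_\eps}$. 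By the domain Markov property of Definition~\ref{def::gff_levelline} and Propositions~\ref{propn::localsetunions} to~\ref{propn::morelocalsets}, given $\gamma[0,T_\eps]$ the curve $\gamma_G'$ can be coupled so that it is a level line of $-\tilde h-G$ in $D_\eps$ until it first meets $\gamma[0,T_\eps]$; since $F-G\ge 0$, the boundary value of $-\tilde h-G$ is $\ge\lambda$ along the left side of $\gamma[0,T_\eps]$ and $\ge-\lambda$ along its right side and along $\R\cap\partial D_\eps$. Lemma~\ref{lemma::nothitting} then forbids $\gamma_G'$ from reaching the left side of $\gamma[0,T_\eps]$, or the left part of $\R$, before reaching its right side or its tip $\gamma(T_\eps)$; and, exactly as in the proof of Lemma~\ref{lem::nonboundary_intersecting}, the topology of $\HH\setminus\gamma[0,T_\eps]$ then forces $\gamma_G'$ to within distance $\eps$ of $x_0$. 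Letting $\eps\downarrow 0$ and using continuity of $\gamma_G'$ yields that $\gamma_G'$ hits $x_0$ with positive probability, the desired contradiction.

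For Item~(2): using Lemma~\ref{lemma::absolutecontinuity}(1) with $F\ge\lambda$ on $(0,\infty)$, and Lemma~\ref{lemma::absolutecontinuity}(2) with $F\ge\lambda$ on $(-X,0)$ and the simplicity of $\gamma$, one first sees that $\gamma$ can meet $\R$ only in $\{0\}\cup(-\infty,-X]$. The statement that $\gamma$ does not hit $\infty$ is then established by a comparison in the same spirit as Item~(1), now organised around the point at infinity: one bounds $F$ below by a piecewise constant $G$ which is $\lambda$ on $(-X,\infty)$ and $-C$ on $(-\infty,-X)$, builds from $-h-G$ an auxiliary level line with finite endpoints which --- by Lemma~\ref{lemma::absolutecontinuity} together with the piecewise constant theory --- is almost surely bounded, and shows that if $\gamma$ reached $\infty$ at a finite time $\le T$, then conditioning on a long initial segment of $\gamma$ (up to the first time $|\gamma|>1/\eps$) and invoking Lemma~\ref{lemma::nothitting} would force this auxiliary curve to come within $\eps$ of $\{|z|=1/\eps\}$ for every $\eps$, contradicting its boundedness. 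The symmetric statement follows via $h\mapsto -h$, $F\mapsto -F$.

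I expect Item~(2) to be the main obstacle. A conformal map sends the target point $\infty$ of a level line again to the target, so one cannot literally reduce to Item~(1); one is forced to work with an auxiliary curve between \emph{finite} boundary points and to argue via its boundedness, and the genuinely delicate step is to show that this auxiliary curve does not itself approach $\infty$ --- Lemma~\ref{lemma::absolutecontinuity} rules out an entire neighbourhood of $\infty$, but pinning down the single boundary point $\infty$, and choosing the comparison boundary data (and the finite endpoints) so that the auxiliary level line is non-degenerate and has no continuation threshold, will require care. Beyond that, in both items the recurring technical point is the careful accounting of boundary values after conditioning on $\gamma[0,T_\eps]$: identifying the right connected component and verifying the two-sided inequalities needed to apply Lemma~\ref{lemma::nothitting}, together with reproducing the topological argument of Lemma~\ref{lem::nonboundary_intersecting}.
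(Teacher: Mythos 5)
Your Item (1) is, up to cosmetic changes, the paper's own proof: the paper likewise takes $G=-C$ off a small interval $(a,b)\ni x_0$, sets $G=(-\lambda+c)\wedge\lambda$ on $(a,b)$, runs the level line of $-h-G$ from $b$ to $a$, gets avoidance of $x_0$ from Lemma \ref{lem::results_piecewiseconstant}(3), and then ``repeats the argument of Lemma \ref{lem::nonboundary_intersecting}'' exactly as you do. The one slip is that you drop the truncation $\wedge\lambda$: if $c>2\lambda$ the boundary value of $-h-G$ on $(x_0-\delta,x_0+\delta)$ is $\lambda-c<-\lambda$, and the hypothesis of Lemma \ref{lemma::nothitting} (data $\geq-\lambda$ on the permitted side) fails as you invoke it after conditioning on $\gamma[0,T_\eps]$. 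This is harmless --- replace $c$ by $c\wedge\lambda$, or truncate as the paper does --- but it should be said.

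Item (2) is where there is a genuine gap, and it sits exactly at the step you defer (``choosing the comparison boundary data and the finite endpoints \dots will require care''): that choice \emph{is} the paper's proof, and the data you propose would not furnish it. The paper takes $G=-C$ on $(-X,0)\cup(0,X)$ and $G=(-\lambda+c)\wedge\lambda$ on the two unbounded intervals (this is where the lower bound $-\lambda+c$ on $F$ is used, so that $G\le F$ there), and the auxiliary curve is the level line of $-h-G$ from $-X$ to $X$. With this $G$ the data of $-h-G$ equals $C\ge\lambda$ on $(-X,0)\cup(0,X)$ and lies in $[-\lambda,\lambda)$ outside $[-X,X]$, so the curve from $-X$ to $X$ has no continuation threshold, is continuous, transient to $X$ (hence bounded) and avoids the point $\infty$ by Lemma \ref{lem::results_piecewiseconstant}(3); moreover $(-X,0)\cup(0,X)$, together with the left side of $\gamma[0,T_\eps]$ (conditioned data $\ge\lambda$ since $F\ge G$), is a barrier that the auxiliary curve cannot touch, so to reach $X$, or to first meet $\gamma[0,T_\eps]$ on its right side or tip, it must travel around the tip and hence leave the disc of radius $1/\eps$; letting $\eps\downarrow0$ contradicts boundedness. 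Your candidate $G$ (equal to $\lambda$ on $(-X,\infty)$ and $-C$ on $(-\infty,-X)$) cannot play this role: the data of $-h-G$ is then identically $-\lambda$ on all of $(-X,\infty)$, so a level line of $-h-G$ started (or targeted) at any point of $[-X,\infty)$ carries a force point of weight $-2$ at its starting point and hits its continuation threshold at time $0$, while placing both endpoints in $(-\infty,-X)$ removes all topological forcing, since such a curve never needs to cross over the base point $0$ of $\gamma$ at all. It is also telling that your $G$ makes no use of the hypothesis $F\ge-\lambda+c$ on the outer interval, which is precisely what is needed to keep $G\le F$ there while keeping the data of $-h-G$ away from the degenerate value on the real line near the endpoints. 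So Item (1) stands as written (modulo the truncation), but Item (2) as proposed is a strategy outline with its essential construction missing and its one concrete ingredient unusable.
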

We point out that Item (2) is not a consequence of Item (1) in Lemma \ref{lemma::nothitendpoint}. In fact, if $F$ is piecewise constant and $F(x)\in (-\lambda+c, \lambda-c)$ on $(-\infty,-X)\cup(X,\infty)$ for $c\in (0,\lambda)$, then the level line of $h+F$ is transient, and hence hits $\infty$ almost surely.
\begin{proof}[Proof of Lemma \ref{lemma::nothitendpoint}, Item (1)]
We may assume that $F\ge-\lambda+c$ on $(a,b)$ where $0<a<x_0<b$ and again prove by contradiction. 
Suppose that the curve $(\gamma(t), 0\le t\le T)$ does hit $\{x_0\}$ with some positive probability.
Since $F$ is bounded, suppose that $F\ge -C$ for some $C\ge \lambda$. Let $G$ be the function which is equal to $-C$ on $\R_-\cup (0,a)\cup (b,\infty)$, and is  $(-\lambda+c)\wedge\lambda$ on $(a,b)$. Note that $F\ge G$. Let $\gamma_G'$ be the level line of $-h-G$ from $b$ to $a$. Note that $\gamma_G'$ is continuous and does not the point $\{x_0\}$ by Lemma \ref{lem::results_piecewiseconstant}(3). Thus we can repeat the same argument as in the proof of Lemma \ref{lem::nonboundary_intersecting} and show that $\gamma_G'$ hits $\{x_0\}$ with positive probability, which is a contradiction.
\end{proof}

\begin{figure}[ht!]
\begin{subfigure}[b]{0.48\textwidth}
\begin{center}
\includegraphics[width=0.73\textwidth]{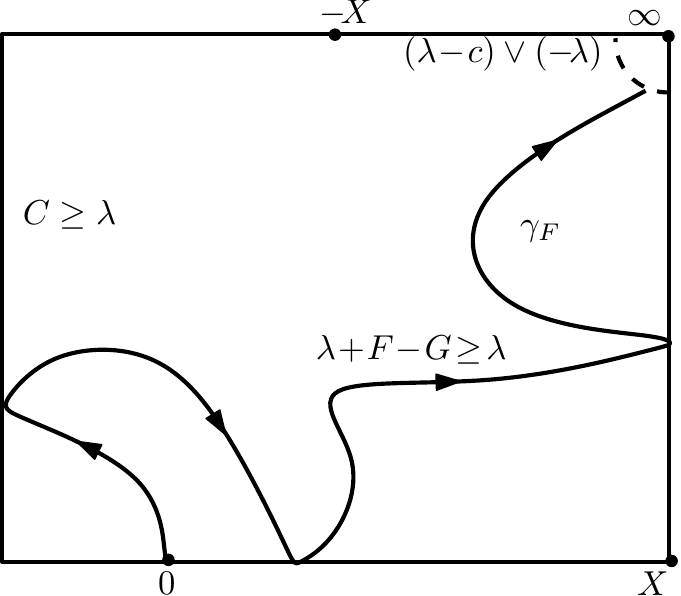}
\end{center}
\caption{Suppose that $\gamma_F$ hits $\infty$ with positive probability. Let $T_{\eps}$ be the first time that it enters $\{z: |z|>1/\eps\}.$
Since $F\ge \lambda$ on $(-\infty,-X)$, the curve $\gamma_F$ can never hit $(-\infty,-X)$. Given $\gamma_F[0,T_{\eps}]$, the boundary data of the field $-h-G$ is shown in this figure.}
\end{subfigure}
$\quad$
\begin{subfigure}[b]{0.48\textwidth}
\begin{center}\includegraphics[width=0.73\textwidth]{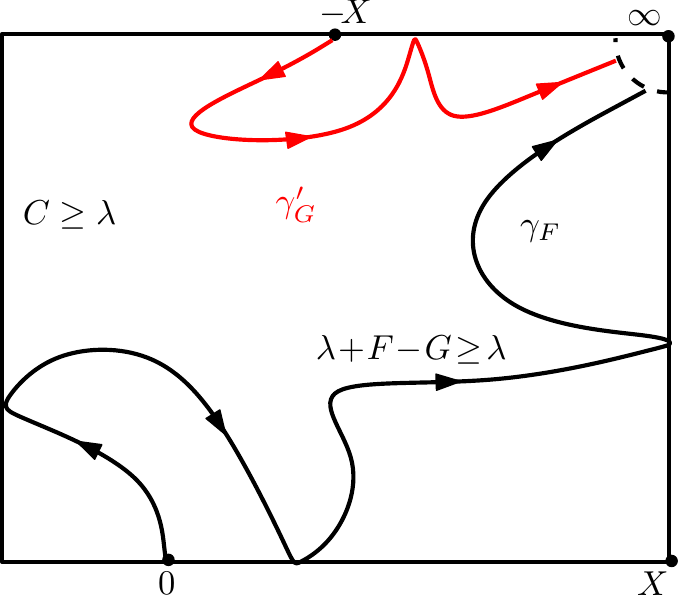}
\end{center}
\caption{By the choice of $G$, we see that $\gamma_G'$ cannot hit the union of $(-X,0)$ and the left side of $\gamma_F[0, T_{\eps}]$ before hitting $\gamma_F[0, T_{\eps}]$. Therefore, $\gamma_G'$ has to enter $\{z: |z|>1/\eps\}.$ This holds for all $\eps>0$, thus $\gamma_G'$ hits $\infty$ with positive chance, contradiction.}
\end{subfigure}
\caption{\label{fig::nohitting_target} Explanation of the boundary values in the proof of Lemma \ref{lemma::nothitendpoint}, Item (2).}
\end{figure}

\begin{proof}[Proof of Lemma \ref{lemma::nothitendpoint}, Item (2)]
We prove by contradiction. Suppose that $\gamma$ does hit $\infty$ with positive probability. 
Since $F$ is bounded, suppose that $F\ge -C$ for some $C\ge\lambda$. Let $G$ be the function which is equal to $-C$ on $(-X,0)\cup(0,X)$ and is $(-\lambda+c)\wedge \lambda$  on $(X,\infty)\cup(-\infty,-X)$. Note that $F\ge G$. Let $\gamma_G'$ be the level line of $-h-G$ from $-X$ to $X$. Since $G$ is piecewise constant, we know the curve $\gamma_G'$ is continuous and does not hit the point $\infty$. 
By Lemma \ref{lemma::absolutecontinuity}(2), since $\gamma$ is almost surely simple, we know that $\gamma$ cannot hit $(-X, \infty)$ before it hits $\infty$. Thus, we can repeat the same argument as in the proof of Lemma \ref{lem::nonboundary_intersecting} and show that $\gamma_G'$ hits $\infty$ with positive probability, contradiction. See more details in Figure \ref{fig::nohitting_target}.
\end{proof}

\begin{lemma}\label{lem::simplicity}
Suppose that $h$ is a zero boundary  $\GFF$ and that $F$ is bounded and satisfies Condition (\ref{eqn::inequalities}). Suppose that $\gamma$ is a random continuous transient curve from 0 to $\infty$ with almost surely continuous driving function. Assume that $\gamma$ is coupled with $h$ as a level line of $h+F$. Then $\gamma$ is almost surely simple. 
\end{lemma}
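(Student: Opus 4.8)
The plan is to mirror the strategy of Lemma~\ref{lem::nonboundary_intersecting_simple}, but in the boundary‑intersecting regime, substituting the non‑hitting results of Lemmas~\ref{lemma::absolutecontinuity} and~\ref{lemma::nothitendpoint} for Lemma~\ref{lem::nonboundary_intersecting}. First I would reduce to a countable family of ``no return'' events: if $\gamma$ is not simple then $\gamma(s)=\gamma(t)$ for some $s<t$, and using that $\gamma(0)=0$, $\gamma(\infty)=\infty$, that $\gamma$ is transient, and that $\gamma$ does not come back to a neighbourhood of $0$ in $\R$ (which follows from Lemma~\ref{lemma::nothitendpoint}(1) and Condition~(\ref{eqn::inequalities}) after a conformal change of coordinates, plus a short argument near time $0$), we may assume $0<s<t<\infty$; then for any rational $q\in(s,t)$ the sets $\gamma(0,q)$ and $\gamma(q,\infty)$ meet. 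Hence it suffices to show that for each fixed rational $q$, almost surely $\gamma(q,\infty)\cap\gamma(0,q)=\emptyset$. Fixing such a $q$, condition on $\gamma[0,q]$; by the domain Markov property of Definition~\ref{def::gff_levelline} the image $\tilde\gamma:=f_q(\gamma|_{[q,\infty)})$ is, given $\gamma[0,q]$, a continuous transient curve from $0$ to $\infty$ with continuous driving function, coupled with a zero boundary $\GFF$ $h'$ so that it is the level line of $h'+\eta^0_q$, where $\eta^0_q$ is the bounded harmonic function with boundary values as in Definition~\ref{def::gff_levelline}.

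The point is now that $\eta^0_q$ again satisfies the inequalities we need: on an interval of $\R_+$ (resp. $\R_-$) adjacent to $0$ it equals $\lambda$ (resp. $-\lambda$), and elsewhere it equals $F\circ f_q^{-1}$, which by Condition~(\ref{eqn::inequalities}) for $F$ is $\ge -\lambda+c$ on $\R_+$ and $\le \lambda-c$ on $\R_-$; so $\eta^0_q\ge -\lambda+(c\wedge 2\lambda)$ on all of $\R_+$ and $\eta^0_q\le \lambda-(c\wedge 2\lambda)$ on all of $\R_-$ (if in fact $F>\lambda$ on part of $\R_+$, then $\gamma$ does not meet that part by Lemma~\ref{lemma::absolutecontinuity}(1), and symmetrically on $\R_-$). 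Since $\tilde\gamma\subset\overline{\HH\setminus K_q}$, the curve $\gamma(q,\infty)$ can meet $\gamma(0,q)$ only along the part of $\gamma(0,q)$ lying on $\partial(\HH\setminus K_q)$, whose image under $f_q$ is a subset of $\R$. This accessible part splits into: (i) points on the two sides of $\gamma[0,q]$, whose images lie in the interval of $\R_+$ where $\eta^0_q=\lambda$, respectively the interval of $\R_-$ where $\eta^0_q=-\lambda$ — by Lemma~\ref{lemma::absolutecontinuity}(1) applied to the level line $\tilde\gamma$, $\tilde\gamma$ almost surely avoids both of these intervals; and (ii) real points that $\gamma[0,q]$ has touched, which are sent into the region where $\eta^0_q=F\circ f_q^{-1}$ — near the image of any such fixed point the boundary data of $h'+\eta^0_q$ is bounded below by $-\lambda+(c\wedge 2\lambda)$ on $\R_+$ (or above by $\lambda-(c\wedge 2\lambda)$ on $\R_-$), and Lemma~\ref{lemma::nothitendpoint}(1) shows $\tilde\gamma$ avoids it. Combining the two cases, $\tilde\gamma$ avoids the image of the accessible part of $\gamma(0,q)$; equivalently $\gamma(q,\infty)\cap\gamma(0,q)=\emptyset$ almost surely, and taking a union over $q\in\QQ_+$ gives that $\gamma$ is almost surely simple.

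I expect the genuine difficulty to lie in the bookkeeping for step~(ii): carefully describing $\partial(\HH\setminus K_q)$ when $\gamma[0,q]$ has already touched $\R$ or touched itself, and in particular excluding the ``overhang'' configuration in which a $(-\lambda)$‑valued side of $\gamma[0,q]$ sits adjacent, inside $\HH\setminus K_q$, to a previously visited point of $\R_+$ — a configuration not covered directly by Lemma~\ref{lemma::nothitendpoint}(1), and which should be ruled out by a reversed‑level‑line comparison argument of exactly the type carried out in the proof of Lemma~\ref{lem::nonboundary_intersecting}. The remaining points (continuity of the driving function of the continuation, the behaviour exactly at $0$, and the fact that the sets in (i)--(ii) exhaust the accessible portion of $\gamma(0,q)$) are routine.
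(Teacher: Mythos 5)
Your proposal is correct and follows essentially the same route as the paper: the paper's proof is literally to repeat the argument of Lemma \ref{lem::nonboundary_intersecting_simple} (conditioning at rational times, using the domain Markov property, and ruling out returns) with Lemma \ref{lem::nonboundary_intersecting} replaced by Lemmas \ref{lemma::absolutecontinuity}(1) and \ref{lemma::nothitendpoint}(1), which is exactly your steps (i) and (ii). The bookkeeping issues you flag are real but are also left implicit in the paper.
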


\begin{proof}
We can repeat the same argument as in the proof of Lemma \ref{lem::nonboundary_intersecting_simple} replacing Lemma \ref{lem::nonboundary_intersecting} by Lemmas \ref{lemma::absolutecontinuity}(1) and \ref{lemma::nothitendpoint}(1).
\end{proof}

\begin{lemma}\label{lem::mono_aux}
Suppose that  $F$ and $G$ are bounded, $F$ satisfies Condition (\ref{eqn::inequalities}), and that 
\[ F(x)\ge G(x), \quad \forall x\in\R.\]
Suppose that $\gamma_F$ (resp. $\gamma_G'$) is a random continuous transient curve from 0 to $\infty$ (resp. from $\infty$ to $0$) with almost surely continuous driving function.

Assume that $\gamma_F$ is coupled with a zero boundary  $\GFF$ $h$ as a level line of $h+F$ from 0 to $\infty$ and that $\gamma_G'$ is coupled with $h$ as a level line of $-h-G$ from $\infty$ to 0, and that the triple $(h,\gamma_F,\gamma_G')$ is coupled so that $\gamma_F$ and $\gamma_G'$ are conditionally independent given $h$. Then almost surely $\gamma_F$ stays to the left of $\gamma_G'$. 
\end{lemma}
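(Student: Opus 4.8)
The plan is to condition on the curve $\gamma_G'$ run up to a stopping time $\tau'$, identify the conditional law of $\gamma_F$ as that of a level line of an explicit GFF in the complement of $\gamma_G'[0,\tau']$, and use the hypothesis $F\ge G$ to see that this GFF has boundary data which forbids $\gamma_F$ from touching $\gamma_G'$ on the side facing $\R_+$; by Lemma~\ref{lemma::absolutecontinuity} this cannot happen, and letting $\tau'$ run to the endpoint of $\gamma_G'$ will give the ordering.

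First I would record that, by the coupling in Definition~\ref{def::gff_levelline} (together with Lemma~\ref{lem::coupling_iff_martingale}), both $\gamma_F$ and $\gamma_G'$ are local sets for $h$, and that $\gamma_F$ is a.s.\ simple by Lemma~\ref{lem::simplicity}. Fix a $\gamma_G'$-stopping time $\tau'$ strictly before $\gamma_G'$ reaches $0$, so that $0$ is at positive distance from $\gamma_G'[0,\tau']$. Using the conditional independence of $\gamma_F$ and $\gamma_G'$ given $h$ and Propositions~\ref{propn::localsetunions}--\ref{propn::morelocalsets}, conditionally on $\gamma_G'[0,\tau']$ the field $h+F$ restricted to the component $U$ of $\HH\setminus\gamma_G'[0,\tau']$ having $0$ on its boundary is a GFF with mean $\mathcal{C}_{\gamma_G'[0,\tau']}+F$, and $\gamma_F$, run until the first time it hits $\gamma_G'[0,\tau']$, is coupled with this field as its level line started from the (normal) boundary point $0$ and targeted at $\infty$.

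Next I would compute the boundary data of this conditional field. On $\partial U\cap\R$ it is $F$, hence $\le\lambda-c$ on $\R_-$ and $\ge-\lambda+c$ on $\R_+$. Since $\gamma_G'$ is a level line of $-h-G$ from $\infty$ to $0$, the conditional mean of $-h-G$ given $\gamma_G'[0,\tau']$ equals $+\lambda$ along the side of $\gamma_G'[0,\tau']$ facing $\R_-$ and $-\lambda$ along the side facing $\R_+$; hence $\mathcal{C}_{\gamma_G'[0,\tau']}+F$ equals $(F-G)-\lambda$ on the $\R_-$-facing side and $(F-G)+\lambda\ge\lambda$ on the $\R_+$-facing side, the inequality being exactly where $F\ge G$ is used. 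Mapping $U$ conformally to $\HH$ with $0\mapsto 0$ and the target of $\gamma_F$ to $\infty$, the $\R_+$-facing side of $\gamma_G'[0,\tau']$ is sent to a union of open intervals of $\R_+$ or of $\R_-$ on which the boundary data is $\ge\lambda$; by Lemma~\ref{lemma::absolutecontinuity}(1) in the former case, and by Lemma~\ref{lemma::absolutecontinuity}(2) (using that $\gamma_F$ is simple) in the latter case, $\gamma_F$ a.s.\ does not hit it. Thus $\gamma_F$ a.s.\ does not touch the $\R_+$-facing side of $\gamma_G'[0,\tau']$. Since $\gamma_F$ cannot cross $\gamma_G'[0,\tau']$ but only touch it, and cannot touch it on the $\R_+$-facing side, $\gamma_F$ stays on the $\R_-$ side of $\gamma_G'[0,\tau']$; letting $\tau'$ increase to the time at which the transient curve $\gamma_G'$ reaches $0$ and using continuity of $\gamma_G'$, we conclude that $\gamma_F$ stays to the left of $\gamma_G'$.

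The hard part is the soft part, not the GFF input. One has to justify carefully that, conditionally on $\gamma_G'[0,\tau']$, the curve $\gamma_F$ really is a level line of the conditional field up to the first hitting of $\gamma_G'[0,\tau']$, even though the target $\infty$ of $\gamma_F$ is the point from which $\gamma_G'$ is emitted and hence corresponds to two prime ends of $U$; this is handled using the target-independence of level lines together with local absolute continuity (Proposition~\ref{prop::gffabscont}) — so that the choice of prime end at $\infty$ does not affect the part of $\gamma_F$ under consideration. One also has to pass rigorously from ``$\gamma_F$ does not touch the $\R_+$-facing side of $\gamma_G'[0,\tau']$ for every $\tau'$'' to ``$\gamma_F$ stays to the left of $\gamma_G'$'', controlling $\gamma_F$ near the common endpoints $0,\infty$ and ruling out $\gamma_F$ sliding around the tip $\gamma_G'(\tau')$; as in the proof of Lemma~\ref{lemma::absolutecontinuity}(2), this is done by a first-time/measure-zero argument exploiting the continuity and simplicity of $\gamma_F$.
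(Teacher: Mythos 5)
Your first half matches the paper's strategy: condition on $\gamma_G'[0,\tau']$, note that $\gamma_F$ is coupled as a level line of the conditional field up to its \emph{first} hitting time of $\gamma_G'[0,\tau']$, and use $F\ge G$ to see that the boundary data on the side of $\gamma_G'[0,\tau']$ facing $\R_+$ is $\ge\lambda$, so that Lemma \ref{lemma::absolutecontinuity} forbids a first touch there. But the second half has a genuine gap. First, you never control the boundary point where the right side of $\gamma_G'[0,\tau']$ meets $\R_+$ (the point $x_G$ of the paper, which is $+\infty$ when $\gamma_G'[0,\tau']$ misses $(0,\infty)$): even without touching the right side, $\gamma_F$ could a priori escape to $\infty$ through the prime end lying between $\R_+$ and the starting point of $\gamma_G'$. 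Ruling this out is exactly the content of Lemma \ref{lemma::nothitendpoint}, Item (1) when $x_G$ is finite and Item (2) when $x_G=\infty$, and the paper explicitly warns that Item (2) is \emph{not} a consequence of the interval-hitting statement; your proposal never invokes this lemma, and the tools you name instead (target independence, Proposition \ref{prop::gffabscont}) do not address behaviour at $\infty$ and, in the case of target independence for general boundary data, are not established in the paper at all.

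Second, the step ``$\gamma_F$ cannot cross $\gamma_G'[0,\tau']$ but only touch it, and cannot touch the $\R_+$-facing side, hence stays on the $\R_-$ side'' is unjustified and is where the real work lies. The level-line description of $\gamma_F$ in $\HH\setminus\gamma_G'[0,\tau']$ is only valid up to the first hitting time $\tau$ of $\gamma_G'[0,\tau']$; after touching the left side, nothing you have said prevents $\gamma_F$ from wrapping around the tip $\gamma_G'(\tau')$ into the right component of $\HH\setminus(\gamma_F[0,\tau]\cup\gamma_G'[0,\tau'])$ without ever meeting the forbidden open arcs, since the tip is a single point and the passage is through the open domain. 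The measure-zero/first-hitting trick from the proof of Lemma \ref{lemma::absolutecontinuity}(2) does not apply here, because no forbidden boundary set must be hit on the way around the tip. The paper closes this scenario by a second conditioning: on the event that $\gamma_F$ enters the right component at distance $\ge\delta$ from $\gamma_G'[0,\tau']$ (time $\tau_\delta$), it conditions on $\gamma_G'[0,\tau']\cup\gamma_F[0,\tau_\delta]$ using Propositions \ref{propn::localsetunions}--\ref{propn::morelocalsets}, observes that the continuation of $\gamma_F$ is a level line of a field whose boundary data (again via Lemmas \ref{lemma::absolutecontinuity} and \ref{lemma::nothitendpoint}) prevents it from exiting that component through the right side of $\gamma_G'[0,\tau']$ or at $x_G$, and derives a contradiction with the assumed transience of $\gamma_F$. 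Some argument of this kind, using the hypothesis that $\gamma_F$ is transient, is needed; without it your conclusion does not follow.
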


\begin{proof}
Note that, by Lemma \ref{lem::simplicity}, $\gamma_F$ is almost surely simple. It is sufficient to show that, for any $\gamma'_G$-stopping time $\tau'$, the point $\gamma_G'(\tau')$ is to the right of $\gamma_F$. 

Let $\tilde{h}$ be $h$ restricted to $\HH\setminus \gamma_G'[0,\tau']$. Then we know that, given $\gamma_G'[0,\tau']$, the conditional law of $\tilde{h}+F$ is that of a GFF with boundary data as shown in Figure \ref{fig::generalmonotonicity}(a). Moreover, $\gamma_F$ is coupled as a level line of $\tilde{h}+F$ up until the first time it hits $\gamma'_G[0,\tau']$. Given $\gamma_G'[0,\tau']$, let $\tau$ be the first time that $\gamma_F$ hits $\gamma_G'[0,\tau']$. 

Consider the set $\gamma_G'[0,\tau']$, there are two possibilities for the intersection $\gamma_G'[0,\tau']\cap (0,\infty)$: Case (a), the intersection is nonempty, and in this case we denote by $x_G$ the last point in the intersection; Case (b), the intersection is empty and in this case we set $x_G=+\infty$ ie. to the right of $\gamma'_G[0,\tau']$.

\begin{figure}[ht!]
\begin{subfigure}[b]{0.48\textwidth}
\begin{center}
\includegraphics[width=0.73\textwidth]{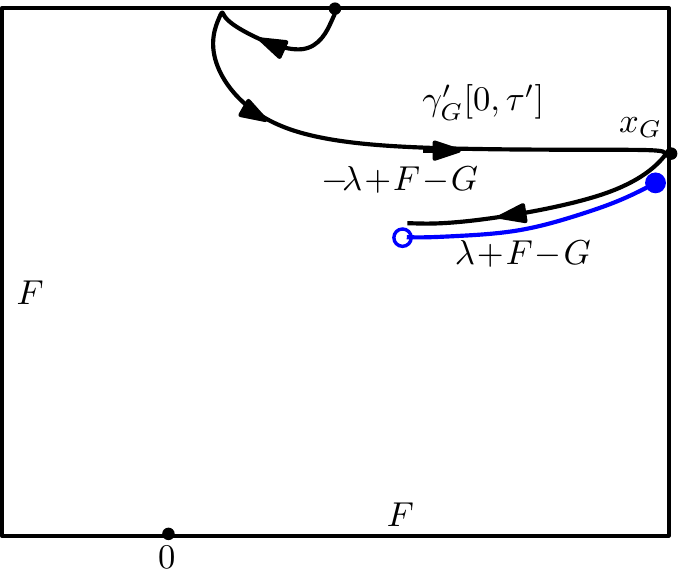}
\end{center}
\caption{}
\end{subfigure}
$\quad$
\begin{subfigure}[b]{0.48\textwidth}
\begin{center}\includegraphics[width=0.73\textwidth]{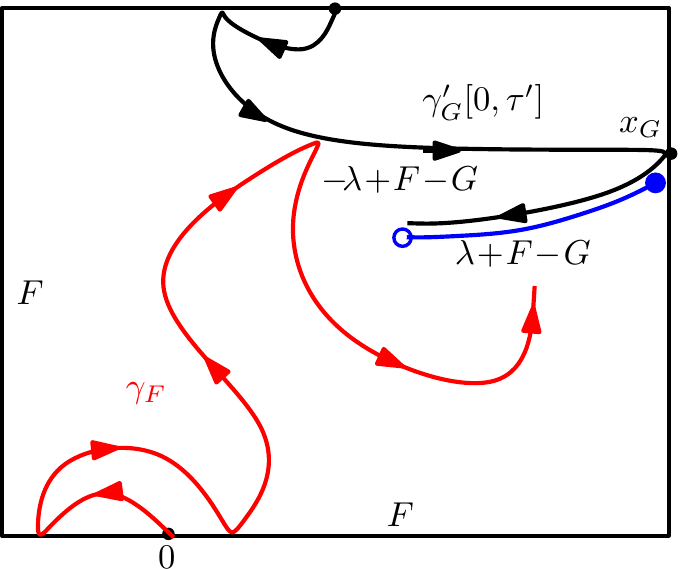}
\end{center}
\caption{}
\end{subfigure}
\caption{\label{fig::generalmonotonicity} The curve $\gamma_F$ cannot hit the blue section in the figure.}
\end{figure}

Since the boundary data on the right hand side of $\gamma_G'[0,\tau']$ is greater than $\lambda$ and the boundary data is bounded away from $-\lambda$ in a neighborhood of $x_G$, 
by Lemma \ref{lemma::absolutecontinuity} we know that $\gamma_F$ cannot hit the right hand side of $\gamma_G'[0,\tau']$ before hitting the left side of $\gamma_G'[0,\tau']$ or exiting $\HH$ at $\infty$, approaching from the left. We also know that $\gamma_F$ cannot hit $\{x_G\}$ before this time, by Lemma \ref{lemma::nothitendpoint}(1) in Case (a) and by Lemma \ref{lemma::nothitendpoint}(2) in Case (b). Therefore $\gamma_F$ cannot hit the union of the right hand side of $\gamma_G'[0,\tau']$ and $\{x_G\}$ (i.e.
the blue section in Figure \ref{fig::generalmonotonicity}(a)) of the boundary before hitting the left hand side of $\gamma_G'[0,\tau']$ or exiting $\HH$ at $\infty$, approaching from the left.
In the latter case, we are done. 
In the former case, $\gamma_F$ first hits $\gamma_G'[0,\tau']$ from its left hand side at time $\tau$.  If $\gamma_G'(\tau')$ is strictly to the left of $\gamma_F$, then it must be the case that after time $\tau$, $\gamma_F$ wraps around $\gamma_G'[0,\tau']$ and then hits the right hand side of $\gamma'_G[0,\tau']$ or exits at $x_G$. Let $\tau_\delta$ be the first time after $\tau$ that $\gamma_F$ is in the right connected component of $\HH\setminus\left(\gamma_F[0,\tau]\cup \gamma'_G[0,\tau']\right)$ and $\dist(\gamma_F(t),\gamma'_G[0,\tau])\geq \delta$, setting $\tau_\delta=\infty$ if this never happens. If $\gamma_G'(\tau')$ is strictly to the left of $\gamma_F$ (so in particular not on the curve $\gamma_F$) with positive probability then we know that $\{\tau_\delta < \infty\}$ occurs with strictly positive probability. However, given $\gamma_G'[0,\tau']\cup \gamma_F[0,\tau_\delta]$, the conditional law of $h+F$ is that of a GFF with boundary values as shown in Figure \ref{fig::generalmonotonicity}(b), and $(\gamma_F(t),t\geq \tau_\delta)$ is a level line of this field  (by  Propositions \ref{propn::localsetunions} to \ref{propn::morelocalsets}.) Therefore, by Lemmas \ref{lemma::absolutecontinuity} and \ref{lemma::nothitendpoint} again, we know that it cannot hit the right hand side of $\gamma'_G[0,\tau']$ or exit at $x_G$, and hence cannot reach $\infty$. Thus we obtain a contradiction.
\end{proof}

\begin{lemma}\label{lem::reversibility_aux}
Suppose that $F$ is bounded and satisfies Condition (\ref{eqn::inequalities}). 
Suppose that $\gamma_F$ (resp. $\gamma_F'$) is a random continuous transient curve from $0$ to $\infty$ (resp. from $\infty$ to $0$) with almost surely continuous driving functions.

Assume that $\gamma_F$ is coupled with a zero boundary  $\GFF$ $h$ as a level line of $h+F$ from 0 to $\infty$, that $\gamma_F'$ is coupled with $h$ as a level line of $-h-F$ from $\infty$ to 0, and that the triple $(h,\gamma_F,\gamma_F')$ is coupled so that $\gamma_F$ and $\gamma_F'$ are conditionally independent given $h$. Then almost surely $\gamma_F=\gamma_F'$. In particular, $\gamma_F$ is almost surely determined by $h$.
\end{lemma}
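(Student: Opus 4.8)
The plan is to mirror the determination argument used in the non\nobreakdash-boundary\nobreakdash-intersecting regime (the Proposition following Lemma~\ref{lem::nonboundary_intersecting_simple}) and for piecewise constant data in \cite{wwll1}, but feeding in the generalised monotonicity of Lemma~\ref{lem::mono_aux} in place of the old inputs. Concretely, I would show that $\gamma_F$ lies both to the left \emph{and} to the right of $\gamma_F'$ almost surely; since $\gamma_F$ is almost surely simple by Lemma~\ref{lem::simplicity} (and, as noted below, so is $\gamma_F'$), a standard planar topology argument then forces $\gamma_F = \gamma_F'$ as sets. The first containment is immediate: applying Lemma~\ref{lem::mono_aux} with $G = F$ --- which is legitimate because $F$ satisfies Condition~(\ref{eqn::inequalities}), trivially $F \ge F$, both curves are continuous and transient with a.s.\ continuous driving function, and $\gamma_F, \gamma_F'$ are conditionally independent given $h$ --- gives that $\gamma_F$ stays to the left of $\gamma_F'$ almost surely.

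\textbf{The reverse containment.} For the opposite bound I would use the reflection $\psi(z) = -\bar z$ of $\HH$ across the imaginary axis. The crucial point is that $\psi$ fixes $0$ and $\infty$ but interchanges $\R_-$ and $\R_+$, so that $\hat F := -F\circ\psi$ again satisfies Condition~(\ref{eqn::inequalities}): for $x<0$ we have $\hat F(x) = -F(-x)$ with $-x>0$, hence $\hat F(x) \le \lambda - c$, and symmetrically $\hat F(x) \ge -\lambda + c$ for $x\ge 0$. Put $\hat h := -h\circ\psi$, again a zero boundary $\GFF$. Since $\psi$ is anti-conformal, applying $\psi$ to a level line and negating the field exchanges the two sides that carry conditional mean $\pm\lambda$ --- this is a direct consequence of Definition~\ref{def::gff_levelline}, as the harmonic function $\eta_\tau$ (with the $\pm\lambda$ boundary values of Figure~\ref{fig::etaboundaryvalues}) is pulled back by the anti-conformal map and its $\pm\lambda$ values change sign. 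Tracking the $\R_\pm$-adjacent sides through $\psi$, one obtains that $\psi(\gamma_F)$ is a level line of $\hat h + \hat F$ from $0$ to $\infty$ and $\psi(\gamma_F')$ is a level line of $-\hat h - \hat F$ from $\infty$ to $0$; continuity, transience and continuity of the driving function are preserved by $\psi$, and $\psi(\gamma_F), \psi(\gamma_F')$ remain conditionally independent given $\hat h$ (since $\sigma(\hat h) = \sigma(h)$ and $\psi$ is a fixed bijection). Now Lemma~\ref{lem::mono_aux} applied to $\hat h$ with $G = \hat F$ gives that $\psi(\gamma_F)$ stays to the left of $\psi(\gamma_F')$; applying $\psi = \psi^{-1}$ once more, which being orientation reversing turns ``left of'' into ``right of'', we conclude that $\gamma_F$ stays to the right of $\gamma_F'$ almost surely.

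\textbf{Conclusion and determination.} To finish, $\gamma_F$ is a.s.\ simple by Lemma~\ref{lem::simplicity}, and $\gamma_F'$ is a.s.\ simple as well: pushing it forward by the anti-conformal involution $z \mapsto 1/\bar z$ (which fixes $\R_\pm$ setwise and interchanges $0$ and $\infty$) turns $\gamma_F'$ into a level line of $h\circ\iota + F\circ\iota$ from $0$ to $\infty$ with $F\circ\iota$ again satisfying Condition~(\ref{eqn::inequalities}) --- as $\iota$ preserves the sign of the real coordinate --- so Lemma~\ref{lem::simplicity} applies. Since $\gamma_F$ lies both to the left and to the right of the simple curve $\gamma_F'$, it is contained in $\gamma_F'$; being itself a simple arc from $0$ to $\infty$ inside the arc $\gamma_F'$, which has the same endpoints, it must equal $\gamma_F'$. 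Finally, $\gamma_F = \gamma_F'$ together with the conditional independence of $\gamma_F$ and $\gamma_F'$ given $h$ makes $\gamma_F$ conditionally independent of itself given $h$; hence the conditional law of $\gamma_F$ given $h$ is almost surely a point mass, i.e.\ $\gamma_F$ is almost surely a measurable function of $h$ (equivalently, one may resample a conditionally independent second copy of $\gamma_F$ and of $\gamma_F'$ given $h$ and chain the two equalities).

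\textbf{Main obstacle.} The delicate part is the bookkeeping in the reflection step: one must use the \emph{reflection} $\psi(z) = -\bar z$ rather than the more obvious direction-reversing inversion $z\mapsto -1/z$, precisely because only the former, combined with negating the field, preserves Condition~(\ref{eqn::inequalities}) (under $z\mapsto -1/z$ the roles of $\R_-$ and $\R_+$ are swapped but the field is \emph{not} negated, so the inequalities flip the wrong way and Lemma~\ref{lem::mono_aux} no longer applies). One also has to verify carefully that the level-line property of Definition~\ref{def::gff_levelline}, and continuity/transience of the curve and of its driving function, transform correctly (with the appropriate sign change on the field) under an anti-conformal map, and that conditional independence is preserved; the final topological step deducing set-equality from a two-sided sandwich is routine.
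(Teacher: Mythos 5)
Your proposal is correct and follows essentially the same route as the paper: apply Lemma \ref{lem::mono_aux} with $G=F$ to get that $\gamma_F$ lies to the left of $\gamma_F'$, use the mirror-symmetric argument (which you make explicit via the reflection $z\mapsto-\bar z$ with field negation, where the paper simply says ``by the same arguments'') for the other side, then combine with simplicity from Lemma \ref{lem::simplicity} and the conditional independence given $h$ to conclude $\gamma_F=\gamma_F'$ and determination. Your extra bookkeeping (checking Condition (\ref{eqn::inequalities}) is preserved, and the simplicity of $\gamma_F'$ via an anti-conformal change of target) just fills in details the paper leaves implicit.
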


\begin{proof}
By Lemma \ref{lem::mono_aux}, we know that $\gamma_F$ almost surely stays to the left of $\gamma_F'$ and (by the same arguments) that $\gamma_F$ almost surely stays to the right of $\gamma_F'$. Combining with the fact that $\gamma_F, \gamma_F'$ are simple by Lemma \ref{lem::simplicity}, we know that  almost surely $\gamma_F=\gamma_F'$. Since $\gamma_F$ and $\gamma_F'$ are coupled with $h$ so that they are conditionally independent given $h$, $\gamma_F=\gamma_F'$ implies that $\gamma_F$ must be almost surely determined by $h$. 
\end{proof}

\begin{lemma}\label{lem::mono_continuitytransience}
Suppose that $F$ and $G$ are bounded, and satisfy Condition (\ref{eqn::inequalities}).
Suppose further that 
\[F(x)\geq G(x), \quad x\in \R.\]

Suppose that $\gamma_F, \gamma_G$ (resp. $\gamma_G'$) are random continuous transient curves from $0$ to $\infty$ (resp. from $\infty$ to 0) with almost surely continuous driving functions. 

Assume that $\gamma_F$ (resp. $\gamma_G$) is coupled with a zero boundary  $\GFF$ $h$ as a level line of $h+F$ (resp. $h+G$), that $\gamma_G'$ is coupled with $h$ as a level line of $-h-G$ from $\infty$ to 0, and that $(h,\gamma_F,\gamma_G, \gamma_G')$ is coupled so that $\gamma_F$, $\gamma_G$ and $\gamma_G'$ are conditionally independent given $h$. Then almost surely $\gamma_F$ stays to the left of $\gamma_G$.
\end{lemma}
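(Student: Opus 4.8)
The plan is to deduce the monotonicity of $\gamma_F$ relative to $\gamma_G$ from the auxiliary monotonicity Lemma \ref{lem::mono_aux}, which compares a level line with a reverse level line, together with the reversibility Lemma \ref{lem::reversibility_aux}. The key observation is that $\gamma_G'$, the level line of $-h-G$ from $\infty$ to $0$, is by Lemma \ref{lem::reversibility_aux} almost surely equal to (and determined by $h$ in the same way as) a forward level line of $h+G$. So it suffices to compare $\gamma_F$ with $\gamma_G'$ and then identify $\gamma_G'$ with $\gamma_G$.

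First I would apply Lemma \ref{lem::mono_aux} directly to the pair $(F,G)$: since $F\ge G$ on $\R$ and $F$ satisfies Condition (\ref{eqn::inequalities}), and since $\gamma_F$ is coupled with $h$ as a level line of $h+F$ and $\gamma_G'$ is coupled with $h$ as a level line of $-h-G$, with $\gamma_F$ and $\gamma_G'$ conditionally independent given $h$, the hypotheses of Lemma \ref{lem::mono_aux} are met. Hence $\gamma_F$ almost surely stays to the left of $\gamma_G'$. Next I would invoke Lemma \ref{lem::reversibility_aux} applied to the boundary data $G$ (which is bounded and satisfies Condition (\ref{eqn::inequalities})): the reverse level line $\gamma_G'$ of $-h-G$ equals, almost surely, the forward level line of $h+G$, and in particular $\gamma_G'$ is almost surely determined by $h$. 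Since $\gamma_G$ is also a level line of $h+G$ and is determined by $h$ (again by Lemma \ref{lem::reversibility_aux}, or by the determination already established), we conclude $\gamma_G=\gamma_G'$ almost surely. Combining, $\gamma_F$ stays to the left of $\gamma_G$ almost surely.

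The one point requiring care is the coupling bookkeeping: Lemma \ref{lem::mono_aux} needs $\gamma_F$ and $\gamma_G'$ conditionally independent given $h$, which is part of the hypothesis here; and to identify $\gamma_G'$ with $\gamma_G$ via Lemma \ref{lem::reversibility_aux} one wants the reverse level line of $-h-G$ and a forward level line of $h+G$ in the same coupling. This is handled by noting that both $\gamma_G$ and $\gamma_G'$ are a.s.\ determined by $h$ (Lemma \ref{lem::reversibility_aux} gives the determination for $\gamma_G$; applying it with the roles of forward and reverse swapped, or just its conclusion $\gamma_F=\gamma_F'$ with $F$ replaced by $G$, gives that the reverse level line of $-h-G$ coincides with the forward level line of $h+G$), so the identity $\gamma_G=\gamma_G'$ holds regardless of how the auxiliary independent copies were introduced. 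The main obstacle, such as it is, is therefore purely organizational — making sure the chain of couplings is set up consistently — rather than any new probabilistic input, since all the substantive work (non-boundary-intersection estimates, simplicity, the $\gamma_F$-versus-reverse comparison) has already been done in Lemmas \ref{lem::nonboundary_intersecting} through \ref{lem::reversibility_aux}.
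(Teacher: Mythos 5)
Your proposal is correct and follows the same two-step route as the paper: apply Lemma \ref{lem::mono_aux} to conclude $\gamma_F$ stays left of $\gamma_G'$, then apply Lemma \ref{lem::reversibility_aux} (with boundary data $G$) to identify $\gamma_G=\gamma_G'$, and combine. The coupling bookkeeping you worry about is exactly what the hypothesis of conditional independence of $\gamma_F,\gamma_G,\gamma_G'$ given $h$ provides, so no further argument is needed.
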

\begin{proof}
We have the following observations.
\begin{itemize}
\item By Lemma \ref{lem::mono_aux}, we know that $\gamma_F$ stays to the left of $\gamma_G'$.  
\item By Lemma \ref{lem::reversibility_aux}, we know that $\gamma_G=\gamma_G'$.
\end{itemize}
Combining these two facts, we see that $\gamma_F$ stays to the left of $\gamma_G$.
\end{proof}

\begin{corollary}\label{prop::generalmonotonicty}
Suppose that  $F$ and $G$ are piecewise constant functions changing value only finitely many times and that they satisfy Condition (\ref{eqn::inequalities}). Suppose further that 
\[F(x)\geq G(x), \quad x\in \R.\]
Let $\gamma_F$ (resp. $\gamma_G$) be the level line of $h+F$ (resp. $h+G$) for $h$ a zero boundary $\GFF$ as in Section \ref{subsec::levelline_piecewiseconstant}. Then almost surely $\gamma_F$ stays to the left of $\gamma_G$. 
\end{corollary}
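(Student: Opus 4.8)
The plan is to obtain the statement as a direct consequence of Lemma \ref{lem::mono_continuitytransience}; essentially all that is needed is to check that its hypotheses are met by the level lines with piecewise constant boundary data coming from \cite{msig1, wwll1}. Recall from Section \ref{subsec::levelline_piecewiseconstant} that for a piecewise constant boundary function changing value only finitely many times the level line exists, is a.s. determined by the field, is a.s. continuous up to and including its continuation threshold (with driving function the continuous driving function of the associated $\SLE_4(\underline{\rho})$ process), and is transient on the event that the continuation threshold is not reached. Since $F$ and $G$ take finitely many finite values they are bounded, and by hypothesis they satisfy Condition (\ref{eqn::inequalities}) and $F\ge G$ on $\R$; so the conditions on $F,G$ in Lemma \ref{lem::mono_continuitytransience} already hold.

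Next I would check that the continuation threshold is a.s. never reached, for both $F$ and $G$. By the correspondence (\ref{eqn::frho}) and the definition of the continuation threshold in Section \ref{subsec::slerho}, the level line of $h+F$ is stopped only when it first hits an interval of $\R_-$ on which $F>\lambda$ or an interval of $\R_+$ on which $F<-\lambda$ (as recalled in the discussion after Theorem \ref{thm::gff_levelline_coupling}). But Condition (\ref{eqn::inequalities}) forces $F\le\lambda-c<\lambda$ on $\R_-$ and $F\ge-\lambda+c>-\lambda$ on $\R_+$, so no such interval exists; hence the continuation threshold of the level line of $h+F$ is $+\infty$, and likewise for $h+G$. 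Therefore $\gamma_F$ and $\gamma_G$ are a.s. continuous transient curves from $0$ to $\infty$ with a.s. continuous driving functions, and both are a.s. determined by $h$.

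It then remains to supply the auxiliary curve $\gamma_G'$ required by Lemma \ref{lem::mono_continuitytransience}, namely the level line of $-h-G$ from $\infty$ to $0$. Let $\varphi(z)=-1/z$, a conformal automorphism of $\HH$ with $\varphi(0)=\infty$, $\varphi(\infty)=0$, and $\varphi^{-1}=\varphi$; by definition $\varphi(\gamma_G')$ is the level line from $0$ to $\infty$ of a zero boundary $\GFF$ plus the bounded harmonic extension of $x\mapsto -G(\varphi^{-1}(x))=-G(-1/x)$. This boundary function changes value only finitely many times, and, using $G\le\lambda-c$ on $\R_-$ and $G\ge-\lambda+c$ on $\R_+$, one checks that for $x<0$ it equals $-G(-1/x)\le\lambda-c$ and for $x>0$ it equals $-G(-1/x)\ge-\lambda+c$, i.e. it again satisfies Condition (\ref{eqn::inequalities}). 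Applying the previous paragraph to it, $\gamma_G'$ is a.s. a continuous transient curve from $\infty$ to $0$ with a.s. continuous driving function, and it is a.s. determined by $h$.

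Finally, since $\gamma_F$, $\gamma_G$ and $\gamma_G'$ are each a.s. determined by $h$, in the coupling in which they all appear as the corresponding level lines they are, conditionally on $h$, deterministic, hence trivially conditionally independent given $h$. Thus all hypotheses of Lemma \ref{lem::mono_continuitytransience} hold with these $F$, $G$, $\gamma_F$, $\gamma_G$, $\gamma_G'$, and the lemma gives that $\gamma_F$ a.s. stays to the left of $\gamma_G$; as ``the'' level lines in the statement of the corollary are precisely these $h$-measurable curves, this is exactly the claim. I expect the only point requiring any care to be the verification that no continuation threshold is hit --- which is exactly where Condition (\ref{eqn::inequalities}) is used --- together with the elementary bookkeeping that $-G\circ\varphi^{-1}$ still satisfies (\ref{eqn::inequalities}); everything else is immediate from the cited results and Lemma \ref{lem::mono_continuitytransience}.
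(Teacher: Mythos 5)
Your proposal is correct and follows essentially the same route as the paper: both obtain the statement as an application of Lemma \ref{lem::mono_continuitytransience}, using the piecewise constant theory of Section \ref{subsec::levelline_piecewiseconstant} to supply the continuity, transience and $h$-measurability of $\gamma_F$, $\gamma_G$ and the reversed level line $\gamma_G'$ of $-h-G$, with determination by $h$ giving the required conditional independence. Your extra bookkeeping (no continuation threshold under Condition (\ref{eqn::inequalities}), and that $-G\circ\varphi^{-1}$ again satisfies it) is just a spelled-out version of what the paper leaves implicit.
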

\begin{proof}
From the results in Section \ref{subsec::levelline_piecewiseconstant}, we have the existence, the continuity and transience of $\gamma_F$ and $\gamma_G$, and also $\gamma_G'$ which is the level line of $-h-G$ from $\infty$ to 0. Moreover, we know that each of $\gamma_F, \gamma_G$ and $\gamma_G'$ is almost surely determined by $h$. By Lemma \ref{lem::mono_continuitytransience}, we know that $\gamma_F$ stays to the left of $\gamma_G$ almost surely. 
\end{proof}

\section{Estimates on crossing probabilities}
\label{sec::weakcvg}

In this section, we will consider $\SLE_4(\underline{\rho}^L;\underline{\rho}^R)$ processes for vectors 
\[\underline{\rho}^L=(\rho^L_l, \cdots, \rho^L_1), \quad \quad \underline{\rho}^R=(\rho_1^R, \cdots, \rho_r^R),\]
with associated force points 
\[\underline{x}^L=(x^L_l<\cdots<x^L_1\leq 0), \quad \quad \underline{x}^R=(0\leq x^R_1<\cdots x^R_r), \] such that for some $c>0,C<\infty$,
 \begin{equation}
\label{eqn::discreteconditions} 
-2+\frac{c}{\lambda} \le \sum_{i=1}^j \rho_i^L \le -1+\frac{C}{\lambda},\quad 1\le j\le l, \quad 
 -2+\frac{c}{\lambda} \leq \sum_{i=1}^k  \rho_i^R \leq -1 + \frac{C}{\lambda},\quad 1\le k\le r.
 \end{equation} 

We will show that if $(\gamma^{(n)})_{n\in \N}$ are a family $\SLE_4(\underline{\rho}^L;\underline{\rho}^R)$ processes as above (with the same $c, C$), then they satisfy Condition \ref{cond::quadcross}. Here, we know that the processes are generated by continuous curves, due to the results of \cite{msig1, wwll1}.

Note that these processes correspond to level lines of $(h+F_n)_{n\in \N}$ for $h$ a zero boundary GFF, where Condition (\ref{eqn::discreteconditions}) means that the $F_n$'s are uniformly bounded  (lying in $(-C,C)$) and satisfy, for all $n\ge 0$,
\[F_n(x) \le \lambda - c, \quad x<0; \quad F_n(x) \ge -\lambda + c, \quad x\geq 0.\]
These are the same conditions we require on $F$ in Theorem \ref{thm::gff_levelline_coupling}. Therefore, the tactic will be to approximate such an $F$ by piecewise constant functions $F_n$ on $\R$, and show that the laws of the corresponding $\SLE_4(\underline{\rho}^{L,n}; \underline{\rho}^{R,n})$ processes converge weakly using Proposition \ref{propn::convergenceofcurves}. This limiting law will be our candidate for the level line of $h+F$. 

\begin{lemma}\label{lemma::crossingprobs}
Suppose that $(\gamma^{(n)})_{n\in \N}$ are a family of $\SLE_4(\underline{\rho}^L, \underline{\rho}^R)$ processes satisfying Condition (\ref{eqn::discreteconditions}) for some $c>0,C<\infty$ and all $n$. Then they satisfy Condition \ref{cond::quadcross}. 
\end{lemma}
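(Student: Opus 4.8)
The plan is to prove the following uniform estimate, from which Condition \ref{cond::quadcross} follows by a standard serial–decomposition: there are $M_0<\infty$ and $p<1$, depending only on $c$ and $C$, so that for every process $\gamma$ in the family, every $\gamma$-stopping time $\tau$, and every avoidable quadrilateral $Q$ of $H_\tau=\HH\setminus\gamma[0,\tau]$ with $m(Q)>M_0$,
\[\PP\big(\gamma[\tau,\infty)\text{ crosses }Q\mid\gamma[0,\tau]\big)\le p.\]
Indeed, passing to the conformal rectangle, a quadrilateral of modulus $>nM_0$ decomposes into $n$ quadrilaterals in series, each of modulus $>M_0$ and each still avoidable for the curve once the earlier pieces have been crossed (one checks this from the definition of avoidability: the crossing excursions do not disconnect the tip from $\infty$ in the remaining domain); applying the estimate at the successive stopping times at which the curve completes each crossing gives total probability $\le p^{n}$, and one takes $M=nM_0$ with $p^{n}\le 1/2$. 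By the domain Markov property (Definition \ref{def::gff_levelline}) and conformal invariance of the Loewner chain it suffices to take $\tau=0$: each member is the level line of $h+F_n$ for a zero boundary $\GFF$ $h$, with $F_n$ piecewise constant with finitely many jumps, $F_n\le\lambda-c$ on $\R_-$, $F_n\ge-\lambda+c$ on $\R_+$, and $|F_n|\le C$; and $Q\subset\HH$ is avoidable (its domain $V$ does not disconnect $0$ from $\infty$), with $S_1,S_3\subset\R$, $S_0,S_2\subset\HH$, and $m(Q)>M_0$. One must bound $\PP(\gamma\text{ crosses }Q)$ uniformly in $n$.

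Fix a small $\delta>0$ depending only on $C$, and first consider crossings whose excursion stays at distance $\ge\delta\operatorname{diam}(S_0\cup S_2)$ from $\R$. On a slightly smaller quadrilateral of comparable modulus, Proposition \ref{prop::gffabscont}(1) shows that the law of $\gamma$ there is mutually absolutely continuous with that of chordal $\SLE_4$ (the level line with boundary data $\pm\lambda$), with Radon--Nikodym derivative bounded in terms of $C$ and $\delta$ only, since the two harmonic parts differ by at most $2C$ on a region at definite distance from $\R$. As $\SLE_4$ is a simple transient curve satisfying a geometric bound on unforced crossings (see \cite{ksrc}), for $M_0$ large the $\SLE_4$-probability of such a crossing is small, hence the $\gamma$-probability is $\le p_1<1$; the largeness of $m(Q)$ is what supplies enough ``bulk'' scales at which this absolute continuity can be applied and the small probabilities combined.

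The main obstacle is the remaining case: crossings whose excursion approaches $\R$, necessarily near some $x_0\in\R$. Since $V$ does not disconnect $0$ from $\infty$, after relabelling the sides we may take the base interval $J\subset\R$ beneath $V$ to lie in $[0,\infty)$ (the case $0\in J$ reduces to this by cutting off a sub-quadrilateral avoiding $0$, and $J\subset(-\infty,0]$ is symmetric), so $F_n\ge-\lambda+c$ on $J$. The heuristic is that a level line whose boundary data along $J$ is bounded strictly above $-\lambda$ is repelled from $J$, and so cannot ``hug'' $J$ over the long conformal distance that crossing a high-modulus $Q$ would demand. To make this rigorous one combines: (i) monotonicity, Corollary \ref{prop::generalmonotonicty}, which sandwiches $\gamma_{F_n}$ between the level lines of $h+H$ and $h+\tilde H$ for the two-piece functions $H=(\lambda-c)\mathbf 1_{\R_-}+C\,\mathbf 1_{\R_+}$ and $\tilde H=(-C)\mathbf 1_{\R_-}+(-\lambda+c)\mathbf 1_{\R_+}$, both piecewise constant and satisfying (\ref{eqn::inequalities}); and (ii) the non-intersection and Bessel-type arguments of Lemmas \ref{lemma::nothitting}, \ref{lemma::absolutecontinuity} and \ref{lemma::nothitendpoint}: conditioning on the curve entering $V$ and reaching $S_0$, its continuation is a level line in a subdomain whose boundary data along (the image of) $J$ is still $\ge-\lambda+c$, so, exactly as in the proof of Lemma \ref{lemma::nothitting}, the process measuring the separation of the tip from $J$ dominates twice a Bessel process of dimension $>2$; this forces the curve out of a neighbourhood of $J$ before it can reach $S_2$ except with probability $p_2<1$, with $p_2\to 0$ as $m(Q)\to\infty$. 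Every constant depends on $F_n$ only through $c$ and $C$, so the bound is uniform in $n$.

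Taking $M_0$ large, the conditional crossing probability of any avoidable $Q$ with $m(Q)>M_0$ is then at most $p=\max(p_1,p_2)<1$, uniformly over the family, and the serial-decomposition upgrades this to the bound $1/2$ of Condition \ref{cond::quadcross}. I expect the third step — converting the ``repulsion from $J$'' heuristic into a crossing bound valid uniformly over all positions of $x_0$ and all avoidable $Q$ of large modulus, in particular handling quadrilaterals that are long and thin \emph{along} the boundary — to be the part requiring the most care.
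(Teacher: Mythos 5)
Your outline leaves out the one idea the proof actually turns on, and the two quantitative steps you do propose both have problems. The crux is precisely the case you flag at the end as ``requiring the most care'': an avoidable quadrilateral of large modulus that is long and thin along the boundary. The paper deals with this by pushing $Q$ forward by $f_\tau^{(n)}$ (avoidability guarantees both feet $S_1',S_3'$ land on one side of $0$), doubling it across $\R$, and invoking the extremal-length theorem \cite[Theorem 4.7]{ahlci}: since the doubled annulus separates $x=\min(\R\cap S_0'')$ and a point of $\partial B(x,r)$ from $0$ and $\infty$, $\exp(2\pi m(Q)/2)\le 16(x/r+1)$, so $m(Q)\ge M$ forces $S_0''\subset\overline{B(x,\nu x)}$ with $\nu=(e^{\pi M}/16-1)^{-1}$ small. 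That is the step that converts ``large modulus'' into ``to cross, the curve must enter a tiny ball around a boundary point'', and nothing in your proposal substitutes for it. Moreover, your replacement mechanism for boundary-hugging crossings is wrong: with boundary data only $\ge-\lambda+c$ on $J$, the dominating process is an $\SLE_4(\rho)$ with $\rho=-2+c/\lambda\in(-2,0)$, whose separation process compares to a Bessel process of dimension $1+c/(2\lambda)<2$; this is the boundary-hitting regime, so there is no ``repulsion from $J$'' and the curve can and does hit $J$. Lemma \ref{lemma::nothitting} needs $F\ge\lambda$ on the relevant interval, which you do not have here; the only fact available (and the one the paper uses, via monotonicity against $G_1$, scale invariance, and the a.s.\ non-hitting of the fixed point $1$ by the $\SLE_4(-1+C/\lambda;-2+c/\lambda)$ process with force points at $0^\pm$) is that the probability of entering $\overline{B(x,\nu x)}$ tends to $0$ as $\nu\to0$, uniformly by scaling.

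Your ``bulk'' case is also not justified as written. Proposition \ref{prop::gffabscont} gives mutual absolute continuity of the \emph{fields} restricted to a subdomain $U$ away from the boundary; the Radon--Nikodym derivative is almost surely finite but not bounded by a constant depending only on $C$ and $\delta$, and, more seriously, the event that the level line crosses $Q$ is not measurable with respect to $(h+F)|_U$ --- the level line is determined by the field globally, not locally --- so even a bounded derivative on $U$ would not transfer a crossing bound from $\SLE_4$ to $\gamma^{(n)}$ without substantial extra work. Finally, the serial decomposition into $n$ sub-quadrilaterals is unnecessary (the paper gets $1/2$ directly by taking $M$ large, since its bound tends to $0$ as $\nu\to0$) and would itself need an argument that each sub-quadrilateral remains avoidable at the successive crossing times. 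In short: the sandwiching by two-valued boundary data via Corollary \ref{prop::generalmonotonicty} is the right ingredient and matches the paper, but without the extremal-length reduction to a small-ball hitting event, and with the dimension-$>2$ Bessel claim being false, the proof does not go through.
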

\begin{proof}
Recall, we would like to show that our family satisfies a \emph{conformal bound on an unforced crossing}. That is, that there exists a constant $M>0$, such that for any of our processes $\gamma^{(n)}$, any stopping time $\tau$ and any avoidable quadrilateral of $H_\tau = \HH\setminus K_\tau^{(n)}$ whose modulus $m(Q)$ is greater than $M$, 
\[\PP\left(\gamma^{(n)}[\tau,\infty) \, \text{crosses} \, Q \; | \; \gamma^{(n)}[0,\tau]\right) \leq 1/2. \]
Here $(K_t^{(n)}, t\geq 0)$ denotes the sequence of hulls generated by $\gamma^{(n)}$.

For $n\ge 0$, the law of $\gamma^{(n)}$ is that of an $\SLE_4(\underline{\rho}^{L,n}; \underline{\rho}^{R,n})$ process with force points located at $(\underline{x}^{L,n}; \underline{x}^{R,n})$. Denote its driving function by $W^{(n)}$, its sequence of conformal mappings by $g^{(n)}$, and set $f^{(n)}=g^{(n)}-W^{(n)}$. By the results of \cite{wwll1}, we know that $\gamma^{(n)}$ can be coupled with $h$ a zero boundary GFF in $\HH$, as the level line of $h+F_n$, for
\[F_n(x) = 
\begin{cases}
-\lambda\left(1+\sum_{\{i: x_i^{L,n}\geq x\}} \rho_i^{L, n}\right), & x<0 \\
\lambda\left(1+\sum_{\{i: x_i^{R,n}\leq x\}} \; \rho_i^{R,n}\right), & x\geq 0.
\end{cases}\]
Moreover, for any stopping time $\tau$,  we know by the domain Markov property that, conditionally on $\gamma^{(n)}[0,\tau]$, the curve evolves from time $\tau$ onwards as a level line of a GFF with boundary conditions $\eta_\tau^{(n)}$ in the remaining domain. Here $\eta^{(n)}$ is defined corresponding to $F_n$ as in Definition \ref{def::gff_levelline}. The important thing to notice is that, as a result of the condition (\ref{eqn::discreteconditions}), we have 
\[\eta^{(n)}_\tau\circ (f_\tau^{(n)})^{-1} \geq - C, \quad \text{on} \; (-\infty,0) ; \quad \eta^{(n)}_\tau \circ (f_\tau^{(n)})^{-1} \geq -\lambda +c \quad \text{on}\; [0,\infty)\]
for any $\tau$ and $n$.
Therefore, if we set 
\[G_1(x) := \begin{cases}
-C, &  x<0 \\
-\lambda + c, & x\geq 0 
\end{cases}\]
we have that
\[G_1\le \eta_\tau^{(n)}\circ (f_\tau^{(n)})^{-1} ,\quad \text{on}\;\R.\]
Similarly if we set \[G_2(x) := \begin{cases} \lambda - c, & x<0 \\ C, & x\geq 0 \end{cases} \] then 
\[G_2\ge \eta_\tau^{(n)}\circ (f_\tau^{(n)})^{-1},\quad \text{on}\; \R. \]
Now, consider an avoidable topological quadrilateral $Q$ of $H_\tau$. The avoidability assumption means that, when we map it to $\HH$ via $f_\tau^{(n)}$, its image $Q'$ is a topological quadrilateral in $\HH$ as in Definition \ref{defn::topquad} with $S_1'$, $S_3'$ (the arcs touching the boundary) either both lying in $[0,\infty)$, or both in $(-\infty,0]$. 

Suppose we are in the first case. We would like to bound above the probability of $\gamma^{(n)}[\tau, \infty)$ crossing $Q$, where $Q$ has modulus greater than $M$ for some positive $M$. Equivalently, we must bound the probability of $f^{(n)}_\tau( \gamma^{(n)}[\tau,\infty))$ crossing $Q'$, noting by conformal invariance that $Q'$ also has modulus greater than $M$. If $Q'=(V', (S_k')_{1\leq k \leq 4})$, we let $Q''=(V'', S_0'', S_2'')$ be the doubly connected domain where $V''$ is the interior of the closure of $V'\cup V'*$ ($V'*$ the reflection of $V'$ in the real line) and $S_0'', S_2''$ are it's inner and outer boundary. Following the arguments in the proof of \cite[Theorem 1.10]{ksrc}, we let $x=\min(\R \cap S_0'')>0$ and $r=\max\{|z-x|: z \in S_0''\}>0$. We see that $Q''$ is a doubly connected domain separating $x$ and a point on $\partial B(x,r)$ from $0$ and $\infty$ (see Figure \ref{fig::crossing_arg}.) However, \cite[Thoerem 4.7]{ahlci} tells us that among all such domains, the one with the largest modulus (here defined as the extremal length of the curve family connecting $S_0''$ and $S_2''$ in $V''$, which satisfies $m(Q'')=m(Q')/2$) is the domain formed by removing $(-\infty,0]\cup[x,x+r]$ from the complex plane. This modulus is also calculated explicitly in \cite{ahlci} and so we may deduce that 
$$ \exp(2\pi m(Q''))\leq 16\left(\frac{x}{r}+1\right).$$ Since $$m(Q'')=\frac{m(Q)}{2}\geq \frac{M}{2},$$ this means that $r\leq \nu x$ for 
\begin{equation} \label{eqn::nu} \nu= \left( \frac{1}{16} \exp(\pi M) -1 \right)^{-1}. \end{equation} 
Note that $\nu$ can be made as small as we like by choosing $M$ large. 

\begin{figure}[ht!]
\begin{center}
\includegraphics[width=0.5\textwidth]{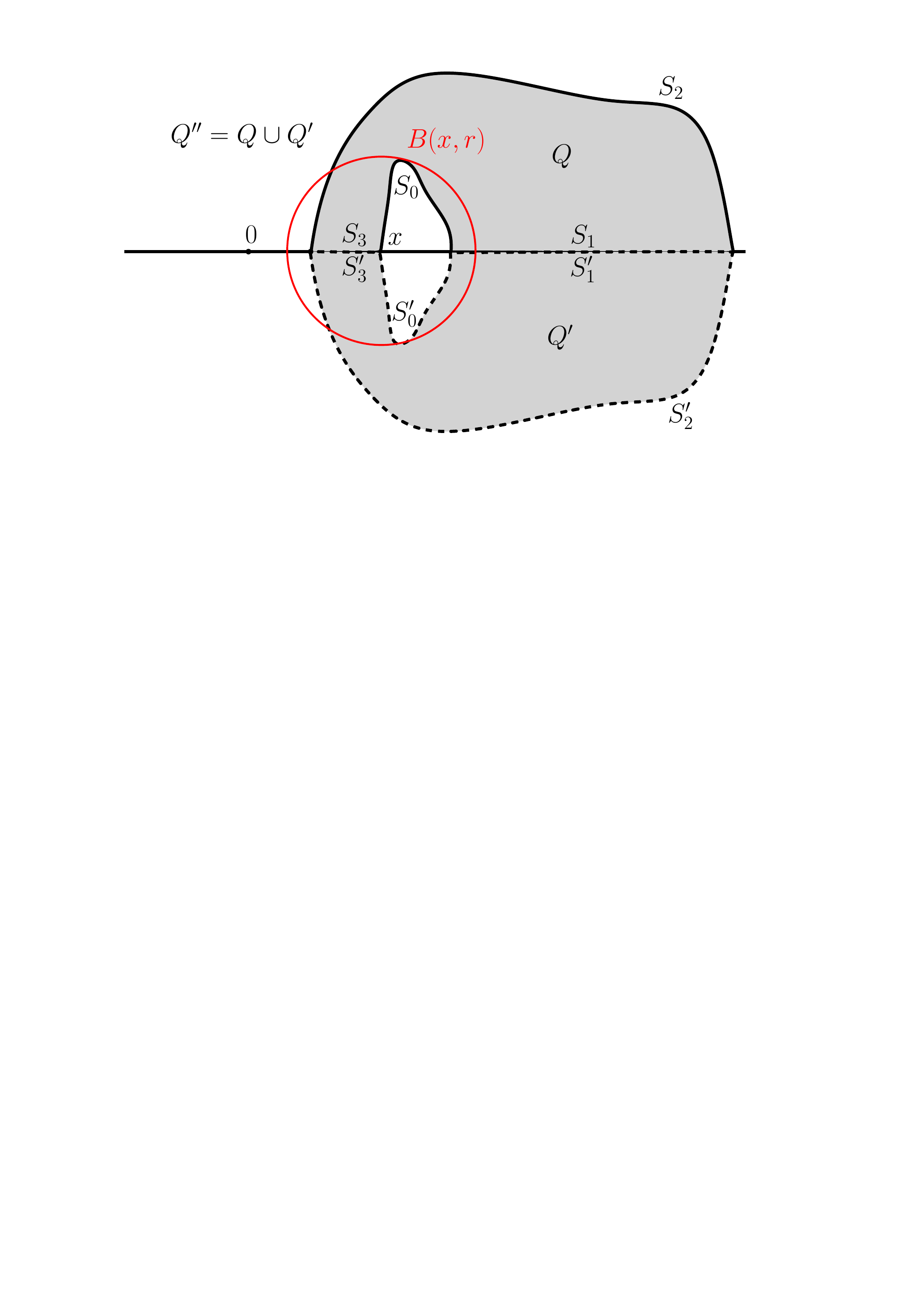}
\end{center}
\caption{\label{fig::crossing_arg} Since $Q''$ separates $x$ and a point on $\partial B(x,r)$ from $\infty$, we obtain a lower bound on $m(Q'')=m(Q)/2.$ Since $m(Q)\geq M$ this gives us an upper bound on $r/x$. Moreover, we know that for a curve to cross $Q$ it must necessarily intersect $\overline{B(x,r)}$. }
\end{figure}

It is also clear that for $f_\tau^{(n)}(\gamma^{(n)}[\tau, \infty))$ to cross $Q'$, it must necessarily intersect $\overline{B(x,r)}$. 
However, the law of $f_\tau^{(n)}(\gamma^{(n)}[\tau, \infty))$ is that of the level line of $\tilde{h}+\eta_\tau^{(n)} \circ (f_\tau^{(n)})^{-1}$, for $\tilde{h}$ a zero boundary  GFF in $\HH$. By the monotonicity result Corollary \ref{prop::generalmonotonicty}, we see that this level line lies to the left of the level line of $\tilde{h}+G_1$ almost surely (see Figure \ref{fig::crossing_prob}.) Thus, the probability of $f_\tau^{(n)}(\gamma^{(n)}[0,\tau])$ intersecting $\overline{B(x,r)}$ is less than the probability of an $\SLE_4(\rho^L;\rho^R)$ process with 
\begin{equation}
\label{eqn::uniform_rhoLrhoR}
\rho^L=-1+\frac{C}{\lambda}; \quad \rho^R = -2+\frac{c}{\lambda}
\end{equation} (left and right force points at the origin), intersecting it.

Therefore, we have
\begin{align*}
\PP&\left(\gamma^{(n)}[\tau, \infty) \; \text{crosses} \; Q \cond \gamma^{(n)}[0,\tau] \right) \\
& \leq \PP \left( f_\tau^{(n)}(\gamma^{(n)}[\tau, \infty)) \; \text{crosses} \; Q' \cond \gamma^{(n)}[0,\tau] \right) \\
& \leq  \PP \left( f_\tau^{(n)}(\gamma^{(n)}[\tau, \infty)) \; \text{intersects} \; \overline{B(x,r)} \cond \gamma^{(n)}[0,\tau] \right) \\
& \leq  \PP \left(\SLE_4(\rho^L;\rho^R) \; \text{intersects} \; \overline{B(x,r)}\right) \tag{\text{$\rho^L, \rho^R$ are defined in Equation (\ref{eqn::uniform_rhoLrhoR})}} \\
& = \PP\left(\SLE_4(\rho^L;\rho^R) \; \text{intersects} \; \overline{B(1, r/x)} \right)\tag{by scaling invariance} \\
& \leq  \PP\left(\SLE_4(\rho^L;\rho^R) \; \text{intersects} \; \overline{B(1, \nu)} \right)\tag{\text{$\nu$ is defined in Equation (\ref{eqn::nu})}}.
\end{align*}

\begin{figure}[ht!]
\begin{center}
\includegraphics[width=0.9\textwidth]{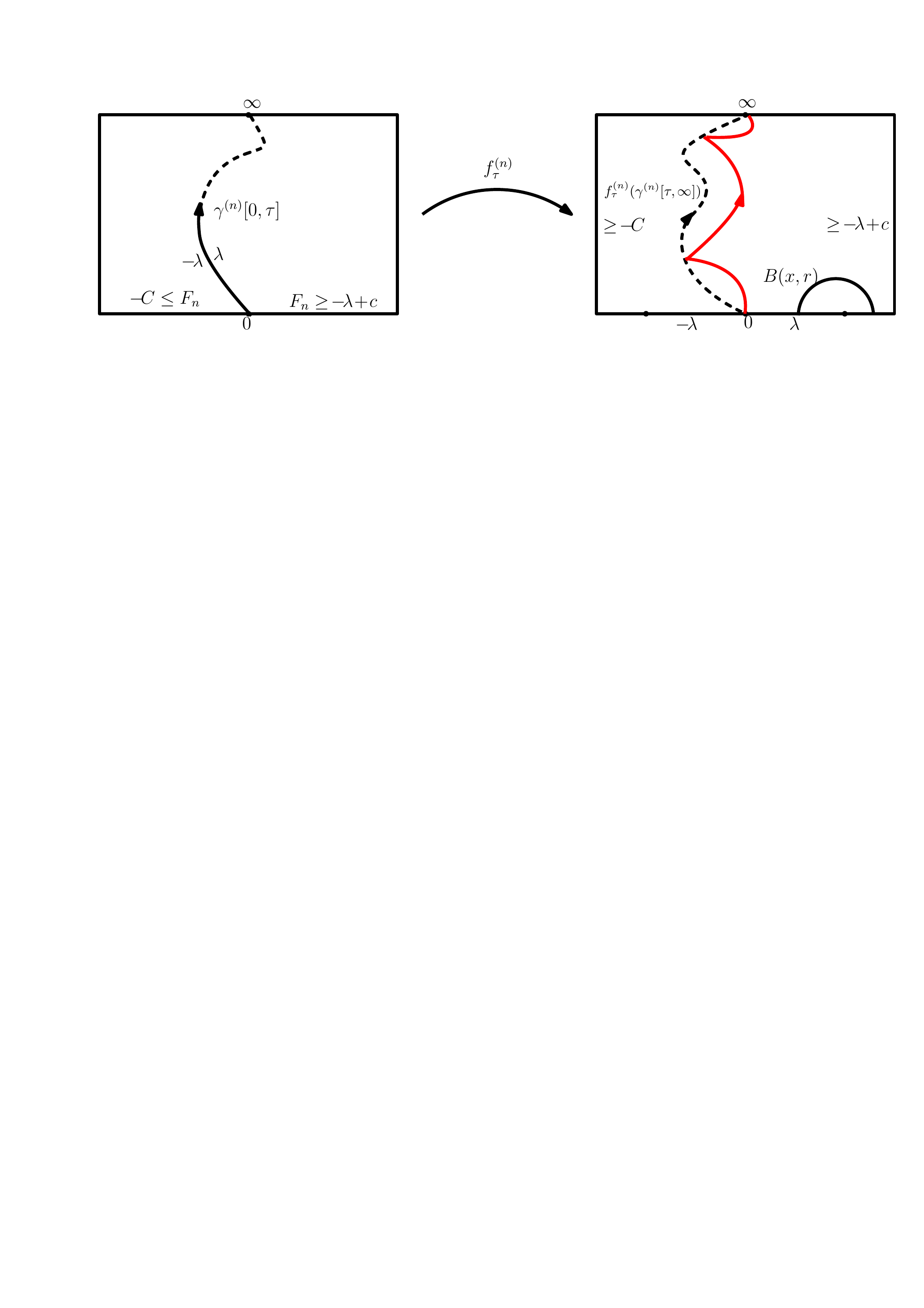}
\end{center}
\caption{\label{fig::crossing_prob} The boundary values of $h+F_n$ given $\gamma^{(n)}[0,\tau]$ are marked in the left panel. Thus $f_\tau^{(n)}(\gamma^{(n)}[\tau,\infty])$ is the level line of a zero boundary GFF $\tilde{h}$ + boundary data as depicted in the right panel. By monotonicity, it must therefore lie to the left of the level line of $\tilde{h}+G_1$ (marked in red.) Consequently, the probability that $f_\tau^{(n)}(\gamma^{(n)}[\tau,\infty])$ intersects $\overline{B(x,r)}$ is less than the probability that the red level line does.  }
\end{figure}

Since we know that the $\SLE_4(\rho^L;\rho^R)$ process with left and right force points at $0$ almost surely does not hit the point $1$ (in fact, there is exact estimate on this event, see for instance \cite[Theorem 1.8]{MillerWuSLEIntersection}), we see that by choosing $M$ large enough, and so $\nu$ small enough, we can make the right hand side less than 1/2. Thus there exists an $M$ such that the left hand side is bounded above uniformly by $1/2$ whenever $m(Q)\geq M$. 

For the second case, when the boundary arcs $S_1', S_3'$ of $Q'$ both lie on the negative real line, we may use symmetrical arguments, replacing $G_1$ by $G_2$. 
\end{proof}

\begin{corollary}\label{cor::tightness}
Suppose that $(\gamma^{(n)})_{n\in \N}$ are a family of $\SLE_4(\underline{\rho}^L; \underline{\rho}^R)$ processes satisfying Condition (\ref{eqn::discreteconditions}) for all $n$. Suppose further than they are all parameterised by half plane capacity and that $(W^{(n)})_{n\in \N}$ are the corresponding family of driving functions. Then
\begin{itemize}
\item $(W^{(n)})_{n\in \N}$ is tight in the metrisable space of continuous functions on $[0,\infty)$ with the topology of uniform convergence on compact subsets of $[0,\infty)$. 
\item $(\gamma^{(n)})_{n\in \N}$ is tight in the metrisable space of continuous functions on $[0,\infty)$ with the topology of uniform convergence on the compact subsets of $[0,\infty)$. 
\end{itemize}
Moreover, if the sequence converges weakly in either of the topologies above, then it also converges weakly in the other and the limits agree in the sense that the law of the limiting random curve is the same as the that of the random curve generated under the law of the limiting driving process.  
\end{corollary}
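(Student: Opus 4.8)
The plan is to obtain the corollary as an immediate consequence of Lemma \ref{lemma::crossingprobs} together with Proposition \ref{propn::convergenceofcurves}, so the work consists almost entirely of checking that the hypotheses of the latter are met uniformly in $n$. First I would record the basic structural facts: each $\gamma^{(n)}$ is an $\SLE_4(\underline{\rho}^L;\underline{\rho}^R)$ process whose weights satisfy Condition (\ref{eqn::discreteconditions}) with one fixed pair $(c,C)$, and by the results of \cite{msig1, wwll1} recalled in Section \ref{subsec::slerho}, each such process is almost surely generated by a continuous curve, which is moreover simple (this also follows from Lemma \ref{lem::simplicity}, since the corresponding $F_n$ satisfy Condition (\ref{eqn::inequalities}) with the same $c$). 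The curves are parameterised by half-plane capacity by assumption, so this is exactly the setting of Proposition \ref{propn::convergenceofcurves}, \emph{provided} the family satisfies the crossing bound Condition \ref{cond::quadcross}.

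Second, Lemma \ref{lemma::crossingprobs} supplies precisely this: there is a modulus threshold $M$, depending only on $c$ and $C$ (and hence common to the whole family), such that for every $n$, every stopping time $\tau$ and every avoidable quadrilateral $Q$ of $H_\tau = \HH\setminus K^{(n)}_\tau$ with $m(Q)\geq M$ one has $\PP(\gamma^{(n)}[\tau,\infty)\text{ crosses }Q\mid\gamma^{(n)}[0,\tau])\leq 1/2$. Feeding this, together with the continuity and simplicity of the curves, into Proposition \ref{propn::convergenceofcurves} yields at once the tightness of $(W^{(n)})_{n\in\N}$ and of $(\gamma^{(n)})_{n\in\N}$ in the stated topologies, the equivalence of weak convergence in the two topologies, and the identification of the limits (the limiting curve is the Loewner curve driven by the limiting driving function). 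The only caveat to address is that Proposition \ref{propn::convergenceofcurves} is stated for curves contained in $\HH$, whereas our $\SLE_4(\underline{\rho})$ curves may touch $\R$; as already noted immediately after the statement of that proposition, following the discussion preceding the proof of Theorem 1.10 in \cite{ksrc}, the conclusion extends without change to curves of exactly this type, so this point causes no difficulty.

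I do not expect any genuine obstacle here: the real content has already been absorbed into Lemma \ref{lemma::crossingprobs} (whose proof in turn relied on the monotonicity Corollary \ref{prop::generalmonotonicty} and on the fact that an $\SLE_4(\rho^L;\rho^R)$ process with force points at the origin almost surely avoids the point $1$), and the remainder is bookkeeping --- verifying the half-plane-capacity parameterisation, the boundary-hitting extension of \cite{ksrc}, and that the threshold $M$ from Lemma \ref{lemma::crossingprobs} is uniform over the family. If anything requires care, it is simply making explicit that all constants produced along the way depend only on $(c,C)$ and not on the individual vectors $(\underline{\rho}^{L};\underline{\rho}^{R})$ or their force points.
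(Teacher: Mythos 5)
Your proposal is correct and follows essentially the same route as the paper: the paper's proof is exactly "combine Lemma \ref{lemma::crossingprobs} (which verifies Condition \ref{cond::quadcross} uniformly, with $M$ depending only on $c,C$) with Proposition \ref{propn::convergenceofcurves} and the remark after it concerning boundary-touching curves, cf.\ Theorem 1.10 of \cite{ksrc}." Your additional bookkeeping (continuity/simplicity of the approximating curves, half-plane capacity parameterisation, uniformity of the constants) is consistent with what the paper takes for granted.
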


\begin{proof}
This is a direct consequence of Proposition \ref{propn::convergenceofcurves} and the remarks in Theorem 1.10 of \cite{ksrc}.
\end{proof}

\section{Existence of the coupling---proof of Theorem \ref{thm::gff_levelline_coupling}} 
\label{sec::existence}
In this section we will show existence of the coupling described by Theorem \ref{thm::gff_levelline_coupling}. Recall we would like to prove that for $F$ on $\R$ which is regulated, so can be approximated uniformly by piecewise constant functions changing value only finitely many times, and which satisfies Condition (\ref{eqn::inequalities}),
there exists a coupling of a Loewner chain $K$ with a zero boundary  GFF $h$, such that $K$ is a level line of $h+F$. Moreover, we will show that $K$ is almost surely generated by a continuous and transient curve $\gamma$. 
 
To do this we will take a sequence of piecewise constant functions $F_n$ (changing value only finitely many times), which uniformly approximate $F$, and consider the level lines, denoted by $\gamma^{(n)}$, of $h+F_n$ for a zero boundary GFF $h$. Observe that we can choose the $F_n$ so that the level lines are a family of $\SLE_4(\underline{\rho}^L;\underline{\rho}^R)$ processes satisfying the conditions of Corollary \ref{cor::tightness}. Thus, the tightness given by the corollary will allow us to extract a subsequential limit.

\begin{proposition}
\label{propn::weaklimit}
Let $F$ satisfy the conditions of Theorem \ref{thm::gff_levelline_coupling}. Suppose that $(F_n)_{n\in \N}$ are piecewise constant functions on $\R$, changing value only finitely many times. Let $h$ be a zero boundary  $\GFF$ and $\gamma^{(n)}$ be the level line of $h+F_n$ for each $n$. Suppose further that they are all parameterized by half plane capacity and that $(W^{(n)})_{n\in \N}$ are the corresponding family of driving functions.

Then, if the $(F_n)$ converge uniformly to $F$ on $\R$, we have that: 
\begin{enumerate}
\item [(1)] There exists a subsequence of the $\gamma^{(n)}$ which converges weakly in the space of continuous functions on $[0,\infty)$ with the topology of uniform convergence on compact subsets of $[0,\infty)$. 
\item [(2)] The limiting law describes a continuous curve from $0$ to $\infty$ in $\HH$ which generates a Loewner chain with a.s. continuous driving function.
\item [(3)] The limiting curve can be coupled with a zero boundary $\GFF$ $h$, as a level line of $h+F$.  
\end{enumerate}

\end{proposition}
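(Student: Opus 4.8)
The plan has three stages: extract a subsequential limit, verify that it is a Loewner curve, and then upgrade it to a level line of $h+F$ by checking the martingale characterization of Lemma \ref{lem::coupling_iff_martingale}. First, discarding finitely many terms, we may assume every $F_n$ satisfies Condition (\ref{eqn::discreteconditions}) with uniform constants: $F$ is regulated hence bounded, and satisfies (\ref{eqn::inequalities}) for some $c>0$, so for $n$ large we have $\|F_n\|_\infty\le C$ for a fixed $C$, together with $F_n\le\lambda-c/2$ on $\R_-$ and $F_n\ge-\lambda+c/2$ on $\R_+$; since $F_n$ is piecewise constant and changes value finitely often, the associated $(\underline\rho^{L,n};\underline\rho^{R,n})$ then satisfies (\ref{eqn::discreteconditions}) with constants $c/2$ and $C$. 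Hence $(\gamma^{(n)})$ is a family of $\SLE_4(\underline\rho^{L,n};\underline\rho^{R,n})$ processes to which Lemma \ref{lemma::crossingprobs} applies, so Corollary \ref{cor::tightness} (equivalently Proposition \ref{propn::convergenceofcurves}) gives tightness of $(\gamma^{(n)})$ and $(W^{(n)})$, the existence of a subsequence along which both converge weakly to limits $\gamma$ and $W$ that agree in the sense of the corollary, and the fact that $\gamma$ a.s. generates a Loewner chain with continuous driving function $W$. Since each $\gamma^{(n)}$ starts at $0$, is parameterized by half-plane capacity on $[0,\infty)$, and half-plane capacity is continuous under Carath\'eodory convergence, the limit $\gamma$ starts at $0$ and is likewise parameterized on $[0,\infty)$. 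This proves (1) and the Loewner-chain part of (2).

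For (3) I would use the ``if'' direction of Lemma \ref{lem::coupling_iff_martingale}: it suffices to produce a realization of the limiting curve on which, for every $z\in\HH$, the process $(\eta_{\tau(t)}(z))_{t\ge0}$ — defined from $\gamma$ and $F$ exactly as in Definition \ref{def::gff_levelline}, with $\tau$ the conformal-radius time change — is a Brownian motion with respect to the filtration of $(W_{\tau(t)})_{t\ge0}$. By the Skorokhod representation theorem, realize the subsequential convergence $\gamma^{(n)}\to\gamma$ a.s.\ uniformly on compacts on a common space. Then: (i) $W^{(n)}\to W$ a.s.\ uniformly on compacts (using the joint-convergence statement in Corollary \ref{cor::tightness}); (ii) for each fixed $z$, the centered maps $f^{(n)}_t(z)\to f_t(z)$ locally uniformly on $[0,\tau(z))$, whence $C^{(n)}_t(z)\to C_t(z)$ by Lemma \ref{lem::cr_evolution} and the time changes $\tau_n(\cdot)\to\tau(\cdot)$ locally uniformly (monotonicity of $C^{(n)}$ in $t$ keeps this under control); (iii) $\eta^{(n)}_t(z)\to\eta_t(z)$ for each $t<\tau(z)$, hence $\eta^{(n)}_{\tau_n(t)}(z)\to\eta_{\tau(t)}(z)$ locally uniformly in $t$. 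For each $n$, since $\gamma^{(n)}$ is coupled with $h$ as a level line of $h+F_n$, the ``only if'' direction of Lemma \ref{lem::coupling_iff_martingale} (equivalently Proposition \ref{localsetsprocess}) tells us $(\eta^{(n)}_{\tau_n(t)}(z))_t$ is a Brownian motion with respect to the filtration of $(W^{(n)}_{\tau_n(t)})_t$.

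It remains to pass the Brownian property to the limit. The limit $(\eta_{\tau(t)}(z))_t$ is a.s.\ continuous, started at $\eta_0(z)=\lim_n\eta^{(n)}_0(z)$; for $0\le s<t$ the increment $\eta_{\tau(t)}(z)-\eta_{\tau(s)}(z)$ is an a.s.\ limit of $N(0,t-s)$ variables, hence $N(0,t-s)$, and for bounded continuous $\phi,\psi$ one has $\E[\phi(\eta_{\tau(t)}(z)-\eta_{\tau(s)}(z))\,\psi((W_{\tau(u)})_{u\le s})]=\lim_n\E[\phi(\cdot^{(n)})\psi(\cdot^{(n)})]=\lim_n\E[\phi]\E[\psi]=\E[\phi]\E[\psi]$, so the increment is independent of the past of $(W_{\tau(u)})_{u\le s}$; this gives the Brownian motion property, and Lemma \ref{lem::coupling_iff_martingale} then produces the desired coupling of $\gamma$ with a zero-boundary $\GFF$ as a level line of $h+F$. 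Finally, for the transience asserted in (2): once $\gamma$ is a level line of $h+F$, pick piecewise-constant $G_1\le F\le G_2$ (changing value finitely often, still satisfying (\ref{eqn::inequalities})) with $G_1\le F_n\le G_2$ for all $n$; by Corollary \ref{prop::generalmonotonicty} each $\gamma^{(n)}$ lies between the (transient, continuous) level lines of $h+G_1$ and $h+G_2$, a property that passes to the uniform-on-compacts limit, so $\gamma$ is trapped between two transient curves and $\gamma(t)\to\infty$.

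The main obstacle is step (iii), the convergence $\eta^{(n)}_t(z)\to\eta_t(z)$. Writing $\eta^{(n)}_t(z)$ as an integral of $F_n$ (and $\pm\lambda$ near the tip) against harmonic measure in $\HH\setminus K^{(n)}_t$ from $z$, one splits $\int F_n\,d\omega^{(n)}-\int F\,d\omega=\int(F_n-F)\,d\omega^{(n)}+(\int F\,d\omega^{(n)}-\int F\,d\omega)$; the first term is at most $\|F_n-F\|_\infty\to0$, and the second requires that harmonic measure convergence (from Carath\'eodory convergence of the domains) may be integrated against the bounded but merely regulated $F$ — which works because $F$ has only countably many discontinuities while harmonic measure on $\R$ from an interior point is absolutely continuous — together with control of the harmonic masses of the two sides of $K^{(n)}_t$ and of the (possibly moving) junctions with $\R_\pm$ near the tip. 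Handling these boundary-regularity issues carefully, uniformly enough to take the limit for a.e.\ $t$, is the technical heart of the proof.
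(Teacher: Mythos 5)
Your overall route is the same as the paper's: tightness and a subsequential Loewner-curve limit via Lemma \ref{lemma::crossingprobs} and Corollary \ref{cor::tightness} (your uniform-constants reduction for large $n$ is the right way to put the approximating level lines under Condition (\ref{eqn::discreteconditions})), and then the martingale characterization of Lemma \ref{lem::coupling_iff_martingale}, with the same two-term decomposition in which the $\|F_n-F\|_\infty$ term is split off on the common domain and what remains is a fixed-$F$, varying-domain convergence. The only real difference in implementation is that you invoke Skorokhod representation and argue with almost sure convergence, whereas the paper stays at the level of weak convergence by proving tightness of the pairs $(\tilde W(\gamma^{(n)},z),\tilde\eta(F,\gamma^{(n)},z))$ and identifying subsequential limits; both implementations hinge on the same stability statement.

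That stability statement is exactly where your proposal has a genuine gap, in two respects. First, your steps (i)--(ii) take for granted that almost sure uniform-on-compacts convergence of the curves yields convergence of the driving functions, of $f^{(n)}_t(z)$, of $C^{(n)}_t(z)$ and of the time changes $\tau_n$; this is false for general sequences of curves (convergence of curves does not imply convergence of driving functions), and it is not contained in Corollary \ref{cor::tightness}, which only matches the two marginal limit laws. The paper obtains it by restricting to the relatively compact sets $E$ constructed in \cite{ksrc}, on which the curves lie with probability at least $1-1/M$ and on which curve convergence does force driving-function convergence along a further subsequence --- a second, essential use of Condition \ref{cond::quadcross} that your argument needs but never makes. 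Second, the fixed-$F$, varying-domain convergence $\eta_t(F,\gamma^{(n)},z)\to\eta_t(F,\gamma,z)$, together with the locally uniform-in-$t$ control required to pass to the time-changed processes $\eta_{\tau_n(t)}$, is precisely the content of the paper's Lemma \ref{lem::tildeeta_cvg_gamma} (its Equations (\ref{eqn::tildeeta_cvg_nicegamma1})--(\ref{eqn::tildeeta_cvg_nicegamma3})); you correctly identify this as the technical heart but leave it unproved, so as written the proposal establishes (1)--(2) and the skeleton of (3), not (3) itself. (Your sandwiching argument for transience is also not airtight --- ``lies to the left of'' does not obviously survive a weak limit, and being trapped between two transient curves does not by itself force $\gamma(t)\to\infty$ --- though the paper does not treat transience inside this proposition either.)
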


\begin{remark} We will later see that this limiting law does not depend on the choice of approximation, as any continuous curve which can be coupled with a zero boundary $\GFF$ as a level line of $h+F$ must have a unique law: see Remark \ref{rmk::lawunique}. In particular, this tells us that we actually have convergence of the whole sequence in distribution.  \end{remark}

\begin{proof}[Proof of Theorem \ref{thm::gff_levelline_coupling}]
Theorem \ref{thm::gff_levelline_coupling} is a direct consequence of Proposition \ref{propn::weaklimit}.
\end{proof}
\begin{proof}[Proof of Proposition \ref{propn::weaklimit}, Items (1), (2)]
Note that the weak convergence directly follows from Corollary \ref{cor::tightness}, as does the fact that the limiting law corresponds to a continuous curve generating a Loewner chain with almost surely continuous driving function. 
\end{proof}

\begin{definition}
\label{defn::etatilde}
Suppose that $F$ is $L^1$ with respect to harmonic measure on $\R$ and that $\gamma$ is a continuous curve with continuous Loewner driving function. We set $\eta_t^0$ in the same way as in Definition \ref{def::gff_levelline}. 
Then for any $z\in \HH$ we can define, for $t$ less than the first time that $\gamma$ swallows $z$,
\[\eta_t(F, \gamma, z)=\eta_t^0(f_t(z))\]
as in Definition (\ref{def::gff_levelline}), emphasising the dependence on $F$ and $\gamma$. Let 
\[C_t(\gamma, z)=\log\CR(z, \HH)-\log \CR(z,\HH\setminus K_t),\] 
\[\tilde{\eta}_t(F, \gamma ,z)=\eta_{\tau(t)}(F, \gamma, z), \quad \text{where }\tau(t):= \inf\{s\geq 0: C_s(\gamma, z)=t\}.\]
Finally, define $(\tilde{W}_t(\gamma,z))_{t\geq 0}$ for $\gamma$ to be the driving function of $\gamma$ reparameterised by $C_t(\gamma, z)$.
\end{definition}

To prove Proposition \ref{propn::weaklimit}, Item (3), i.e. to see that the limiting curve can be coupled as a level line in the way we want, we will use Lemma \ref{lem::coupling_iff_martingale}. This tells us that if we define $\tilde{\eta}_t(F,\gamma,z)$ as above for our limiting curve $\gamma$, we need only show that for each $z\in \HH$, the process ($\tilde{\eta}_t(F,\gamma,z), t\ge 0)$ is a Brownian motion with respect to the filtration generated by $(\tilde{W}_t(\gamma,z), t\geq 0)$.

\begin{lemma}\label{lem::tildeeta_cvg_gamma}
Let $(\gamma^{(n_k)})$ be a subsequence of the random curves in Proposition \ref{propn::weaklimit}, parameterised by half plane capacity, which converge weakly to some $\gamma$ in the space of continuous functions on $[0,\infty)$ with the topology of uniform convergence on compacts. Then for every $z\in \HH$, \[\left( \tilde{W}(\gamma^{(n_k)},z), \tilde{\eta}(F, \gamma^{(n_k)},z)\right) \xrightarrow{d}\left(\tilde{W}(\gamma,z), \tilde{\eta}(F, \gamma,z)\right)\]
in $C([0,\infty);\R)\times C([0,\infty);\R)$ with respect to the product topology of uniform convergence on compacts.
\end{lemma}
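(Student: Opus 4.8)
The plan is to deduce the joint convergence of $(\tilde W(\gamma^{(n_k)},z), \tilde\eta(F,\gamma^{(n_k)},z))$ from the weak convergence $\gamma^{(n_k)} \to \gamma$ by exhibiting $(\tilde W(\gamma,z),\tilde\eta(F,\gamma,z))$ as a (measurable, a.s.\ continuous) functional of the curve $\gamma$, and then invoking the continuous mapping theorem. Since the $F_n$ converge uniformly to $F$, and in the definition of $\eta_t^0$ the boundary data is built from $F$ (resp.\ $F_n$) composed with $f_t^{-1}$ — together with the constants $\pm\lambda$ on the arcs near the origin — the only real issue is to show that, for a.e.\ realization of the limiting curve $\gamma$, the maps
\[
\gamma \mapsto \big(t \mapsto C_t(\gamma,z)\big),\qquad
\gamma \mapsto \big(t \mapsto W_t\big),\qquad
(\gamma,F) \mapsto \big(t\mapsto \eta_t^0(f_t(z))\big)
\]
are continuous at $\gamma$ with respect to the topology of uniform convergence on compacts, and that the time-change $\tau(t)=\inf\{s: C_s(\gamma,z)=t\}$ is itself continuous in $\gamma$ (for $t$ below the swallowing time of $z$).

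First I would record the standard Loewner-theoretic facts: if $\gamma^{(n_k)}\to\gamma$ uniformly on compacts and all curves are parameterized by half-plane capacity, then the associated hulls $K_t^{(n_k)}$ converge to $K_t$ in the Carathéodory sense (locally uniformly in $t$) up to any time before $z$ is swallowed, the centered conformal maps $f_t^{(n_k)}$ converge to $f_t$ locally uniformly on $\HH\setminus K_t$ (by Carathéodory kernel convergence), and the driving functions $W^{(n_k)}$ converge to $W$ uniformly on compacts — this last point is exactly the content built into Proposition \ref{propn::convergenceofcurves} / Corollary \ref{cor::tightness}, so by the remark there the driving functions converge jointly with the curves. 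From $f_t^{(n_k)}(z)\to f_t(z)$ and Lemma \ref{lem::cr_evolution} (which expresses $C_t(z)$ as $\int_0^t 4\,\Im(f_s(z))^2/|f_s(z)|^4\,ds$), I get $C_t(\gamma^{(n_k)},z)\to C_t(\gamma,z)$ uniformly on compact time intervals strictly before the swallowing time; since $C_\cdot(\gamma,z)$ is strictly increasing and continuous, its inverse $\tau(\cdot)$ depends continuously on it, and hence $\tilde W_t(\gamma^{(n_k)},z)=W^{(n_k)}_{\tau_{n_k}(t)}\to \tilde W_t(\gamma,z)$ locally uniformly. For $\tilde\eta$, I use that $\eta_t^0$ is the bounded harmonic extension of explicit boundary data: the data on the two arcs adjacent to $0$ is constant ($\pm\lambda$), the arc endpoints $V_t^{R}(0^+)-W_t$, $V_t^{L}(0^-)-W_t$ converge (being continuous functionals of the converging data), and on the remaining parts the data is $F_n\circ (f_t^{(n_k)})^{-1} \to F\circ f_t^{-1}$ pointwise a.e.\ and boundedly, because $F_n\to F$ uniformly and $(f_t^{(n_k)})^{-1}\to f_t^{-1}$; by dominated convergence for harmonic measure, $\eta_t^{0,(n_k)}(f_t^{(n_k)}(z))\to \eta_t^0(f_t(z))$, and composing with the (converging) time change gives $\tilde\eta_t(F,\gamma^{(n_k)},z)\to\tilde\eta_t(F,\gamma,z)$.

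To conclude, I would run all of this under a Skorokhod coupling: by Skorokhod's representation theorem we may assume the $\gamma^{(n_k)}$ and $\gamma$ are realized on a common probability space with $\gamma^{(n_k)}\to\gamma$ almost surely in the uniform-on-compacts topology; the above arguments then give a.s.\ convergence of $(\tilde W(\gamma^{(n_k)},z),\tilde\eta(F,\gamma^{(n_k)},z))$ to $(\tilde W(\gamma,z),\tilde\eta(F,\gamma,z))$ in $C([0,\infty);\R)^2$ with the product topology, which yields the claimed convergence in distribution. The main obstacle I anticipate is the one subtle point about the boundary data: controlling the convergence of the harmonic functions $\eta_t^{0}$ uniformly in $t$ near times when the curve comes close to the real line, and in particular making sure that the discontinuity points of $F$ do not cause the pointwise-a.e.\ convergence $F_n\circ(f_t^{(n_k)})^{-1}\to F\circ f_t^{-1}$ to fail on a set of positive harmonic measure; here one uses that $F$ is regulated (hence has at most countably many discontinuities) together with the fact that, for fixed $t$ below the swallowing time, the pushforward of harmonic measure on $\partial\HH$ under $(f_t^{-1})$ is mutually absolutely continuous with Lebesgue measure away from the finitely many marked points, so a countable "bad set" is negligible. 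Everything else is routine Carathéodory-convergence bookkeeping together with the continuity of the capacity time change.
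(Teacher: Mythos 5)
Your overall strategy (realize $(\tilde{W},\tilde{\eta})$ as a functional of the curve, prove continuity of that functional, then conclude by Skorokhod representation and the continuous mapping theorem) is in spirit close to the paper's, but it has a genuine gap at exactly the point the paper flags as the crux. You assert, as a ``standard Loewner-theoretic fact'', that uniform-on-compacts convergence of capacity-parameterized curves implies Carath\'eodory convergence of the hulls, locally uniform convergence of the centered maps, and uniform convergence of the driving functions. This implication is false in general: the map from a capacity-parameterized curve to its driving function is not continuous on the space of curves (a uniform limit of capacity-parameterized simple curves which nearly pinch off a bubble, or nearly trace along the boundary, need not itself be capacity-parameterized, and the approximating driving functions need not converge to the driving function of the limiting chain). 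This is precisely why the paper remarks that the proof ``would be trivial'' if $(\tilde{W}(\cdot,z),\tilde{\eta}(F,\cdot,z))$ were continuous on the space of curves, and why the crossing condition of \cite{ksrc} is invoked. Your attempted justification via Proposition \ref{propn::convergenceofcurves} / Corollary \ref{cor::tightness} does not close the hole: those results give convergence in distribution of the marginal laws of the driving processes (and consistency of the limits in law), not the pathwise statement you need after the Skorokhod coupling, namely that almost surely on the coupled space the driving functions of the realized curves converge to the driving function of the realized limit curve. Nor can you appeal to regularity of $\gamma$ to get continuity of the functional at $\gamma$: at this stage $\gamma$ is only known to be a weak limit generating a Loewner chain with continuous driving function, and its finer properties (simplicity, boundary behaviour) are consequences of the coupling being constructed here.

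The paper's proof circumvents this by working on the set $E$ of curves furnished by \cite{ksrc}: $E$ is relatively compact, and convergence of curves in $E$ forces convergence of their driving functions along a further subsequence with agreeing limits, while $\inf_k\PP(\gamma^{(n_k)}\in E)\ge 1-1/M$. All of your deterministic computations (convergence of $C_t$ via Lemma \ref{lem::cr_evolution}, of the time change $\tau$, and of $\eta_t$ via its description as a bounded harmonic extension) are carried out there for sequences in $E$ whose driving functions converge — note that the convergence of the conformal maps $g^n_t\to g_t$ is deduced from the convergence of the driving functions (via \cite[Lemmas A.3 and A.4]{ksrc}), not from convergence of the curves. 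The conclusion is then reached not by the continuous mapping theorem but by tightness of the joint laws of $(\gamma^{(n_k)},\tilde{W},\tilde{\eta})$ and identification of any subsequential limit through the closed graph over $\overline{E}$, which carries probability at least $1-1/M$ for every $M$. If you replace your appeal to ``standard facts'' by this restriction to $E$ and the closed-graph identification, the rest of your computations match the paper's (\ref{eqn::tildeeta_cvg_nicegamma1})--(\ref{eqn::tildeeta_cvg_nicegamma3}). A minor further point: this lemma concerns $\tilde{\eta}(F,\gamma^{(n_k)},z)$ with the \emph{fixed} function $F$; the replacement of $F$ by $F_{n_k}$ using uniform convergence is the content of the subsequent Lemma \ref{lemma::etaconvergence}, so that portion of your argument, while harmless, is not needed here.
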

 We postpone the proof of Lemma \ref{lem::tildeeta_cvg_gamma} and first tell the readers how we obtain Proposition \ref{propn::weaklimit} from Lemma \ref{lem::tildeeta_cvg_gamma}. 
\begin{lemma}
\label{lemma::etaconvergence}
Let $(\gamma^{(n_k)})$ be a subsequence of the random curves in Proposition \ref{propn::weaklimit}, parameterised by half plane capacity, which converge weakly to some $\gamma$ in the space of continuous functions on $[0,\infty)$ with the topology of uniform convergence on compacts. Then for every $z\in \HH$, \[\left( \tilde{W}(\gamma^{(n_k)},z), \tilde{\eta}(F_{n_k}, \gamma^{(n_k)},z)\right) \xrightarrow{d}\left(\tilde{W}(\gamma,z), \tilde{\eta}(F, \gamma,z)\right)\]
in $C([0,\infty);\R)\times C([0,\infty);\R)$ with respect to the product topology of uniform convergence on compacts.
\end{lemma}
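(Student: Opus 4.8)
The plan is to deduce this lemma from Lemma~\ref{lem::tildeeta_cvg_gamma} (which is the same statement but with $F$ in place of $F_{n_k}$ on the left-hand side) together with a deterministic, uniform estimate comparing $\tilde\eta(F_{n_k},\gamma^{(n_k)},z)$ with $\tilde\eta(F,\gamma^{(n_k)},z)$. The key observation is that the driving-function component $\tilde W(\gamma^{(n_k)},z)$ does not depend on the boundary data at all, and the time change $t\mapsto\tau(t)$ in Definition~\ref{defn::etatilde} depends only on $\gamma^{(n_k)}$ and $z$; so the boundary data enters only through the harmonic function $\eta^0$, and there the dependence is $1$-Lipschitz in the supremum norm, uniformly in the curve and the time.

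First I would fix a continuous curve $\gamma$ with continuous Loewner driving function, a point $z\in\HH$, and a time $t$ below the swallowing time of $z$, and compare the two bounded harmonic functions $\eta^0_t$ built from $F$ and from $F_{n_k}$ as in Definition~\ref{def::gff_levelline}. Their difference is the bounded harmonic function on $\HH$ whose boundary values equal $(F_{n_k}-F)\circ f_t^{-1}$ on the two outer arcs $\{x\ge V_t^R(0^+)-W_t\}$ and $\{x<V_t^L(0^-)-W_t\}$, and vanish on the complementary arc where both functions take the values $\pm\lambda$. These boundary values are everywhere at most $\|F_{n_k}-F\|_\infty$ in absolute value, so, since a bounded harmonic function on $\HH$ is a Poisson-average of its boundary data, $|\eta_t(F_{n_k},\gamma,z)-\eta_t(F,\gamma,z)|\le\|F_{n_k}-F\|_\infty$. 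Because the reparametrisation in Definition~\ref{defn::etatilde} is the same for $F$ and $F_{n_k}$, this upgrades to $\sup_{t}\,|\tilde\eta_t(F_{n_k},\gamma,z)-\tilde\eta_t(F,\gamma,z)|\le\|F_{n_k}-F\|_\infty$, valid for every realisation of the curve.

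Finally I would assemble the pieces. Apply the last bound with $\gamma=\gamma^{(n_k)}$; since $F_{n_k}\to F$ uniformly on $\R$ we have $\|F_{n_k}-F\|_\infty\to 0$, so the pair $(\tilde W(\gamma^{(n_k)},z),\tilde\eta(F_{n_k},\gamma^{(n_k)},z))$ has the same first coordinate as $(\tilde W(\gamma^{(n_k)},z),\tilde\eta(F,\gamma^{(n_k)},z))$ and a second coordinate differing from it by at most $\|F_{n_k}-F\|_\infty$ in the supremum norm, hence by a quantity tending to $0$ in the metric of locally uniform convergence on $C([0,\infty);\R)\times C([0,\infty);\R)$. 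By Lemma~\ref{lem::tildeeta_cvg_gamma} the second pair converges weakly to $(\tilde W(\gamma,z),\tilde\eta(F,\gamma,z))$, and therefore by the converging-together (Slutsky) lemma the first pair converges weakly to the same limit, which is the claim.

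The genuine difficulty lies not here but in Lemma~\ref{lem::tildeeta_cvg_gamma}: there one must control, as a measurable function of the curve, the joint convergence of the time changes $C_t(\gamma^{(n_k)},z)$ and of the centred maps $f^{(n_k)}_t(z)$, including the delicate behaviour near the time $z$ is swallowed. Once that is available, the present statement is essentially immediate, precisely because --- as the estimate above shows --- the passage from boundary data to the path $\tilde\eta$ is uniformly continuous (indeed $1$-Lipschitz) for the sup norm, uniformly over curves.
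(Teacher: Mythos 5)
Your proposal is correct and follows essentially the same route as the paper: it invokes Lemma \ref{lem::tildeeta_cvg_gamma} for the pair with boundary data $F$, then uses the fact that $\tilde\eta_t(F_{n_k},\gamma',z)$ and $\tilde\eta_t(F,\gamma',z)$ are harmonic extensions of boundary data differing by at most $\sup_{x\in\R}|F_{n_k}(x)-F(x)|$ (with the same curve-dependent time change), so their difference tends to $0$ uniformly, and concludes by a converging-together argument. The only difference is presentational: you spell out the Poisson-average bound and cite Slutsky explicitly, which the paper leaves implicit.
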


\begin{proof}
By Lemma \ref{lem::tildeeta_cvg_gamma}, we have that \begin{equation}
\label{eqn::weakconvergence1}
\left(\tilde{W}(\gamma^{(n_k)},z),\tilde{\eta}(F, \gamma^{(n_k)},z)\right) \xrightarrow{d} \left(\tilde{W}(\gamma,z), \tilde{\eta}(F, \gamma,z)\right)
\end{equation} with respect to the product topology of uniform convergence on compacts. It is also clear that, for all $t,z$ and any curve $\gamma'$
\[\left|\tilde{\eta}_t(F_{n_k}, \gamma',z)-\tilde{\eta}_t(F, \gamma',z)\right| \leq \sup_{x\in \R}|F(x)-F_{n_k}(x)|.\] Indeed, $\tilde{\eta}_t(F_{n_k}, \gamma',\cdot)$ and $\tilde{\eta}_t(F, \gamma',\cdot)$ are by definition harmonic extensions of functions whose boundary values differ by at most the right hand side. Since $\sup_{x\in \R}|F(x)-F_{n_k}(x)| \to 0$ by assumption, we may conclude that, for any $T>0$, almost surely as $k\to\infty$, 
\begin{equation} \label{eqn::weakconvergence2}
\sup_{t\in [0, T]}\left|\tilde{\eta}_t(F_{n_k}, \gamma^{(n_k)},z)- \tilde{\eta}_t(F, \gamma^{(n_k)},z)\right|\to 0 .
\end{equation} 
Combining Equations (\ref{eqn::weakconvergence1}) and (\ref{eqn::weakconvergence2}), we obtain the conclusion.
\end{proof}

\begin{proof}
[Proof of Proposition \ref{propn::weaklimit}, Item (3)]
Fix $z\in \HH$. Since $\gamma^{(n_k)}$ is coupled as a level line of $h+F_{n_k}$ we know by Lemma \ref{lem::coupling_iff_martingale} that $\left(\tilde{\eta}_t(F_{n_k}, \gamma^{(n_k)},z), t\geq 0\right)$ is a Brownian motion for each $k$, with respect to the filtration of $\left(\tilde{W}_t(\gamma^{(n_k)},z), t\geq 0\right)$. Therefore, by the weak convergence in Lemma \ref{lemma::etaconvergence}, we have that if $\gamma$ is the limiting law of the $\gamma^{(n_k)}$'s, the process $\tilde{\eta}_t(F, \gamma,z)$ must also have the law of Brownian motion, with respect to the filtration of $\left(\tilde{W}_t(\gamma,z), t\geq 0\right)$.  Applying Lemma \ref{lem::coupling_iff_martingale} again proves the proposition.
\end{proof}

\begin{proof}
[Proof of Lemma \ref{lem::tildeeta_cvg_gamma}]
Fix $z\in \HH$. We will show that the laws of $(\tilde{W}(\gamma^{(n_k)},z),\tilde{\eta}(F, \gamma^{(n_k)},z))$ converge weakly in $k$ to the law of $(\tilde{W}(\gamma,z),\tilde{\eta}(F, \gamma,z))$. To do this, we begin by showing that this family of laws is tight in $C([0,\infty);\R)\times C([0,\infty);\R)$ with respect to the product topology of uniform convergence on compacts. This allows us to extract a further subsequence along which the $(\tilde{W}(\gamma^{(n_k)},z),\tilde{\eta}(F, \gamma^{(n_k)},z))$'s converge. We then argue that the limit of this subsequence must be equal to that of $(\tilde{W}(\gamma,z),\tilde{\eta}(F, \gamma,z))$, so in fact our whole original subsequence converged, and the limit is $(\tilde{W}(\gamma,z),\tilde{\eta}(F, \gamma,z))$.
Note that the proof of this lemma would be trivial if $(\tilde{W}(\cdot,z),\tilde{\eta}(F, \cdot, z))$ was a continuous function on the set of curves, however, this is not quite the case. It is essentially a continuous function when restricted to a set in which the $\gamma^{(n_k)}$'s lie with high probability. 

By the proof of \cite[Theorem 1.5]{ksrc}, we know that for every $M>0$ we can find a subset $E$ of the space of continuous curves in $\HH$ such that
\begin{equation} \label{tight} \inf_{k} \PP(\gamma^{(n_k)}\in E) \geq 1-\frac{1}{M} \end{equation}
when the $(\gamma^{(n_k)})$ are parameterised by half plane capacity, and 
\begin{itemize}
\item $E$ is relatively compact with respect to the topology of uniform convergence on compacts,
\item curves in $E$ correspond to Loewner chains with continuous driving functions parameterised by half plane capacity, and
\item if a sequence of curves in $E$ converges with respect to uniform convergence on compacts, then their driving functions also converge uniformly on compacts along a further subsequence, and the limits agree.
\end{itemize}
For the construction of such an $E$, see Section 3.5 of \cite{ksrc}, in particular the definition (60) and the discussion in the closing paragraphs. See also the opening paragraph of Section 3.6.

We argue that the set $\{(\tilde{W}(\gamma',z), \tilde{\eta}(F,\gamma',z)): \gamma' \in E\}$ is a relatively compact subset of $C([0,\infty);\R)\times C([0,\infty);\R)$ with respect to the product topology of uniform convergence on compacts. Thus by (\ref{tight}) the laws of the \[\left(\tilde{W}(\gamma^{(n_k)},z),\tilde{\eta}(F, \gamma^{(n_k)},z)\right)\] are tight in this topology. 
It is sufficient to verify the following claim: if $\gamma_n'\to \gamma'$ is any convergent sequence of curves in $E$, whose driving functions also converge uniformly on compacts, then for any $T>0$, as $n\to\infty$,
\begin{equation}\label{eqn::tildeeta_cvg_nicegamma}
\sup_{t\in [0,T]}\left| \tilde{\eta}_t(F, \gamma_n',z) - \tilde{\eta}_t(F, \gamma',z)\right| \to 0
\end{equation}
 and
\begin{equation}\label{eqn::tildew_cvg_nicegamma}
\sup_{t\in [0,T]} \left | \tilde{W}_t(\gamma_n',z)-\tilde{W}_t(\gamma',z)\right| \to 0.
\end{equation}
Relative compactness then follows because the choice of $E$ means that any sequence of curves in $E$ has a convergent subsequence along which the driving functions also converge. 

We will prove the above claim now. We let $K_t$ (resp. $K_t^n$) be the hull generated by $\gamma'$ (resp. $\gamma_n'$) in the capacity parameterisation and $W_t, g_t$ (resp. $W_t^n, g_t^n$) be the corresponding driving functions, and functions $\HH\setminus K_t$ (resp. $\HH\setminus K_t^n$) to $\HH$, normalised at $\infty$. We define $f_t=g_t-W_t$ and $f_t^n=g_t^n-W_t^n$ as usual, and consider these to be extended to the boundary, also writing $f_t(0^+)$ for $V_t^R(0^+)-W_t$. Write 
\[C_t=C_t(\gamma', z),\quad C_t^n=C_t(\gamma_n', z);\]
\[\tau(t):= \inf\{s\geq 0: C_s=t\}, \quad \tau^n(t):= \inf\{s\geq 0: C_s^n=t\}. \]

First, we will show that for any $T>0$ before the first time that $\gamma'$ swallows $z$,  as $n\to\infty$,
\begin{equation}
\label{eqn::tildeeta_cvg_nicegamma1}
\sup_{t\in[0,T]}|C_t-C_t^n| \to 0.
\end{equation}
We have the following observations.
\begin{itemize}
\item By Lemma \ref{lem::cr_evolution}, and since $C_0=C_0^n=0$, we have
\[C_t=\int_0^t \frac{-4\Im(f_s(z))^2}{|f_s(z)|^4} ds;\quad C^n_t=\int_0^t \frac{-4\Im(f^n_s(z))^2}{|f^n_s(z)|^4} ds.\]
\item $W_t^n\to W_t$ uniformly on $[0, T]$.
\item $g_t^n \to g_t$ uniformly on $\{(t,z)\in [0,T]\times \overline{\HH}: d(z,K_t)>\delta\}$ for any $\delta>0$. (See for instance \cite[Lemmas A.3 and A.4]{ksrc}) 
\end{itemize}
Combining these three facts, we obtain Equation (\ref{eqn::tildeeta_cvg_nicegamma1}).
\medbreak
Second, we show that, for any $T>0$ before $\gamma'$ swallows $z$, as $n\to\infty$, 
\begin{equation}\label{eqn::tildeeta_cvg_nicegamma2}
\sup_{t\in [0,T]}\left|\eta_{\tau^n(t)}(F, \gamma',z)-\eta_{\tau(t)}(F, \gamma',z) \right|\to 0. 
\end{equation}
By Equation (\ref{eqn::tildeeta_cvg_nicegamma1}), we have that $[0,\tau^n(T)\vee \tau(T)]\subset [0,\tau(S)]$ for $n$ large enough, where $S>T$, and $\tau(S)$ is a time before $z$ is swallowed by $\gamma'$. By (\ref{eqn::tildeeta_cvg_nicegamma1}) again, we therefore have that, as $n\to\infty$ \[c_n:=\sup_{t\in [0,T]}|C_{\tau^n(t)}-t|\le \sup_{t\in [0,\tau^n(T)\vee \tau(T)]}|C_t - C_t^n |\to 0.\]
Since
\[\sup_{t\in [0,T]}\left|\eta_{\tau^n(t)}(F, \gamma',z)-\eta_{\tau(t)}(F, \gamma',z) \right|
\le \sup_{s,t\in [0,S], |s-t|\leq c_n}\left| \tilde{\eta}_t(F, \gamma',z) - \tilde{\eta}_s(F, \gamma',z)\right|,\]
and $\tilde{\eta}_t(F, \gamma',z)$ is uniformly continuous on $[0,S]$, we see that it must converge to $0$.

\medbreak
Third, we show that, for any $T>0$ before $\gamma'$ swallows $z$, as $n\to\infty$, 
\begin{equation}\label{eqn::tildeeta_cvg_nicegamma3}
\sup_{t\in [0,T]} \left|\eta_{\tau^n(t)}(F, \gamma_n',z)-\eta_{\tau^n(t)}(F, \gamma',z) \right|\to 0. 
\end{equation}
We need only show that, on any time interval $[0,S]$ such that $S$ is strictly less than the time $\gamma'$ swallows $z$,  the quantity $\left|\eta_t(F, \gamma_n',z)-\eta_t(F, \gamma',z)\right|$ converges uniformly to $0$. We have the following observations.
\begin{itemize}
\item By Definition \ref{defn::etatilde}, we know that $\eta_t(F, \gamma', \cdot)$ (resp. $\eta_t(F, \gamma_n', \cdot)$) is the bounded harmonic function with boundary values equal $F$ on $\R\setminus K_t$ (resp. on $\R\setminus K^n_t$), $-\lambda$ on the left side of $K_t$ (resp. $K^n_t$), and $\lambda$ on the right side of $K_t$ (resp. $K^n_t$).
\item $W_t^n\to W_t$ uniformly on $[0, S]$.
\item $g_t^n \to g_t$ uniformly on $\{(t,z)\in [0,S]\times \overline{\HH}: d(z,K_t)>\delta\}$ for any $\delta>0$. Same reason as above.
\end{itemize}
Combining these three facts, we have that the quantity $\left|\eta_t(F, \gamma_n',z)-\eta_t(F, \gamma',z)\right|$ converges uniformly to $0$ on $t\in [0,S]$, implying Equation (\ref{eqn::tildeeta_cvg_nicegamma3}).
\medbreak
Combining Equations (\ref{eqn::tildeeta_cvg_nicegamma1}), (\ref{eqn::tildeeta_cvg_nicegamma2}) and (\ref{eqn::tildeeta_cvg_nicegamma3}), we obtain Equation (\ref{eqn::tildeeta_cvg_nicegamma}) by noting that 
\[\sup_{t\in [0,T]}\left| \eta_{\tau^n(t)}(F, \gamma_n',z)-\eta_{\tau(t)}(F, \gamma',z) \right| \leq \sup_{t\in [0,T]} \left|\eta_{\tau^n(t)}(F, \gamma_n',z)-\eta_{\tau^n(t)}(F, \gamma',z) \right| + \sup_{t\in [0,T]}\left|\eta_{\tau^n(t)}(F, \gamma',z)-\eta_{\tau(t)}(F, \gamma',z) \right|.\]
\medbreak
We obtain Equation (\ref{eqn::tildew_cvg_nicegamma}) by the same method as above, which is much simpler in this case, and so we omit the details.
\medbreak 
Finally, we show that if $(\gamma^{(n_k)})_{k\in \N}$ converges weakly, and there exists a further subsequence along which $(\tilde{W}(\gamma^{(n_k)},z), \tilde{\eta}(F, \gamma^{(n_k)},z))$ converges, then the limit must be $(\tilde{W}(\gamma,z),\tilde{\eta}(F, \gamma,z))$.
To do this, for any $M\in \N$ take $E$ relatively compact such that (\ref{tight}) holds, and note that by the above claim we have that 
\[A_E:=\left\{(\gamma', \tilde{W}(\gamma',z), \tilde{\eta}(F, \gamma',z)): \gamma' \in E\right\}\] is relatively compact in $C([0,\infty);\C)\times C([0,\infty);\R)\times C([0,\infty);\R)$ , and its closure is equal to 
\[\left\{(\gamma', \tilde{W}(\gamma',z),\tilde{\eta}(F, \gamma',z)): \gamma' \in \overline{E}\right\}.\]
This means that the joint laws of $(\gamma^{(n_k)}, \tilde{W}(\gamma^{(n_k)},z),\tilde{\eta}(F,\gamma^{(n_k)},z))$ are also tight, and thus we can extract an even further subsequence along which we have joint convergence. If $\PP^*$ is the law of this joint limit then, 
\[ \PP^*\left(\overline{A_E}\right)\geq \inf_k \PP\left( \gamma^{(n_k)} \in \overline{E} \right) \geq 1- \frac{1}{M} \] and so we see that the 
probability of our marginal laws agreeing in the sense we want must be greater than $1-\frac{1}{M}$. Since this holds for every $M$, agreement must hold almost surely, and as these marginal laws are equal to the limiting laws of the individually convergent sequences, the result follows. 
 
\end{proof}

\section{Proof of Theorems \ref{thm::gff_levelline_determination} to \ref{thm::reversibility}}
\label{sec::reversibilityetc}
\begin{proof}[Proof of Theorem \ref{thm::monotonicity}]
Suppose that $\gamma_F$ and $\gamma_G$ are continuous transient curves from $0$ to $\infty$ in $\HH$, coupled with a zero-boundary GFF $h$ as level lines of $h+F$ and $h+G$ respectively. Suppose further that $\gamma_G'$ is a continuous transient curve from $\infty$ to $0$ and is coupled with $h$ as a level line of $-h-G$ from $\infty$ to $0$, such that the four objects $h, \gamma_F, \gamma_G, \gamma_G'$ are coupled with $\gamma_F, \gamma_G, \gamma_G'$ are conditionally independent given $h$. 
From Theorem \ref{thm::gff_levelline_coupling}, we have the existence of $\gamma_F, \gamma_G$ and $\gamma_G'$. By Lemma \ref{lem::mono_continuitytransience}, we know that $\gamma_F$ stays to the left of $\gamma_G$ almost surely.
\end{proof}

\begin{proof}[Proof of Theorems \ref{thm::gff_levelline_determination} and \ref{thm::reversibility}]

Suppose that $\gamma_F$ is a continuous transient curve which is coupled with $h$ as a level line of $h+F$ from $0$ to $\infty$, as in Theorem \ref{thm::gff_levelline_coupling}. Let $\gamma_F'$ be a continuous curve coupled with $h$ as a level line of $-h-F$ from $\infty$ to $0$, such that $\gamma_F$ and $\gamma_F'$ are conditionally independent given $h$. The existence of $\gamma_F'$ is given by Theorem \ref{thm::gff_levelline_coupling}. Lemma \ref{lem::reversibility_aux} then tells us that $\gamma_F=\gamma_F'$ almost surely. In particular, $\gamma_F$ is almost surely determined by $h$. 
\end{proof}

\begin{remark} \label{rmk::lawunique}
By applying Theorem \ref{thm::gff_levelline_determination}, we see that if $\gamma$ is the weak limit of any sequence of level lines as in Proposition \ref{propn::weaklimit}, then $\gamma$ can be coupled as the level line of a $\GFF$ and is moreover determined by the $\GFF$ in this coupling. Thus, the law of $\gamma$ is uniquely determined. In particular, it does not depend on the sequence of approximating level lines. 
\end{remark}

\begin{lemma}\label{lem::mono_cvg}
Let $F$ be as in Theorem \ref{thm::gff_levelline_coupling}. Suppose that $F_n\downarrow F$ approximate $F$ uniformly on the real line, where the $F_n$ are decreasing, and are piecewise constant with value changing only finitely many times. 

Let $h$ be a zero boundary  $\GFF$ in $\HH$, $\gamma_n$ be the level line of $h+F_n$ for each $n$, and $\gamma$ be the level line of $h+F$. Denote by 
$H_n$ the open sets corresponding to the strict right hand sides of $\gamma_n$.  By monotonicity these are almost surely decreasing. Define 
\[H= \cap_n  \overline{H_n}.\]
Then $\partial H$ coincides with $\gamma$ almost surely. In other words, the sequence of curves $\gamma_n$ converges to $\gamma$ almost surely.
\end{lemma}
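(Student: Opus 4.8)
The plan is to show the two inclusions $\gamma \subseteq \partial H$ and $\partial H \subseteq \gamma$ almost surely, exploiting monotonicity (Theorem~\ref{thm::monotonicity}, equivalently Lemma~\ref{lem::mono_continuitytransience}) together with the reversibility/determination results of Section~\ref{sec::monotonicity}. Since $F_n \downarrow F$ and each $F_n$ satisfies Condition~(\ref{eqn::inequalities}) (for $n$ large enough, after possibly discarding finitely many terms, using that $F$ satisfies the strict inequalities with some $c>0$ and the convergence is uniform), monotonicity gives that $\gamma_n$ lies to the left of $\gamma$ and that $\gamma_{n+1}$ lies to the left of $\gamma_n$, almost surely; hence the right-hand domains $H_n$ are decreasing and $\overline{H_n} \supseteq \overline{H}$ where $H$ is the domain strictly to the right of $\gamma$. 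This immediately yields $H \subseteq \bigcap_n \overline{H_n}$, and since each $\gamma_n$ is also to the left of $\gamma$, we get $\gamma \subseteq \bigcap_n \overline{H_n} = (\text{the set I called }H\text{ in the statement})$; the real content is that $\partial H$ (boundary of the intersection) is contained in $\gamma$.

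First I would set up the comparison carefully: fix a conformal map $\varphi$ from $\HH$ to the unit disc and work with Hausdorff distance there, so that ``$\gamma_n \to \gamma$'' is a genuine compactness statement. By monotonicity the closed sets $\overline{\varphi(H_n)}$ decrease to $\bigcap_n \overline{\varphi(H_n)}$, so it suffices to show this intersection equals $\overline{\varphi(H)}$, equivalently that no point strictly to the right of $\gamma$ survives in all the $\overline{H_n}$. Equivalently: for any point $z$ strictly to the right of $\gamma$ (in a component of $\HH \setminus \gamma$), $z \notin H_n$ for $n$ large, i.e. $z$ eventually lies strictly to the left of $\gamma_n$, or equivalently $\gamma_n$ eventually separates $z$ from the right boundary. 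The natural way to capture ``strictly to the right of $\gamma$'' is via a level line of the reversed field: let $\gamma'$ be the level line of $-h-F$ from $\infty$ to $0$; by Theorem~\ref{thm::reversibility} (or Lemma~\ref{lem::reversibility_aux}) $\gamma' = \gamma$ as sets, and similarly we may introduce $\gamma_n'$, the level line of $-h-F_n$ from $\infty$ to $0$, with $\gamma_n' = \gamma_n$. Now apply Lemma~\ref{lem::mono_aux}/Lemma~\ref{lem::mono_continuitytransience} with the roles reversed and boundary data $-F_n \le -F$ appropriately: since $F_n \ge F$, we have $-F_n \le -F$, so $\gamma'$ (the level line of $-h-F$) stays to the left of $\gamma_n'$ (the level line of $-h-F_n$), viewed from $\infty$; translating back this says $\gamma_n$ stays to the left of $\gamma$ — consistent with the above — but more importantly it gives a \emph{two-sided} squeeze once we also use that $\gamma$ is the level line of $h+F$ and $\gamma_n$ the level line of $h+F_n$ with $F_n \to F$ uniformly.

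The main obstacle — and the step I would spend the most care on — is upgrading the one-sided monotone convergence of the domains to the statement that $\partial H = \gamma$, i.e. ruling out that the decreasing limit $\bigcap_n \overline{H_n}$ is strictly larger than $\overline{H}$ (has nonempty interior beyond $\overline{H}$, or that its boundary is bigger than $\gamma$). The clean way to do this is to invoke the weak-convergence machinery already established: by Proposition~\ref{propn::weaklimit} and Remark~\ref{rmk::lawunique}, the sequence $\gamma_n$ converges \emph{in law} to $\gamma$ (the level line of $h+F$), and by Theorem~\ref{thm::gff_levelline_determination} both $\gamma$ and every $\gamma_n$ are measurable functions of the \emph{same} field $h$. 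Monotone sequences that converge in distribution to a limit which is, a.s., both an upper and lower almost-sure bound must converge almost surely: concretely, since $\gamma_n$ decreases (from the right) to the closed set $\partial H$ which contains $\gamma$, and $\gamma_n \to \gamma$ in distribution (hence along a subsequence a.s. in Hausdorff distance after mapping to the disc, by Skorokhod), the a.s. Hausdorff limit of $\gamma_n$ is a subset of $\partial H$ but also has the law of $\gamma$; combined with $\gamma \subseteq \partial H$ and the fact that $\partial H$ is $\sigma(h)$-measurable, we conclude $\partial H = \gamma$ a.s. I would present this as: (i) a.s. the Hausdorff limit $L := \lim_n \varphi(\gamma_n)$ exists by monotonicity of $\overline{H_n}$ and equals $\partial(\varphi(H))$; (ii) $\varphi(\gamma) \subseteq L$ by Theorem~\ref{thm::monotonicity}; (iii) $L \overset{d}{=} \varphi(\gamma)$ by Proposition~\ref{propn::weaklimit} and uniqueness of the limiting law; (iv) since $\varphi(\gamma) \subseteq L$, $L \setminus \varphi(\gamma)$ is a.s. a (relatively) closed set which has probability zero of being nonempty because any nonemptiness would force $L$ to strictly contain a set of the same law — made precise by testing against a countable basis of balls and using that $\gamma$ has empty interior. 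Hence $L = \varphi(\gamma)$ a.s., which is the claim. The one genuinely delicate point inside (iv) is justifying that a closed set containing an a.s.-equal-in-law copy of itself, with the copy having empty interior, must coincide with it; I would handle this by the two-sided reversibility squeeze described above (bounding $\gamma$ from the right by the decreasing limit and from the left, after reversal, by an \emph{increasing} sequence of reversed level lines of $-h-F_n$, which also converges to $\gamma$), so that $\gamma$ is sandwiched between two monotone sequences with the same almost-sure limit.
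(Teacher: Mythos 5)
Your proposal follows essentially the same route as the paper: use monotonicity to get the a.s.\ decreasing Hausdorff limit $\partial H$ lying to the (weak) left of $\gamma$, identify its law as that of the level line of $h+F$ via Proposition \ref{propn::weaklimit} together with the uniqueness coming from Theorem \ref{thm::gff_levelline_determination} (Remark \ref{rmk::lawunique}), and then conclude a.s.\ equality from ``equal in law plus a.s.\ ordered'' (which your countable-basis-of-balls hitting argument makes precise, and which the paper leaves implicit). The only superfluous element is the proposed ``two-sided reversibility squeeze'': since $\gamma_n'$ coincides with $\gamma_n$ as a set by reversibility, it gives no second monotone approximation from the other side, but your main argument does not need it.
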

\begin{proof}
First, we show that $\partial H$ has the same law as $\gamma_F$.
We use a conformal mapping to take everything to the unit disc, as it will be more convenient to work in a space where our sets are compact. We endow $\HH$ with the metric it inherits from the unit disc $\U$ via the map $\varphi(z)=(z-i)/(z+i)$. Namely, let $d_*(\cdot, \cdot)$ denote the metric on $\HH$ given by 
\[d_*(z,w)=|\varphi(z)-\varphi(w)|.\] 
We write $\overline{\HH}$ for the completion of $\HH$ with respect to $d_*$. 
For compact sets $A, B\subset\overline{\HH}$, we have the $d_*$-induced Hausdorff distance 
\[d_*^H(A, B)=\inf\{\eps>0: A\subset B^{(\eps)}, B\subset A^{(\eps)}\},\]
where $A^{(\eps)}$ denotes the open $\eps$-neighborhood of $A$ with respect to the metric $d_*$. Note that $d_*^H$ makes the set of all non-empty compact subsets of $\overline{\HH}$ (with metric $d_*$) into a compact metric space. 
We have the following observations.
\begin{itemize}
\item The sets $\overline{ H_n}$ form an almost surely decreasing sequence of compact subsets of $\overline{\HH}$, which therefore converge to $H$ with respect to $d_*^H$. This implies that $\gamma_n=\partial H_n$ almost surely converges to $\partial H$ with respect to $d_*^H$.
\item By the assumptions on $F_n$, we know that the laws of $\gamma_n$ fall in to the framework of Proposition \ref{propn::weaklimit}. This means that we can extract a subsequence which converges weakly in the space of continuous functions on $[0,\infty)$ with respect to uniform convergence on compacts. Moreover, the limiting curve can be coupled with a zero boundary $\GFF \, \tilde{h}$ as the level line of $\tilde{h}+F$. Furthermore, the subsequence converges weakly, to the same limit, in the space of curves from $[0,1]\to \overline{\HH}$ with respect to the topology of uniform convergence modulo reparameterisation, where the metric on $\overline{\HH}$ is given by $d_*$. This requires a slight extension of Proposition \ref{propn::convergenceofcurves}, which was stated here, but is nonetheless still true, by the extended version given in \cite[Corollary 1.7]{ksrc}. By continuity, we therefore have that along this subsequence the curves also converge weakly to the same limit with respect to $d_*^{H}$. Thus $\partial H$ has the law of a continuous curve which can be coupled with a zero boundary $\GFF \, \tilde{h}$ as the level line of $\tilde{h}+F$.
\item By Theorem \ref{thm::gff_levelline_determination}, we know that the law on continuous curves which can be coupled with a $\GFF \tilde{h}$ as a level line of $\tilde{h}+F$, is unique.
\end{itemize}
Combining these three facts, we may conclude that $\partial H$ has the same law as $\gamma_F$. 
\medbreak
Next, we show that $\partial H$ coincides with $\gamma$ almost surely. We have the following observations.
\begin{itemize}
\item By the above analysis, we know that $\partial H$ has the same law as $\gamma$.
\item By Theorem \ref{thm::monotonicity}, we know that $\partial H$ lies to the left of $\gamma$ almost surely. \end{itemize}
Combining these two facts, we obtain that $\partial H$ coincides with $\gamma$ almost surely.
\end{proof}

\section{Proof of Theorem \ref{thm::sle4rho_existence}  and concluding remarks}
\label{sec::sle4rho}
In this section, we prove Theorem \ref{thm::sle4rho_existence}: the key ingredient being the proof of Lemma \ref{lem::sle4_martingalecharacterisation}. This lemma is proved in \cite{msig1, wwll1} for $\SLE_4(\rho)$ process when $\rho$ is a vector. The proof given in these papers will work with minor modifications for the case when $\rho$ is a Radon measure but, to be self-contained, we still give a complete proof here.

\begin{lemma}\label{lem::sle4_martingalecharacterisation}
Suppose we are given a random continuous curve in $\overline{\HH}$ from 0 to $\infty$ whose Loewner driving function $W$ is almost surely continuous. If $(\rho^L; \rho^R)$ are a pair of finite Radon measures on $\R_-,\R_+$ and $F$ is the corresponding function of bounded variation, define $(\eta_t, t\geq 0)$ as in Definition \ref{def::gff_levelline}. For $z\in\HH$ and $t\ge 0$, define 
\[\tau(t)=\inf\{s: \log\CR(z, \HH)-\log\CR(z, \HH\setminus K_s)=t\}.\]
Then $\left(W, (V^L(x))_{x\in \R_-}, (V^R(x))_{x\in \R_+}\right)$ can be coupled with a standard Brownian motion to describe an $\SLE_4(\rho^L; \rho^R)$ process if $(\eta_{\tau(t)}(z), t\ge 0)$ evolves as a 
Brownian motion with respect to the filtration generated by $(W_{\tau(t)}, t\ge 0)$ for any $z\in \HH$.
\end{lemma}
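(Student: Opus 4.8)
The statement is a converse characterization: if the reparameterized process $(\eta_{\tau(t)}(z),t\ge 0)$ is a Brownian motion with respect to the filtration of $(W_{\tau(t)},t\ge 0)$ for every $z\in\HH$, then the driving triple describes an $\SLE_4(\rho^L;\rho^R)$ process in the sense of Definition \ref{defn::slekapparho}. The approach is to run exactly the argument already carried out in the proof of Lemma \ref{lem::coupling_iff_martingale}: from the Brownian-motion hypothesis, extract a Brownian motion $(B_t,t\ge 0)$ (adapted to the filtration of $W$) and a bounded-variation process $(V_t,t\ge 0)$, not depending on $z$, such that $W_t=2B_t-V_t$ and $d\eta_t(z)=\Im(2/f_t(z))\,dB_t$, $dU_t(z)=\Im(1/f_t(z))\,dV_t$, where $U_t(z)=\eta_t(z)+\arg f_t(z)$ as in \eqref{eqn::eta_z_bv}. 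The only new point relative to Lemma \ref{lem::coupling_iff_martingale} is to then unpack the bounded-variation part $V_t$ explicitly in terms of the Radon measures $(\rho^L;\rho^R)$, and thereby recover the SDE \eqref{wtsde}.

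First I would recall that $\lambda=\pi/2$ and that, by Definition \ref{def::gff_levelline}, the harmonic function $\eta_t$ has boundary values $F(f_t^{-1}(\cdot))$ on the images of $\R\setminus K_t$, and $\pm\lambda$ on the two sides of $K_t$. Hence $U_t=\eta_t+\arg f_t$ is the bounded harmonic function on $\HH\setminus K_t$ whose boundary values, after applying $f_t$, equal $F+2\lambda$ on the left part of $\R$, $\lambda$ along $\partial K_t$, and $F$ on the right part. Writing $\hat U_t:=U_t\circ f_t^{-1}$ (a harmonic function on $\HH$), the Poisson integral representation gives, for $x=\Re f_t(z)$, $y=\Im f_t(z)>0$,
\[
U_t(z)=\frac1\pi\int_\R \frac{y}{(u-x)^2+y^2}\,\hat U_t(u)\,du.
\]
Using the correspondence \eqref{eqn::frho} between $F$ and $(\rho^L;\rho^R)$ — i.e. $F(u)=\lambda(1+\rho^R([0,f_t^{-1}(u)\wedge\cdots]))$ on the right, with the analogous left formula — together with $\hat U_t=F\circ f_t^{-1}+2\lambda\cdot\mathbf 1_{\text{left}}+\lambda\cdot\mathbf 1_{\partial K_t}$, one performs an integration by parts in $u$ to re-express $U_t(z)$ as a harmonic measure integral against the measures $\rho^L,\rho^R$, transported to $\HH\setminus K_t$ by $g_t$. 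Concretely this yields $dU_t(z)=\Im(1/f_t(z))\,dV_t$ with
\[
dV_t=2\,dt\Bigl(\int_{\R_-}\frac{\rho^L(dx)}{W_t-V_t^L(x)}+\int_{\R_+}\frac{\rho^R(dx)}{W_t-V_t^R(x)}\Bigr)^{?}
\]
— more precisely, matching the $\Im(1/f_t(z))$ prefactor on both sides forces
\[
dV_t=\Bigl(\int_{\R_-}\frac{\rho^L(dx)}{W_t-V_t^L(x)}+\int_{\R_+}\frac{\rho^R(dx)}{W_t-V_t^R(x)}\Bigr)\,\frac{dt}{?}
\]
after normalizing constants so that $W_t=2B_t-V_t$ gives $dW_t=2\,dB_t-dV_t$ with the drift as in \eqref{wtsde} up to the factor $\sqrt\kappa=2$ and the time normalization $d\langle W\rangle_t=4\,dt$. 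I would carry out this identification carefully, checking the constants against the known piecewise-constant case (where $\rho^L,\rho^R$ are atomic, $V_t$ is a finite sum $\sum_i \rho_i/(W_t-V_t^{L,R}(x_i))\,dt$, and the formula reduces to \eqref{wtsde}); this calibration pins down all constants.

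Once $W_t=2B_t-V_t$ with $dV_t$ given by the integral against $(\rho^L;\rho^R)$ is established, verifying the two conditions of Definition \ref{defn::slekapparho} is routine: condition (1), the SDEs \eqref{wtsde}–\eqref{vtxsde}, holds on intervals where $W_t$ avoids the $V_t^{L,R}(x)$ by construction (the Loewner equation gives \eqref{vtxsde} automatically, and $\eqref{wtsde}$ is the decomposition just derived with $\sqrt\kappa\,dB_t=2\,dB_t$); condition (2), instantaneous reflection — i.e. that $\{t: W_t=V_t^{L,R}(x)\text{ for some }x\}$ has zero Lebesgue measure — follows from the fact that $K$ is generated by a continuous curve with continuous driving function, using Lemma \ref{lem::results_piecewiseconstant}-type reasoning or directly the regularity of $W$: the set of times the curve is on the boundary has measure zero. \textbf{The main obstacle} I expect is precisely the integration-by-parts step that rewrites the Poisson integral of $F$ as a harmonic-measure integral against the measures $\rho^L,\rho^R$ transported by $g_t$, and getting every constant (the $2$, the $\lambda$, the $\pi$) exactly right so that the drift matches \eqref{wtsde} on the nose rather than up to an unidentified factor; cross-checking against the atomic case is the safeguard. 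A secondary subtlety is that $V_t$ was produced abstractly as "some bounded-variation process not depending on $z$" — one must argue that its Radon–Nikodym derivative against $dt$ is forced to be the stated integral functional of $(W_t,V_t^L,V_t^R)$, which again comes from matching the $z$-dependence $\Im(1/f_t(z))$ on both sides of $dU_t(z)=\Im(1/f_t(z))\,dV_t$ for a dense set of $z$, since a measure on $\R$ is determined by its Poisson integrals.
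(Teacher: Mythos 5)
This is essentially the paper's own argument: the paper likewise reduces to showing that $W$ is a semimartingale with $d\langle W\rangle_t=4\,dt$ and then identifies the finite-variation part of the drift by writing $2\eta_t(z)$, via integration by parts for BV functions, as $-\int_{\R_-}\arg(g_t(z)-V_t^L(x))\,\rho^L(dx)-2\arg(f_t(z))+\pi+\int_{\R_+}\bigl(\pi-\arg(g_t(z)-V_t^R(x))\bigr)\rho^R(dx)$, differentiating under the integral (legitimate since $\rho^L,\rho^R$ are finite), setting the drift of this local martingale to zero, and disposing of instantaneous reflection exactly as you do. The constant-matching you flag as the main obstacle does close: since $d\arg(g_t(z)-V_t^{L,R}(x))=\Im(1/f_t(z))\cdot\frac{2\,dt}{W_t-V_t^{L,R}(x)}$, matching against $dU_t(z)=\Im(1/f_t(z))\,dV_t$ forces $dV_t=-\bigl(\int_{\R_-}\frac{\rho^L(dx)}{W_t-V_t^L(x)}+\int_{\R_+}\frac{\rho^R(dx)}{W_t-V_t^R(x)}\bigr)dt$, so $dW_t=2\,dB_t-dV_t$ is precisely (\ref{wtsde}) with $\sqrt{\kappa}=2$.
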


\begin{proof}
Suppose that $(\eta_{\tau(t)}(z), t\ge 0)$ is a Brownian motion with respect to the filtration generated by $(W_{\tau(t)}, t\ge 0)$ for each $z\in \HH$. This implies that $(\eta_t(z), t\ge 0)$ is a local martingale with respect to the filtration generated by $(W_t, t\ge 0)$. Our first step will be to show that $W_t$ is a continuous semi-martingale. 
By the definition of $\eta_t(\cdot)$, we know that, for each $z\in\HH$,
\begin{align}
\label{eqn::etarep}
2\eta_t(z) = &  - \int_{\R_-} \arg(g_t(z)-V_t^L(x)) \, \rho^L(dx) - \arg(g_t(z)-W_t) \nonumber \\ 
& +  \left(\pi - \arg(g_t(z)-W_t)\right) + \int_{\R_+} \left( \pi - \arg(g_t(z)-V_t^R(x)) \right) \, \rho^R(dx).
\end{align}
This follows from the integration by parts formula for functions of bounded variation, and the integral expression for the harmonic extension of a bounded function on the real line. Note here that the integrals are well defined, since for each fixed $t,z$ the integrands are continuous, bounded functions in $x$, and $\rho^L, \rho^R$ are assumed to be finite measures. 
Indeed, $g_t(z)$ and $(V_t^{L,R}(x))_{x\in\R}$  are adapted and differentiable, and we may also differentiate under the integral in (\ref{eqn::etarep}) by finiteness of $\rho^L, \rho^R$. Therefore, we can deduce that all the terms in (\ref{eqn::etarep}) apart from the only one, $\arg(g_t(z)-W_t)$, involving $W_t$, are semi-martingales. Since $\eta_t(z)$ is itself a local martingale, this means that $\arg(g_t(z)-W_t)$ must also be a semi-martingale. Now, note that by Schwartz's formula, we can write $\log(g_t(z)-W_t)$, up to a constant, as a linear functional (an integral against a test function) of $\arg(g_t(z)-W_t)$. So $\log(g_t(z)-W_t)$ is also a semi-martingale, and thus it's exponential, and consequently $W_t$ itself, must be a semi-martingale also.
Hence we can write $W_t:=M_t-V_t$
for $M$ a local martingale and $V$ of bounded variation. 

Substituting this into the expression (\ref{eqn::etarep}) we see that, on intervals where $W_t$ does not collide with the $V_t^{L,R}$, the drift of $2\eta_t$ is equal to the imaginary part of 
\[ \frac{2 \int_{\R_-} \rho^L(dx)/\left(V_t^L(x)-W_t\right) }{g_t(z)-W_t} dt + \frac{-2dV_t}{(g_t(z)-W_t)}+ \frac{d\langle W_t \rangle - 4 dt}{(g_t(z)-W_t)^2}+ \frac{2 \int_{\R_+} \rho^R(dx)/\left(V_t^R(x)-W_t\right)}{g_t(z)-W_t} dt, \]
which of course must vanish. Therefore, multiplying by $(g_t(z)-W_t)^2$ and evaluating at $z$ such that $g_t(z)-W_t$ is arbitrarily close to 0, we can deduce that $d\langle W_t \rangle =4dt$. On subsequently removing the third term, we also find an expression for $dV_t$, and can conclude that $W_t$ satisfies (\ref{wtsde}) in Definition \ref{defn::slekapparho} on intervals where $W_t$ does not collide with the $V_t^{L,R}$.

 All that remains is to show that we have instantaneous reflection of $W_t$ off the $V_t^{L,R}(x)$. It suffices to show that the number of times the curve $\gamma$ hits the real line has Lebesgue measure 0. However, this is always the case for a continuous curve with continuous driving function, which we know for example by \cite[Lemma 2.5]{msig1}. 

\end{proof}

\begin{remark}
 We believe that Lemma \ref{lem::sle4_martingalecharacterisation} could be made into an if and only if statement if we strengthened Definition \ref{defn::slekapparho} of an $SLE_\kappa(\rho^L; \rho^R)$ process to also require that, almost surely, 
 \begin{equation}\label{eqn::forcepoints_nopush}
 V_t^L(x)= x + \int_0^t \frac{2 ds}{V_s^L(x) - W_s},\quad x\in\R_-;\quad 
 V_t^R(x)= x + \int_0^t \frac{2 ds}{V_s^R(x) - W_s},\quad x\in\R_+ \; \;\; \text{and}
 \end{equation}
 \begin{equation}\label{eqn::driving_nopush}
 W_t=\sqrt{\kappa}B_t+\int_0^t ds \int_{\R_-} \frac{\rho^L(dx)}{W_s-V_s^L(x)}+\int_0^tds \int_{\R_+} \frac{\rho^R(dx)}{W_s-V_s^R(x)},
 \end{equation}
as in \cite{wwll1} and \cite{msig1}.
 
 That is, using the stronger definition, we could show that any such process can always be coupled with the Gaussian Free Field as generalized level line. This would also give us uniqueness in law for the $\SLE_4(\rho^L;\rho^R)$ process among continuous curves. However, it seems that (\ref{eqn::forcepoints_nopush}) and (\ref{eqn::driving_nopush}) are hard to verify assuming only that $(\eta_{\tau(t)}(z), t\geq 0)$ evolves as a Brownian motion. 
\end{remark}

\begin{proof}
[Proof of Theorem \ref{thm::sle4rho_existence}.] Combining Theorem \ref{thm::gff_levelline_coupling} with Lemmas  \ref{lem::coupling_iff_martingale} and \ref{lem::sle4_martingalecharacterisation} in the case that $F$ is of bounded variation, we know that in the coupling $(h,\gamma)$ given by Theorem \ref{thm::gff_levelline_coupling}, the marginal law of $\gamma$ is that of an $\SLE_4(\rho^L;\rho^R)$ process. This gives us existence of the process. Moreover, we know the curve $\gamma$ is almost surely continuous and transient and also satisfies the reversibility property (3) of Theorem \ref{thm::sle4rho_existence}, by Theorem \ref{thm::reversibility}.

\end{proof}

\begin{remark}
We can also generalize the construction of flow lines and counterflow lines to $\GFF$ with general boundary data. 
A similar approximating idea works for flow lines and counterflow lines. Since the flow lines and counterflow lines have a \textit{duality property}, instead of reversibility as for level line case, some  extra work is neeeded for the proof of monotonicity as in Section \ref{sec::mono}. The details are left to interested readers.
\end{remark}
\begin{remark}
As explained in the introduction, we restrict to boundary values satisfying Condition (\ref{eqn::inequalities}) throughout the paper. This condition guarantees that there is no continuation threshold. The continuity of the level lines when there does exist a continuation threshold is still open. 
\end{remark}

\bibliographystyle{alpha}
\bibliography{gff_bibliography}
\bigbreak
\noindent Ellen Powell\\
\noindent Department of Pure Mathematics and Mathematical Statistics\\
\noindent University of Cambridge, Cambridge, England\\
\noindent ep361@cam.ac.uk
\bigbreak
\noindent Hao Wu\\
\noindent NCCR/SwissMAP, Universit\'{e} de Gen\`{e}ve, Switzerland\\
\noindent \textit{and }Yau Mathematical Sciences Center, Tsinghua University, China\\
\noindent hao.wu.proba@gmail.com

\end{document}